\newtheorem{theorem}{Theorem}
\newtheorem{lemma}[theorem]{Lemma}
\newtheorem{definition}{Definition}
\newtheorem{corollary}[theorem]{Corollary}
\newtheorem{proposition}[theorem]{Proposition}
\newtheorem{remark}{Remark}
\newcommand{\myscale}{0.32}
\newcommand{\ds}{\displaystyle}
\newcommand{\R}{\mathbb{R}}
\newcommand{\tr}{\textrm{tr}}
\algnewcommand{\Input}[1]{%
  \State \textbf{Input:} {\raggedright #1}
  
}
\algnewcommand{\Initialize}[1]{%
  \State \textbf{Initialize:}
  \Statex \hspace*{\algorithmicindent}\parbox[t]{.8\linewidth}{\raggedright #1}
}
\algnewcommand{\Output}[1]{%
  \State \textbf{Output:} {\raggedright #1}
}
\begin{document}
\title{On the inexact scaled gradient projection method}
\author{
O. P. Ferreira\thanks{This author was  partially supported by FAPEG, CNPq Grants  302473/2017-3 and 408151/2016-1) and FAPEG/PRONEM- 201710267000532  (e-mail:{\tt
orizon@ufg.br}).}
\and
M. V. Lemes \thanks{ (e-mail:{\tt max@ufg.br}).}
\and
L. F. Prudente\thanks{This author was  partially supported by  FAPEG/PRONEM- 201710267000532 (e-mail:{\tt lfprudente@ufg.br}).  }
}

\maketitle

\maketitle
\begin{abstract}
	The purpose of this  paper is to present an inexact version of the scaled gradient projection method on a convex set, which  is inexact in two sense.  First, an inexact projection on the feasible set is computed, allowing for an appropriate relative error tolerance. Second, an  inexact non-monotone line search scheme is employed to compute  a step size which defines the next iteration. It is shown that the proposed   method has similar  asymptotic convergence properties and iteration-complexity bounds as the usual  scaled gradient projection method employing monotone line searches.
\end{abstract}

\noindent
{\bf Keywords:} Scaled gradient projection method,  feasible inexact projection,  constrained convex optimization.

\medskip
\noindent
{\bf AMS subject classification:}  49J52, 49M15, 65H10, 90C30.

\section{Introduction}
This  paper is  devoted  to the study of  the {\it scaled gradient projection (SGP) method with non-monotone line search} to solve general  constrained convex optimization problems as follows
\begin{equation} \label{eq:OptP}
	\min \{ f(x) :~   x\in C\},
\end{equation}
where $C$ is a closed and convex subset of $\mathbb{R}^n$ and $f:\mathbb{R}^n \to \mathbb{R}$ is a continuously differentiable function. Denotes by $f^*:= \inf_{x\in C} f(x)$ the optimal value  of \eqref{eq:OptP} and by  $\Omega^*$  its  solution set, which we will assume to be nonempty unless the contrary is explicitly stated.  Problem~\eqref{eq:OptP} is a basic issue of constrained  optimization, which appears very often in various areas, including  finance,    machine learning, control theory, and signal processing, see for example \cite{Bottou_Curtis_Nocedal2018, Boyd_Ghaoui_Ferron1994, Figueiredo2007, Higham2002, Ma_Hu_Gao2015, Sra_Nowozin_Wright2012}.  Recent problems considered in most of these areas, the datasets are large or high-dimensional  and their solutions need to be approximated quickly with a reasonably accuracy. It is well known that SGP method with non-monotone line search is among those that are suitable for this task, as will be explained below.

The  gradient projection method   is what first comes to mind when we start from the ideas of the classic optimization methods in an attempt to deal with problem \eqref{eq:OptP}.  In fact, this  method is one of the oldest known optimization methods to solve \eqref{eq:OptP}, the study of its convergence properties goes back to the works of Goldstein \cite{Goldstein1964} and Levitin and Polyak \cite{Polyak_Levitin1966}.  After these works, several variants of it have appeared over the years, resulting in a vast literature on the subject, including  \cite{yunier_roman2010, Bertsekas1976, Bertsekas1999, Fan_Wang_Yan2019, Figueiredo2007, Gong2011,   Iusem2003, Patrascu_Necoara2018, Zhang_Wang_Yang2019}. Additional reference on this subject  can be found in the recent  review  \cite{bonettini2019recent} and  references therein. Among all the variants of the gradient projection method, the scaled  version has been especially considered due to the flexibility provided in  efficient  implementations of the method; see \cite{BirginMartinezRaydan2003,10.1093/imanum/drh020,Bonettini2016, BonettiniPrato2015, Bonettini2009}.  In addition, its simplicity and easy implementation has attracted the attention of the scientific community that works on optimization over the years.  This method usually uses only first-order derivatives, which makes it very stable from a numerical point of view and therefore quite suitable for solving large-scale optimization problems, see \cite{More1990, Nesterov_Nemirovski2013, Sra_Nowozin_Wright2012, tang_golbabaee_davies2017}. At each current iteration, SGP method  moves along the direction of the negative scaled gradient, and then projects the obtained point  onto the constraint set.  
The current iteration and such projection define a feasible descent direction and a line search in this direction is performed to define the next iteration. It is worth mentioning that the performance of SGM method is strongly related to each of the steps we have just  mentioned. In fact, the scale matrix and the step size towards the negative scaled gradient are freely selected in order to improve the performance  of SGM method but without increasing the cost of each iteration.  Strategies  for choosing both has its origin in the study of  gradient  method  for unconstrained  optimization,   papers addressing  this issues include  but not limited to \cite{BB1988, BonettiniPrato2015, DaiFletcher2006,DaiHage2006, Serafino2018, Friedlander1999, Dai2006,  DaiFletcher2005,  Polyak_Levitin1966}. More details  about about  selecting  step sizes and scale matrices  can be found in the recent  review  \cite{bonettini2019recent} and  references therein.

In this paper, we are particularly interested in the main stages that make up the SGP method, namely, in the projection calculation and in the line search employed.   It is well known that the mostly computational burden of each iteration of the SGP method is in the calculation of the projection.  Indeed, the   projection calculation requires, at each  iteration, the solution of a quadratic problem restricted to the feasible set,  which can lead to a substantial increase in the cost per iteration if the number of unknowns is large. For this reason, it may not be justified to carry out exact projections when the iterates are far from the solution of the problem. In order to reduce the computational effort spent on projections, inexact procedures that become more and more accurate when approaching the solution, have been proposed, resulting in more efficient methods;  see  for exemple   \cite{BirginMartinezRaydan2003, Bonettini2016,Golbabaee_Davies2018, Gonccalves2020, SalzoVilla2012, VillaSalzo2013, Rasch2020}.  On the other hand,  non-monotonous searches can improve the probability of finding an optimal global solution, in addition to potentially improving the speed of convergence of the method as a whole, see for example \cite{Dai2002, Panier1991, Toint1996}. The concept of non-monotone line search,  that we will use here as a synonym for  inexact line search,  have been proposed first in \cite{Grippo1986}, and  later a new non-monotone search was proposed in \cite{ZhangHager2004}.  After these papers  others  non-monotone searches appeared, see for example  \cite{Ahookhosh2012, MoLiuYan2007}.  In \cite{SachsSachs2011}, an interesting general framework for non-monotonous line research was proposed, and more recently modifications of it have been presented in \cite{GrapigliaSachs2017, GrapigliaSachs2020}.

The purpose of the present  paper is to present an inexact version of the SGP method, which  is inexact in two sense. First,  using  a version of  scheme introduced in \cite{BirginMartinezRaydan2003} and also a variation of the one appeared \cite[Example 1]{VillaSalzo2013},  the inexact projection  onto the feasible  set is computed  allowing an appropriate  relative error tolerance. Second,  using the inexact  conceptual scheme for the  line search  introduced  in  \cite{GrapigliaSachs2020, SachsSachs2011}, a step size is computed  to define the next iteration.   More specifically, initially we show that the  feasible inexact  projection of \cite{BirginMartinezRaydan2003} provides greater latitude than the projection of \cite[Example 1]{VillaSalzo2013}.  In the first  convergence result presented, we show that the SGP method using the projection proposed in \cite{BirginMartinezRaydan2003} preserves the same partial convergence result as the classic method, that is, we prove that every accumulation point  of the sequence generated by the SGP method is stationary for problem~\eqref{eq:OptP}. Then, considering the inexact projection of \cite[Example 1]{VillaSalzo2013}, and  under mild  assumptions,  we establish  full asymptotic convergence results  and  some complexity bounds. The  presented analysis of the method is done using the general  non-monotonous line search scheme  introduced in \cite{GrapigliaSachs2020}. In this way, the proposed method can be adapted to several line searches and, in particular, will allow obtaining several known versions of the SGP method as particular instances,  including  \cite{yunier_roman2010,BirginMartinezRaydan2003,Iusem2003,Xihong2018}. Except for the particular case when we assume that the SGP method employs  the non-monotonous line search introduced by \cite{Grippo1986}, all other  asymptotic convergence and complexity  results are obtained without any assumption of compactness of the sub-level sets of the objective function.  Finally, it is worth mentioning that the complexity results obtained  for the SGP method with a general non-monotone line search  are the same as in the classic case when the usual Armijo search is employed, namely,  the complexity bound  $\mathcal{O}(1/\sqrt{k})$ is unveil for finding $\epsilon$-stationary points for problem \eqref{eq:OptP} and, under convexity on $f$, the rate to find a $\epsilon$-optimal functional value is $\mathcal{O}(1/k)$.

In Section~\ref{Sec:Prel}, some notations and basic results used throughout the paper is presented. In particular, Section~\ref{Sec:SubInexProj} is devoted to recall the concept of relative feasible inexact projection and some  new properties about this concept are presented. Section~\ref{Sec:SGM}  describes the SGP method with a general non-monotone line search and some particular instances of it are presented.  Partial asymptotic convergence results  are presented in Section~\ref{Sec:PartialConvRes}. Section~\ref{Sec:FullConvRes}   presents  a full   convergence result  and iteration-complexity bounds. Some numerical experiments are provided in Section \ref{Sec:NumExp}. Finally, some concluding remarks are made in Section~\ref{Sec:Conclusions}.

\section{Preliminaries and basic results}  \label{Sec:Prel}

In this section, we introduce  some notation and results used throughout our presentation.  First we  consider the  index set  ${\mathbb{N}}:=\{0,1,2,\ldots\}$,  the usual inner  product  $\langle \cdot,\cdot \rangle$ in $\mathbb{R}^n$, and the associated Euclidean norm    $\|\cdot\|$.
Let  $f:\mathbb{R}^n \to \mathbb{R}$ be a differentiable function and $C \subseteq \mathbb{R}^n$. The  gradient $\nabla f$ of $f$ is said to be {\it Lipschitz continuous} in $C$ with constant $L>0$ if $\|\nabla f(x)-\nabla f(y)\|\leq L \|x-y\|$, for all~$x, y\in C$. Combining this definition with the fundamental theorem of calculus, we obtain the following result whose proof can be found in \cite[Proposition A.24]{Bertsekas1999}.

\begin{lemma} \label{Le:derivlipsch}
	Let $f:\mathbb{R}^n \to \mathbb{R}$ be a differentiable function and $C \subseteq \mathbb{R}^n$. Assume that $\nabla f$  is Lipschitz continuous in C with constant $L>0$. Then, $f(y) - f(x) - \langle \nabla f(x), y-x \rangle \leq (L/2)\|x-y\|^2$,  for all~ $x, y\in C$.
\end{lemma}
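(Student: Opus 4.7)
The plan is to reduce the inequality to a one-dimensional statement along the segment joining $x$ and $y$ and then invoke the fundamental theorem of calculus together with the Lipschitz hypothesis. Implicitly (as in Bertsekas's statement) we will use that the segment $[x,y]$ lies in $C$, which is automatic in our setting since the feasible set of \eqref{eq:OptP} is convex.

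First, I would fix arbitrary $x,y \in C$ and introduce the auxiliary function $\varphi:[0,1]\to\mathbb{R}$ defined by $\varphi(t) := f(x+t(y-x))$. By the chain rule, $\varphi$ is differentiable with $\varphi'(t) = \langle \nabla f(x+t(y-x)),\, y-x\rangle$. Applying the fundamental theorem of calculus yields
\begin{equation*}
f(y) - f(x) \;=\; \varphi(1) - \varphi(0) \;=\; \int_0^1 \langle \nabla f(x+t(y-x)),\, y-x\rangle\, dt.
\end{equation*}

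Next I would subtract the linear term $\langle \nabla f(x), y-x\rangle = \int_0^1 \langle \nabla f(x), y-x\rangle\, dt$ from both sides to obtain
\begin{equation*}
f(y) - f(x) - \langle \nabla f(x), y-x\rangle \;=\; \int_0^1 \langle \nabla f(x+t(y-x)) - \nabla f(x),\, y-x\rangle\, dt.
\end{equation*}
Then I would bound the integrand by the Cauchy--Schwarz inequality and apply the Lipschitz hypothesis to $\nabla f$ on $C$ (noting $x+t(y-x)\in C$ for $t\in[0,1]$) to get $\|\nabla f(x+t(y-x))-\nabla f(x)\| \leq L t \|y-x\|$. Substituting and evaluating $\int_0^1 t\, dt = 1/2$ would immediately give the desired bound $(L/2)\|x-y\|^2$.

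The argument is entirely routine and there is no real obstacle; the only subtle point is the tacit convexity assumption needed to guarantee that $x+t(y-x)$ stays in $C$ so that the Lipschitz estimate may be invoked pointwise along the segment. Since this lemma is applied in the paper only with the convex feasible set $C$ from \eqref{eq:OptP}, this causes no difficulty, and one could simply cite \cite[Proposition A.24]{Bertsekas1999} as the authors do.
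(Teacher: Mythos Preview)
Your argument is correct and is precisely the standard proof via the fundamental theorem of calculus, Cauchy--Schwarz, and the Lipschitz bound along the segment. The paper itself does not supply a proof but simply refers to \cite[Proposition~A.24]{Bertsekas1999}, which is exactly the argument you have written out; your remark about the tacit convexity assumption on $C$ is also apt.
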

Assume that $C$ is a convex set. The function $f$ is said to be convex on $C$, if $f(y) \geq  f(x) + \langle \nabla f(x), y-x \rangle$, for all $x, y\in C$.
We recall  that a point ${\bar{x}} \in C$ is a {\it stationary point} for problem \eqref{eq:OptP} if
\begin{equation} \label{eq:StatPoint}
	\langle \nabla f({\bar{x}}),  x-{\bar{x}}\rangle \geq 0, \qquad \forall ~ x\in  C.
\end{equation}
Consequently, if $f$ is a convex function  on $C$, then  \eqref{eq:StatPoint} implies that  $\bar{x} \in \Omega^*$.  We end this section with some useful concepts  for the analysis of the sequence generated by the scaled  gradient method, for more details, see \cite{CombettesVu2013}.    For that,  let  $D$ be a $n\times n$ positive definite matrix and $\| \cdot \|_{D} : \mathbb{R}^{n}\rightarrow \mathbb{R}$ be  the norm  defined by
\begin{equation} \label{def:normaD}
	\|d\|_{D}:=\sqrt{\left\langle D d,d\right\rangle},\quad \forall d\in \mathbb{R}^{n}.
\end{equation}
For a fixed  constant $\mu \geq 1$,  {\it denote by  ${\cal D}_{\mu}$  the set of symmetric positive definite matrices $n\times n$ with all eigenvalues contained in the interval $[\frac{1}{\mu}, \mu]$}.  The set ${\cal D}_{\mu}$   is compact. Moreover,  for each $D\in {\cal D}_{\mu}$, it follows that $D^{-1}$ also belongs to $ {\cal D}_{\mu}$. Furthermore,  due to $D\in {\cal D}_{\mu}$,  by \eqref{def:normaD}, we obtain
\begin{equation} \label{eq:pnv}
	\frac{1}{\mu}\|d\|^2\leq \|d\|^2_{D}\leq \mu \|d\|^2, \qquad \forall d\in \mathbb{R}^n.
\end{equation}

\begin{definition} \label{def:QuasiFejer}
	Let $(y^k)_{k\in\mathbb{N}}$ be a sequence in $\mathbb{R}^n$ and   $(D_k)_{k\in\mathbb{N}}$ be  a sequence in ${\cal D}_{\mu}$.  The sequence $(y^k)_{k\in\mathbb{N}}$ is said to be quasi-Fej\'er convergent to a set $W\subset \mathbb{R}^n$ with respect to  $(D_k)_{k\in\mathbb{N}}$ if, for  all $w\in W$, there exists a sequence $(\epsilon_k)_{k\in\mathbb{N}}\subset\mathbb{R}$ such that $\epsilon_k\geq 0$, $\sum_{k\in \mathbb{N}}\epsilon_k<\infty$, and $\|y_{k+1}-w\|_{D_{k+1}}^2\leq \|y^k-w\|_{D_k}^2+\epsilon_k$, for    all $k\in \mathbb{N}$.
\end{definition}

The main property of a quasi-Fej\'er convergent sequence is stated in the next result. Its proof can be found in \cite{CombettesVu2013} but, for sake of completeness, we include it here.

\begin{theorem}\label{teo.qf}
	Let $(y^k)_{k\in\mathbb{N}}$ be a sequence in $\mathbb{R}^n$ and   $(D_k)_{k\in\mathbb{N}}$ be  a sequence in ${\cal D}_{\mu}$.   If $(y^k)_{k\in\mathbb{N}}$ is quasi-Fej\'er convergent to a nomempty set $W\subset  \mathbb{R}^n$ with respect to $(D_k)_{k\in\mathbb{N}}$, then $(y^k)_{k\in\mathbb{N}}$ is bounded. Furthermore, if a cluster point ${\bar y}$ of $(y^k)_{k\in\mathbb{N}}$ belongs to $W$, then $\lim_{k\rightarrow\infty}y^k={\bar y}$.
\end{theorem}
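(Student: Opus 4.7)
The plan is to follow the classical Fejér-monotonicity argument, with the twist that the norms $\|\cdot\|_{D_k}$ vary with $k$; the bound \eqref{eq:pnv} (which is uniform in $k$ because $\mu$ is fixed) is exactly what lets me translate between the weighted norms and the Euclidean norm.

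For boundedness, I fix an arbitrary $w\in W$ and iterate the defining inequality from Definition~\ref{def:QuasiFejer}, obtaining
$$\|y^{k+1}-w\|_{D_{k+1}}^2 \le \|y^0-w\|_{D_0}^2 + \sum_{j=0}^{k}\epsilon_j \le \|y^0-w\|_{D_0}^2 + \sum_{j=0}^{\infty}\epsilon_j =: M < \infty.$$
Applying \eqref{eq:pnv} gives $\|y^{k+1}-w\|^2 \le \mu\, M$ for every $k$, so $(y^k)_{k\in\mathbb{N}}$ is bounded in the Euclidean norm.

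For the convergence statement, suppose $\bar y\in W$ is a cluster point and set $a_k:=\|y^k-\bar y\|_{D_k}^2$. The quasi-Fejér inequality applied with $w=\bar y$ says $a_{k+1}\le a_k+\epsilon_k$. The standard trick is to introduce the summable tail $s_k:=\sum_{j=k}^{\infty}\epsilon_j$, which is well defined and satisfies $s_k\to 0$. Then
$$a_{k+1}+s_{k+1}\le a_k+\epsilon_k+s_{k+1}= a_k+s_k,$$
so $(a_k+s_k)_{k\in\mathbb{N}}$ is nonincreasing and bounded below by $0$; hence it converges, and since $s_k\to0$ the sequence $(a_k)_{k\in\mathbb{N}}$ also converges to some limit $a^\ast\ge 0$.

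It remains to identify $a^\ast=0$, and here the hypothesis that $\bar y$ is a cluster point does the job. Pick a subsequence $y^{k_j}\to\bar y$ in the Euclidean norm; using \eqref{eq:pnv} one more time,
$$0\le a_{k_j}=\|y^{k_j}-\bar y\|_{D_{k_j}}^2\le \mu\,\|y^{k_j}-\bar y\|^2 \to 0,$$
so $a^\ast=0$. Since the full sequence $(a_k)$ converges to $0$, another application of \eqref{eq:pnv}, in the direction $\|y^k-\bar y\|^2\le\mu\,a_k$, yields $y^k\to\bar y$. The one step where care is needed is the monotone-plus-summable-perturbation argument for $a_k$; nothing else is more than a direct use of \eqref{eq:pnv} and the definition of quasi-Fejér convergence.
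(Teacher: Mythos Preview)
Your proof is correct. The boundedness argument is identical to the paper's. For the convergence part, the paper proceeds by a direct $\varepsilon$--$\delta$ argument: given $\delta>0$, it picks $j$ large enough that both the tail $\sum_{\ell\ge k_j}\epsilon_\ell$ and the distance $\|y^{k_j}-\bar y\|$ are small, then iterates the quasi-Fej\'er inequality forward from index $k_j$ and applies \eqref{eq:pnv} twice to bound $\|y^k-\bar y\|^2<\delta$ for all $k\ge k_j$. Your route instead first shows that the full sequence $a_k=\|y^k-\bar y\|_{D_k}^2$ converges (via the tail-sum trick $a_k+s_k$ nonincreasing) and only then identifies the limit as zero using the subsequence. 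The ingredients are the same---iterating the Fej\'er inequality, summability of $(\epsilon_k)$, and the uniform norm comparison \eqref{eq:pnv}---so the difference is organizational rather than substantive; your version has the mild advantage of isolating the convergence of $(a_k)$ as a clean intermediate step, while the paper's version is slightly more self-contained in that it does not invoke the auxiliary sequence $(s_k)$.
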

\begin{proof}
	Take $w\in W$. Definition~\ref{def:QuasiFejer} implies that $\|y^{k}-w\|_{D_{k}}^2\leq \|y^0-w\|_{D_0}^2+\sum_{k\in \mathbb{N}}\epsilon_k<+\infty$, for all $k\in  \mathbb{N}$.  Thus, using the  first  inequality in \eqref{eq:pnv}, we conclude that  $\|y^{k}-w\|\leq \sqrt{\mu} \|y^{k}-w\|_{D_{k}}$, for all $k\in  \mathbb{N}$.  Therefore, combining the two previous inequalities, we conclude that $(y^k)_{k\in\mathbb{N}}$ is bounded.  Let ${\bar y}\in W$ be a cluster point  of $(y^k)_{k\in\mathbb{N}}$ and  $(y^{k_j})_{j\in\mathbb{N}}$ be a subsequence of $(y^k)_{k\in\mathbb{N}}$ such that $\lim_{j\to +\infty} y^{k_j} = {\bar y}$. Take $\delta>0$. Since $\lim_{j\to +\infty} y^{k_j} = {\bar y}$ and $\sum_{j\in \mathbb{N}}\epsilon_k<\infty$,  there exists $j_0$ such that $\sum_{j\geq {j_0}}\epsilon_j<\delta /(2\mu)$ and $j_1>j_0$ such that $\|y^{k_j} - {\bar y}\|\leq \sqrt{\delta/2\mu^2} $, for all $j\geq j_1$. Hence, using  the first  inequality in \eqref{eq:pnv} and taking into account  that $\|y^{k+1}-{\bar y}\|_{D_{k+1}}^2\leq \|y^k-{\bar y}\|_{D_k}^2+\epsilon_k$,  for all $k\in \mathbb{N}$, we   have
	$
		\|y^{k} - {\bar y}\|^2\leq \mu \|y^{k} - {\bar y}\|_{D_k}^2\leq \mu(\|y^{k_j}-{\bar y}\|_{D_{k_j}}^2+\sum_{\ell=k_j}^{k-1}\epsilon_\ell),
	$
	for all $k\geq j_1$. Hence,  using  the second  inequality in \eqref{eq:pnv},  we conclude that $\|y^{k} - {\bar y}\|^2\leq \mu \|y^{k} - {\bar y}\|_{D_k}^2\leq \mu (\mu \|y^{k_j}-{\bar y}\|^2+\sum_{\ell=k_j}^{k-1}\epsilon_\ell)<\mu ( \frac{ \delta}{2\mu} + \frac{  \delta }{2\mu} )=\delta, $ for all  $k\geq j_1$.  Therefore, $\lim_{k\rightarrow\infty}y^k={\bar y}$.
\end{proof}

\subsection{Relative feasible inexact projections} \label{Sec:SubInexProj}

In this section, we recall two concepts  of relative feasible inexact projections onto a closed and convex set, and  also  present  some  new properties of them which will be used throughout the paper. These  concepts  of inexact projections were    introduced seeking to make the subproblem of computing the projections on the feasible  set more efficient;  see for example \cite{BirginMartinezRaydan2003,SalzoVilla2012,VillaSalzo2013}. Before presenting the  inexact projection concept that we will use, let us first recall the concept of exact projection with respect to a given  norm.  For that, {\it throughout this section  $D\in {\cal D}_{\mu}$}. The {\it exact  projection of the point $v\in \mathbb{R}^{n}$ onto $C$ with respect to the norm $\| \cdot \| _{D}$}, denoted by  ${\cal P}_{C}^{D}(v)$, is  defined~by
\begin{equation}\label{eq:exactM}
	{\cal P}_{C}^{D}(v):=\arg \min _{z\in C}\|z-v\|^2_{D}.
\end{equation}
The next result  characterizes  the exact projection, its  proof can be found in  \cite[Theorem 3.14]{BauschkeLivro2014}.

\begin{lemma} \label{pr:cham}
	Let $v, w \in {\mathbb R}^n$.  Then,  $w={\cal P}_{C}^{D}(v)$ if and only if  $w\in C$ and  $\left\langle D(v-w), y-w\right\rangle \leq  0$,   for all $y \in C.$
\end{lemma}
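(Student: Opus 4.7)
The plan is to treat the definition of the projection as a smooth convex minimization problem and use the first-order optimality condition, exploiting the fact that $\phi(z):=\|z-v\|_D^2 = \langle D(z-v),z-v\rangle$ is strictly convex (since $D$ is positive definite) with gradient $\nabla\phi(z) = 2D(z-v)$. Equivalence of $w=\mathcal{P}_C^D(v)$ with the stated variational inequality will fall out of the polarization identity for the $\|\cdot\|_D$-norm.

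For the forward implication, I would assume $w=\mathcal{P}_C^D(v)\in C$, fix any $y\in C$, and use convexity of $C$ to conclude that $w+t(y-w)\in C$ for every $t\in[0,1]$. Since $w$ minimizes $\phi$ over $C$, the scalar function $t\mapsto \phi(w+t(y-w))$ attains its minimum on $[0,1]$ at $t=0$, so its right derivative at $0$ is nonnegative. Computing this derivative yields $\langle \nabla\phi(w), y-w\rangle = 2\langle D(w-v), y-w\rangle \ge 0$, which is exactly $\langle D(v-w), y-w\rangle \le 0$.

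For the reverse implication, I would assume $w\in C$ and $\langle D(v-w),y-w\rangle\le 0$ for every $y\in C$, and apply the polarization identity in the $\|\cdot\|_D$-norm: for any $y\in C$,
\begin{equation*}
\|y-v\|_D^2 \;=\; \|(y-w)-(v-w)\|_D^2 \;=\; \|y-w\|_D^2 - 2\langle D(v-w),y-w\rangle + \|v-w\|_D^2.
\end{equation*}
The cross term is nonnegative by hypothesis and $\|y-w\|_D^2\ge 0$, so $\|y-v\|_D^2 \ge \|v-w\|_D^2$ for every $y\in C$. Hence $w$ solves \eqref{eq:exactM}, and uniqueness of the minimizer (from strict convexity of $\phi$, ensured by $D\succ 0$) gives $w=\mathcal{P}_C^D(v)$.

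There is no substantive obstacle here: the argument is a textbook application of the variational inequality characterization of projections. The only points requiring care are confirming that $\phi$ is indeed differentiable with the claimed gradient (immediate from $D$ being symmetric) and noting that strict convexity, needed to conclude uniqueness in the reverse direction, follows directly from $D\in\mathcal{D}_\mu$ via the lower bound in \eqref{eq:pnv}.
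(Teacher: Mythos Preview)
Your argument is correct and is precisely the standard variational-inequality proof of the projection characterization. The paper does not give its own proof of this lemma but simply cites \cite[Theorem~3.14]{BauschkeLivro2014}, where essentially the same argument you wrote appears; so your approach coincides with the one the paper defers to.
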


\begin{remark} \label{re:cproj}
	It follows from Lemma~\ref{pr:cham}  that $\|{\cal P}_{C}^{D}(v)-{\cal P}_{C}^{D}(u)\|_{D}\leq \|v-u\|_{D}$.  Moreover, since   $D\in {\cal D}_{\mu}$, by \eqref{eq:pnv}, we conclude that   ${\cal P}_{C}^{D}(\cdot)$ is Lipschitz continuous with constant $L=\mu$.   Furthermore,  if  $(D_k)_{k\in\mathbb{N}}\subset {\cal D}_{\mu}$,    $\lim_{k\to +\infty} z^{k} = \bar{z}$, and   $\lim_{k \to +\infty} D_{k} = \bar{D}$, then $\lim_{k \to +\infty}{\cal P}_{C}^{D_k}(z^{k})= {\cal P}_{C}^{\bar D}(\bar{z})$, see   \cite[Proposition~4.2]{CombettesVu2013}.
\end{remark}

In the following, we recall  the  concept of a  feasible inexact projection with respect to $\| \cdot \| _{D}$ relative to a fixed point. 

\begin{definition} \label{def:InexactM}
	The {\it feasible inexact projection mapping, with respect to the norm $\| \cdot \|_{D}$,   onto $C$}  relative to a point  $u \in C$ and forcing parameter $\zeta\in (0, 1]$, denoted by ${\cal P}_{C,\zeta}^{D}(u,  \cdot): {\mathbb R}^n \rightrightarrows C$,  is the set-valued mapping defined as follows
	\begin{equation} \label{eq:Projwm}
		{\cal P}_{C,\zeta}^{D}(u, v) := \left\{w\in C:~ \|w-v\|_{D}^2\leq \zeta \| {\cal P}_{C}^{D}(v)-v\|_{D}^2+(1-\zeta)\|u-v\|_{D}^2 \right\}.
	\end{equation}
	Each point $w\in {\cal P}_{C,\zeta}^{D}(u, v) $ is called a  feasible inexact projection,  with respect to the norm $\| \cdot \|_{D}$,  of $v$ onto $C$ relative to $u$ and forcing parameter $\zeta\in (0, 1]$.
\end{definition}

In the following, we show that the definition given above is nothing more than a reformulation of the concept of  relative feasible inexact projection with respect to $\| \cdot \|_{D}$  introduced in  \cite{BirginMartinezRaydan2003}.
\begin{remark}
	Let $u\in C$, $v\in \mathbb{R}^n$ and  $D$ be   an $n\times n$ positive definite matrix. Consider the quadratic  function $Q: \mathbb{R}^n \to \mathbb{R}$ defined by $Q(z):=(1/2) \left\langle {D}(z-u),z-u\right\rangle +  \left \langle D(u-v), z-u \right\rangle$.
	Thus,  letting  $\| \cdot \|_{D}$  be  the norm  defined by \eqref{def:normaD},  some algebraic manipulations   shows that
	\begin{equation} \label{eq:qppq}
		\|z-v\|^2_{D}= 2Q(z) +\|u-v\|^2_{D}.
	\end{equation}
	Hence,  \eqref{eq:qppq}  and \eqref{eq:exactM}  implies  that    ${\cal P}_{C}^{D}(v)=\arg \min _{z\in C}Q(z).$
	Let $\zeta\in (0, 1]$. Thus, by using \eqref{eq:qppq},  after some calculations,  we can see that  the following inexactness condition  introduced in \cite{BirginMartinezRaydan2003},
	$$
		w\in C, \qquad Q(w)\leq \zeta Q ( {\cal P}_{C}^{D}(v)) ,
	$$
	is  equivalent to find  $w\in C$ such that $\|w-v\|_{D}^2\leq \zeta \| {\cal P}_{C}^{D}(v)-v\|_{D}^2+(1-\zeta)\|u-v\|_{D}^2$,  which corresponds to condition \eqref{eq:Projwm} in Definition~\ref{def:InexactM}.
\end{remark}
The  concept of  feasible inexact projection  in Definition~\ref{def:InexactM}  provides  more latitude to   the usual  concept  of exact projection \eqref{eq:exactM}.  The next   remark makes  this more precise.
\begin{remark}\label{rem: welldef}
	Let $\zeta$ be positive forcing parameter, $C\subset {\mathbb R}^n$ and $u\in C$ be as in Definition~\ref{def:InexactM}.  First of all note that  ${\cal P}_{C}^{D}( v) \in {\cal P}_{C,\zeta}^{D}(u, v)$. Therefore,  ${\cal P}_{C,\zeta}^{D}(u, v)\neq \varnothing$, for all $u\in C$ and $v\in {\mathbb R}^n$. Consequently, the set-valued mapping ${\cal P}_{C,\zeta}^{D}(u,  \cdot)$ as stated in \eqref{eq:Projwm} is well-defined.   Moreover,  for $\zeta=1$, we have ${\cal P}_{C,1}^{D}(u, v)=\{{\cal P}_{C}^{D}(v)\}$.
	In addition, if $\underline{\zeta}$ and $\bar{\zeta}$ are forcing parameters such that $0<\underline{\zeta}\leq \bar{\zeta}\leq 1$, then ${\cal P}_{C,\bar{\zeta}}^{D}(u, v) \subset {\cal P}_{C,\underline{\zeta}}^{D}(u, v)$.
\end{remark}
\begin{lemma} \label{pr:condm}
	Let $v \in {\mathbb R}^n$, $u \in C$ and $w\in {\cal P}_{C,\zeta}^{D}(u, v)$. Then, there hold
	$$
		\left\langle D(v-w), y-w\right\rangle \leq  \frac{1}{2} \|w-y\|_{D}^2 +   \frac{1}{2} \left[\zeta \| {\cal P}_{C}^{D}(v)-v\|_{D}^2+(1-\zeta)\|u-v\|_{D}^2 - \|y-v\|_{D}^2\right] ,   \qquad y \in C.
	$$
\end{lemma}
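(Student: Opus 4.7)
The plan is to derive the inequality directly from the three-point identity for the $D$-inner product together with the defining inequality of the feasible inexact projection set. No optimality condition of the exact projection is needed; the statement is really just an algebraic manipulation.

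First I would expand $\|y-v\|_{D}^{2}$ by writing $y-v = (y-w)+(w-v)$ and using bilinearity and symmetry of $\langle D\,\cdot\,,\cdot\rangle$ to obtain
\[
\|y-v\|_{D}^{2} \;=\; \|y-w\|_{D}^{2} + 2\langle D(y-w),\,w-v\rangle + \|w-v\|_{D}^{2}.
\]
Rearranging and using $\langle D(v-w),y-w\rangle = -\langle D(y-w),w-v\rangle$ (which holds since $D$ is symmetric), this gives the exact identity
\[
2\langle D(v-w),\,y-w\rangle \;=\; \|w-v\|_{D}^{2} + \|y-w\|_{D}^{2} - \|y-v\|_{D}^{2}.
\]

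Second, I would invoke the hypothesis $w\in {\cal P}_{C,\zeta}^{D}(u,v)$, which by Definition~\ref{def:InexactM} means precisely that
\[
\|w-v\|_{D}^{2} \;\leq\; \zeta\,\|{\cal P}_{C}^{D}(v)-v\|_{D}^{2} + (1-\zeta)\,\|u-v\|_{D}^{2}.
\]
Substituting this upper bound for $\|w-v\|_{D}^{2}$ into the identity of the previous step and dividing by $2$ yields exactly the claimed inequality for every $y\in C$.

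There is no real obstacle: the only non-obvious ingredient is recognizing that we should treat $\langle D(v-w),y-w\rangle$ via the $D$-inner-product polarization / three-point identity rather than through Lemma~\ref{pr:cham} (which applies only to the exact projection). Once that substitution is made, the result follows by one line of algebra plus one application of Definition~\ref{def:InexactM}.
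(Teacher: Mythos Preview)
Your proof is correct and follows essentially the same route as the paper: the paper's argument likewise starts from the three-point identity $2\langle D(v-w),y-w\rangle=\|w-y\|_D^2+\|w-v\|_D^2-\|v-y\|_D^2$, then replaces $\|w-v\|_D^2$ by its upper bound from Definition~\ref{def:InexactM} and divides by two. The only cosmetic difference is that you derive the identity explicitly, whereas the paper simply quotes it.
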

\begin{proof}
	Let  $y \in C$. Since   $  2\langle D(v-w), y-w\rangle = \|w-y\|_{D}^2 + \|w-v\|_{D}^2-\|v-y\|_{D}^2$,  using \eqref{eq:Projwm}  we have
	$2\langle D(v-w), y-w\rangle = \|w-y\|_{D}^2 + \zeta \| {\cal P}_{C}^{D}(v)-v\|_{D}^2+(1-\zeta)\|u-v\|_{D}^2-\|v-y\|_{D}^2$, which is equivalent to  the desired inequality.
\end{proof}
Next, we recall a second  concept of relative  feasible inexact projection onto a closed convex set, see  \cite{Ademir_Orizon_Leandro2020, OrizonFabianaGilson2018}.  The definition  is as follows.
\begin{definition} \label{def:InexactProjC}
	The {\it feasible inexact projection mapping, with respect to the norm $\| \cdot \|_{D}$,  onto $C$} relative to $u \in C$ and forcing parameter $\gamma\geq 0$, denoted by ${\cal R}_{C,\gamma}^{D}(u, \cdot): {\mathbb R}^n \rightrightarrows C$,  is the set-valued mapping defined as follows
	\begin{equation} \label{eq:Projw}
		{\cal R}_{C,\gamma}^{D}(u, v):= \left\{w\in C:~\left\langle D(v-w), y-w \right\rangle \leq \gamma \|w-u\|_{D}^2, \quad \forall~ y \in C \right\}.
	\end{equation}
	Each point $w\in {\cal R}_{C,\gamma}^{D}(u, v)$ is called a feasible inexact projection,  with respect to the norm $\| \cdot \|_{D}$,  of $v$ onto $C$ relative to $u$ and forcing parameter $\gamma\geq 0$.
\end{definition}
The concept of  feasible inexact projection  in Definition~\ref{def:InexactProjC} also  provides  more latitude to  the usual concept  of exact projection. Next,  we present some remarks about this concept.
\begin{remark}\label{rem: welldef}
	Let $\gamma\geq 0$ be a forcing parameter, $C\subset {\mathbb R}^n$ and $u\in C$ be as in Definition~\ref{def:InexactProjC}.
	For all $v\in {\mathbb R}^n$, it follows from \eqref{eq:Projw} and Lemma~\ref{pr:cham} that ${\cal R}_{C,0}^{D}(u, v)=\{{\cal P}_{C}^{D}(v)\}$ is the exact projection of $v$ onto $C$. Moreover, ${\cal P}_{C}^{D}(v)\in {\cal R}_{C,\gamma}^{D}(u, v)$ concluding  that ${\cal R}_{C, \gamma}(u, v)\neq \varnothing$, for all $u\in C$ and $v\in {\mathbb R}^n$. Consequently, the set-valued mapping ${\cal R}_{C,\gamma}^{D}(u, \cdot)$ as stated in \eqref{eq:Projw} is well-defined.
\end{remark}
The  next lemma is a variation of \cite[Lemma 6]{Reiner_Orizon_Leandro2019}.  It will allow to relate Definitions~\ref{def:InexactM} and \ref{def:InexactProjC}.
\begin{lemma} \label{pr:cond}
	Let $v \in {\mathbb R}^n$, $u \in C$, $\gamma \geq 0$ and $w\in {\cal R}_{C,\gamma}^{D}(u, v)$. Then, there hold
	$$
		\displaystyle \|w-x\|_{D}^2 \leq \|x-v\|_{D}^2 + \frac{2\gamma}{1-2\gamma}\|u-v\|_{D}^2- \frac{1}{1-2\gamma}\|w-v\|_{D}^2,
	$$
	for all $x \in C$ and $0 \leq \gamma <1/2$.
\end{lemma}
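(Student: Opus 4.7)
The plan is to exploit the defining inequality of $\mathcal{R}_{C,\gamma}^{D}(u,v)$ twice, first with the choice $y = u$ (which is legal since $u \in C$) and then with the generic $y = x \in C$, and to combine the two outputs.

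First I would take $y = u$ in the characterization \eqref{eq:Projw} to get $\langle D(v-w), u-w\rangle \leq \gamma \|w-u\|_D^2$. Using the standard polarization identity
\[
2\langle D(v-w), u-w\rangle = \|v-w\|_D^2 + \|u-w\|_D^2 - \|u-v\|_D^2,
\]
this yields $\|v-w\|_D^2 + (1-2\gamma)\|w-u\|_D^2 \leq \|u-v\|_D^2$. For $0 \leq \gamma < 1/2$ the factor $1-2\gamma$ is positive, so I can rearrange this as
\[
\|w-u\|_D^2 \leq \frac{1}{1-2\gamma}\bigl(\|u-v\|_D^2 - \|w-v\|_D^2\bigr).
\]
This is the key intermediate inequality; producing it is the only non-routine step, and it is what forces the hypothesis $\gamma < 1/2$.

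Next I would take an arbitrary $x \in C$ and apply \eqref{eq:Projw} with $y = x$, again using the polarization identity
\[
2\langle D(v-w), x-w\rangle = \|v-w\|_D^2 + \|x-w\|_D^2 - \|x-v\|_D^2,
\]
to obtain $\|x-w\|_D^2 \leq 2\gamma\|w-u\|_D^2 - \|w-v\|_D^2 + \|x-v\|_D^2$. Plugging in the bound on $\|w-u\|_D^2$ from the first step gives
\[
\|x-w\|_D^2 \leq \|x-v\|_D^2 + \frac{2\gamma}{1-2\gamma}\|u-v\|_D^2 - \left(1 + \frac{2\gamma}{1-2\gamma}\right)\|w-v\|_D^2,
\]
and the coefficient in the last term collapses to $\frac{1}{1-2\gamma}$, which is exactly the claimed inequality.

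I do not expect serious obstacles: the whole proof is an exercise in the polarization identity adapted to the $D$-inner product, and the only subtlety is recognizing that one must feed $y = u$ into the defining inequality in order to get a handle on the term $\|w-u\|_D^2$ that appears on the right-hand side of \eqref{eq:Projw}. That substitution, combined with the restriction $\gamma < 1/2$ to ensure $1-2\gamma > 0$ when dividing, drives the entire argument.
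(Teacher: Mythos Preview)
Your proposal is correct and follows essentially the same approach as the paper: both apply the defining inequality \eqref{eq:Projw} once with $y=u$ to bound $\|w-u\|_D^2$ and once with $y=x$ to bound $\|w-x\|_D^2$, combining the two via the polarization identity. The only cosmetic difference is the order in which the two substitutions are performed.
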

\begin{proof}
	First note that $\|w-x\|_{D}^2 = \|x-v\|_{D}^2 - \|w-v\|_{D}^2 + 2 \langle D(v-w), x-w \rangle$. Since $w \in {\cal R}_{C,\gamma}^{D}(u, v)$ and $x \in C$, combining the last equality with \eqref{eq:Projw}, we obtain
	\begin{equation} \label{eq:fg}
		\|w-x\|_{D}^2 \leq \|x-v\|_{D}^2 - \|w-v\|_{D}^2  + 2\gamma \|w-u\|_{D}^2.
	\end{equation}
	On the other hand, we also have $\|w-u\|_{D}^2=\|u-v\|_{D}^2 - \|w-v\|_{D}^2 +  2 \langle D(v-w), u-w \rangle$. Due to $w\in {\cal R}_{C,\gamma}^{D}(u, v)$ and $u \in C$, using \eqref{eq:Projw} and considering that  $0 \leq \gamma < 1/2$, we have
	$$
		\|w-u\|_{D}^2 \leq \frac{1}{1-2\gamma}\|u-v\|_{D}^2 - \frac{1}{1-2\gamma} \|w-v\|_{D}^2.
	$$
	Therefore, substituting the last inequality into   \eqref{eq:fg}, we obtain the  desired inequality.
\end{proof}
In the following  lemma, we present a  relationship between   Definitions~\ref{def:InexactM} and \ref{def:InexactProjC}.
\begin{lemma} \label{pr:condrip}
	Let $v \in {\mathbb R}^n$, $u \in C$, $\gamma \geq 0$  and $\zeta\in (0, 1]$.  If  $0 \leq \gamma <1/2$ and $\zeta=1-2\gamma$, then
	$$
		{\cal R}_{C,\gamma}^{D}(u, v) \subset {\cal P}_{C,\zeta}^{D}(u, v).
	$$
\end{lemma}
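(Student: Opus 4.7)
The plan is to reduce the inclusion to a single application of Lemma~\ref{pr:cond}, evaluated at a cleverly chosen point. Fix $w \in {\cal R}_{C,\gamma}^{D}(u, v)$; I want to verify the defining inequality of ${\cal P}_{C,\zeta}^{D}(u, v)$ from Definition~\ref{def:InexactM}, namely
$$
\|w-v\|_{D}^2 \leq \zeta \| {\cal P}_{C}^{D}(v)-v\|_{D}^2+(1-\zeta)\|u-v\|_{D}^2,
$$
with $\zeta = 1 - 2\gamma$, so that $1-\zeta = 2\gamma$.

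First I would invoke Lemma~\ref{pr:cond} on $w$: for every $x \in C$,
$$
\|w-x\|_{D}^2 \leq \|x-v\|_{D}^2 + \frac{2\gamma}{1-2\gamma}\|u-v\|_{D}^2- \frac{1}{1-2\gamma}\|w-v\|_{D}^2.
$$
The natural choice is $x = {\cal P}_{C}^{D}(v) \in C$. Since the left-hand side $\|w-{\cal P}_{C}^{D}(v)\|_{D}^2$ is nonnegative, I can simply drop it to obtain
$$
0 \leq \| {\cal P}_{C}^{D}(v)-v\|_{D}^2 + \frac{2\gamma}{1-2\gamma}\|u-v\|_{D}^2 - \frac{1}{1-2\gamma}\|w-v\|_{D}^2.
$$

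Finally, rearranging and multiplying by the positive number $1-2\gamma$ (which is positive precisely because $\gamma < 1/2$), I arrive at
$$
\|w-v\|_{D}^2 \leq (1-2\gamma)\| {\cal P}_{C}^{D}(v)-v\|_{D}^2 + 2\gamma \|u-v\|_{D}^2,
$$
which is exactly the condition~\eqref{eq:Projwm} with $\zeta = 1-2\gamma$. Hence $w \in {\cal P}_{C,\zeta}^{D}(u, v)$, and the inclusion follows. There is no real obstacle here beyond identifying the right value of $\zeta$ in terms of $\gamma$ and observing that throwing away the nonnegative distance $\|w - {\cal P}_{C}^{D}(v)\|_{D}^2$ is exactly what produces the desired inequality; the hypothesis $\gamma < 1/2$ is needed both to apply Lemma~\ref{pr:cond} and to ensure $\zeta \in (0,1]$.
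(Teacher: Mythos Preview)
Your proof is correct and follows essentially the same route as the paper: both apply Lemma~\ref{pr:cond} with $x = {\cal P}_{C}^{D}(v)$ and then discard the nonnegative term $\|w-{\cal P}_{C}^{D}(v)\|_{D}^{2}$ to obtain the defining inequality of ${\cal P}_{C,\zeta}^{D}(u,v)$ with $\zeta = 1-2\gamma$. The only cosmetic difference is that you drop this term before rearranging, whereas the paper carries it through the algebra and drops it at the end.
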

\begin{proof}
	Let $w\in {\cal R}_{C,\gamma}^{D}(u, v)$. Applying Lemma~\ref{pr:cond} with  $x={\cal P}_{C}^{D}(v)$ we have
	$$
		\|w-{\cal P}_{C}^{D}(v)\|_{D}^2 \leq \|v-{\cal P}_{C}^{D}(v)\|_{D}^2 + \frac{2\gamma}{1-2\gamma}\|u-v\|_{D}^2- \frac{1}{1-2\gamma}\|w-v\|_{D}^2,
	$$
	After some algebraic manipulations in the last inequality we obtain that
	$$
		\|w-v\|_{D}^2 \leq (1-2\gamma)\|v-{\cal P}_{C}^{D}(v)\|_{D}^2 + 2\gamma\|u-v\|_{D}^2- (1-2\gamma)\|w-{\cal P}_{C}^{D}(v)\|_{D}^2.
	$$
	Therefore, considering that  $0 \leq \gamma <1/2$ and $\zeta=1-2\gamma$, the result follows from Definition~\ref{def:InexactM}.
\end{proof}

\begin{remark} \label{re:ni}
	Under the conditions of  Lemma \ref{pr:condrip}, there exists  $0 \leq \gamma <1/2$ and $\zeta=1-2\gamma$ such that ${\cal P}_{C,\zeta}^{D}(u, v)  \nsubseteq    {\cal R}_{C,\gamma}^{D}(u, v)$. Indeed,  let $\gamma=3/8$, $\zeta=1/4$,  and ${\bar w}=\frac{1}{2}({\cal P}_{C}^{D}(v)+u)$, then
	$$
		\displaystyle\|{\bar w}-v\|_D^2=\frac{1}{4}\| {\cal P}_{C}^{D}(v)-v\|_D^2 + \frac{1}{4}\|u-v\|_D^2 +\frac{1}{2} \langle D( {\cal P}_{C}^{D}(v) -v), u-v \rangle.
	$$
	Since  $ {\cal P}_{C}^{D}(v)$ is the exact projection of $v$,  we have   $\displaystyle\langle D( {\cal P}_{C}^{D}(v) -v), u-v \rangle \leq \|u-v\|_D^2$. Combining this inequality with  the last equality and Definition~\ref{def:InexactM}, we conclude that ${\bar w}\in {\cal P}_{C,\zeta}^{D}(u, v)$. Now,  letting $w_t=t{\cal P}_{C}^{D}(v)+ (1-t){\bar w}$  with $0<t<1$, after some algebraic  manipulations  we have
	$$
		\langle D(v-{\bar w}), w_t-{\bar w} \rangle=t\| {\bar w}-u\|_D^2 - \frac{t}{2} \langle D( v- {\cal P}_{C}^{D}(v)) , u-{\cal P}_{C}^{D}(v)  \rangle.
	$$
	Thus, it follows from Lemma~\ref{pr:cham} that $\displaystyle \langle D(v-{\bar w}), w_t-{\bar w} \rangle\geq t\| {\bar w}-u\|_D^2 $.  Hence,  taking  $t>3/8$ we conclude that ${\bar w}\not\in{\cal R}_{C,\gamma}^{D}(u, v)$.  Therefore, considering that ${\bar w}\in {\cal P}_{C,\zeta}^{D}(u, v)$, the statement follows.
\end{remark}
It follows from Remark~\ref{re:ni} that, in general,  ${\cal P}_{C,\zeta}^{D}(u, v)  \nsubseteq    {\cal R}_{C,\gamma}^{D}(u, v)$. However, whenever $C$ is a bounded set,  we will show  that   for each  fixed  $0 \leq \gamma <1/2$  there exist $0 < \zeta  <1$ such that    ${\cal P}_{C,\zeta}^{D}(u, v)  \subseteq    {\cal R}_{C,\gamma}^{D}(u, v)$. For that, we first need the next lemma.
\begin{lemma} \label{le:epsi}
	Let $v \in {\mathbb R}^n$, $u \in C$ and $0<\gamma < 1/2$. Assume that $C$ is a bounded set and take
	\begin{equation} \label{eq:epsi}
		0<\varepsilon < \frac{\gamma \|u-\mathcal{P}_C^D(v)\|_D^2}{1-\gamma+\|v-\mathcal{P}_C^D(v)\|_D+ 2	\gamma\|u-\mathcal{P}_C^D(v)\|_D+\mbox{\normalfont diam}C },
	\end{equation}
	where $\mbox{\normalfont diam}C$ denotes the diameter of $C$. Then, $\{w\in C: ~ \|w-\mathcal{P}_C^D(v)\|_D\leq \varepsilon\}\subset\mathcal{R}_{C,\gamma}^D(u, v)\}$.
\end{lemma}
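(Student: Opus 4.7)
The plan is to fix an arbitrary $w\in C$ with $\|w-\mathcal{P}_C^D(v)\|_D\leq \varepsilon$, set $p:=\mathcal{P}_C^D(v)$ for brevity, and show that $w$ satisfies the defining inequality of $\mathcal{R}_{C,\gamma}^D(u,v)$ in \eqref{eq:Projw}, namely $\langle D(v-w),y-w\rangle \leq \gamma\|w-u\|_D^2$ for every $y\in C$. I will upper bound the left-hand side using Lemma~\ref{pr:cham} together with Cauchy--Schwarz, and lower bound the right-hand side via a reverse triangle inequality on $\|w-u\|_D$, then compare the resulting quantities against the prescribed bound on $\varepsilon$.

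For the upper bound, I would decompose
\[
\langle D(v-w),y-w\rangle = \langle D(v-p),(y-p)+(p-w)\rangle + \langle D(p-w),y-w\rangle.
\]
The first summand splits further: by Lemma~\ref{pr:cham}, $\langle D(v-p),y-p\rangle\leq 0$, while Cauchy--Schwarz in the $D$-inner product gives $\langle D(v-p),p-w\rangle \leq \|v-p\|_D\,\|p-w\|_D \leq \varepsilon\|v-p\|_D$. The second summand is bounded by $\|p-w\|_D\,\|y-w\|_D\leq \varepsilon\,\mbox{\normalfont diam}\,C$ since $y,w\in C$. Thus
\[
\langle D(v-w),y-w\rangle \leq \varepsilon\bigl(\|v-p\|_D + \mbox{\normalfont diam}\,C\bigr).
\]

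For the lower bound, I will use $\|w-u\|_D\geq \|u-p\|_D-\|w-p\|_D\geq \|u-p\|_D-\varepsilon$; the bound \eqref{eq:epsi} makes this nonnegative because its denominator contains the term $2\gamma\|u-\mathcal{P}_C^D(v)\|_D$, which forces $\varepsilon < \|u-p\|_D/2$. Squaring and dropping the nonnegative $\varepsilon^2$ term gives
\[
\gamma\|w-u\|_D^2 \geq \gamma\|u-p\|_D^2 - 2\gamma\varepsilon\|u-p\|_D.
\]
Combining the two estimates, it suffices to verify
\[
\varepsilon\bigl(2\gamma\|u-p\|_D + \|v-p\|_D + \mbox{\normalfont diam}\,C\bigr) \leq \gamma\|u-p\|_D^2,
\]
and this is immediate from \eqref{eq:epsi}, since the denominator in \eqref{eq:epsi} exceeds the parenthesized quantity on the left above by the strictly positive amount $1-\gamma$.

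The only nontrivial point is checking that the bound on $\varepsilon$ is strong enough to make both the reverse triangle inequality nonnegative \emph{and} absorb the cross terms coming from the squared expansion of $(\|u-p\|_D-\varepsilon)^2$; this is precisely why \eqref{eq:epsi} is calibrated with the $1-\gamma$ slack and the $2\gamma\|u-\mathcal{P}_C^D(v)\|_D$ term in its denominator. The degenerate case $u=p$ does not arise, since the right-hand side of \eqref{eq:epsi} would then be zero, contradicting $\varepsilon>0$.
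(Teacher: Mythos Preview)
Your proof is correct and follows essentially the same route as the paper: decompose $\langle D(v-w),y-w\rangle$ using $p=\mathcal{P}_C^D(v)$, kill one term with Lemma~\ref{pr:cham}, bound the remaining ones by Cauchy--Schwarz and $\mbox{diam}\,C$, then compare against $\gamma(\|u-p\|_D-\varepsilon)^2$ via the reverse triangle inequality and \eqref{eq:epsi}. The only cosmetic difference is that you keep $\langle D(p-w),y-w\rangle$ intact rather than splitting $y-w=(y-p)+(p-w)$ as the paper does; this sidesteps the extra $\varepsilon^2$ term the paper incurs and then has to absorb via $\varepsilon^2<\varepsilon(1-\gamma)+\gamma\varepsilon^2$, so your version is in fact slightly cleaner.
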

\begin{proof}
	Take $\varepsilon$ satisfying \eqref{eq:epsi} and $w\in C$ such that  $\|w-\mathcal{P}_C^D(v)\|_D\leq \varepsilon$ . For all $z\in C$, we have
	\begin{multline*}
		\langle D(v-w), z-w \rangle = \langle D(v-\mathcal{P}_C^D(v)),z-\mathcal{P}_C^D(v) \rangle + \langle D(v-\mathcal{P}_C^D(v)), \mathcal{P}_C^D(v) -w \rangle\\
		+ \langle D(\mathcal{P}_C^D(v) - w), z- \mathcal{P}_C^D(v) \rangle +\| \mathcal{P}_C^D(v)-w\|_D^2.
	\end{multline*}
	Using Lemma~\ref{pr:cham},  we have $\langle D(v-\mathcal{P}_C^D(v)),z-\mathcal{P}_C^D(v) \rangle\leq 0$. Thus, the last equality becomes
	$$
		\langle D(v-w), z-w \rangle \leq \langle D(v-\mathcal{P}_C^D(v)), \mathcal{P}_C^D(v) -w \rangle + \langle D(\mathcal{P}_C^D(v) - w), z- \mathcal{P}_C^D(v) \rangle +\| \mathcal{P}_C^D(v)-w\|_D^2.
	$$
	By using Cauchy-Schwarz inequality, we conclude from the last inequality that
	$$
		\langle D(v-w), z-w \rangle \leq  \|w-\mathcal{P}_C^D(v) \|_D\left(\|v-\mathcal{P}_C^D(v)\|_D+ \|z- \mathcal{P}_C^D(v)\|_D\right) +\|w-\mathcal{P}_C^D(v)\|_D^2.
	$$
	Since $\|w-\mathcal{P}_C^D(v)|_D\leq \varepsilon$ and $ \|z- \mathcal{P}_C^D(v)\|_D\leq \mbox{diam} C$, the last inequality implies that
	\begin{equation}\label{eq:diam1}
		\langle D(v-w), z-w \rangle \leq \varepsilon \left(\|v-\mathcal{P}_C^D(v)\|_D+\mbox{diam}C\right)+\varepsilon^2,
	\end{equation}
	On the other hand, if $\varepsilon$ satisfies \eqref{eq:epsi} then
	$$
		\varepsilon \left( 1-\gamma+\|v-\mathcal{P}_C^D(v)\|_D+ \mbox{diam} C\right) + \gamma \varepsilon^2 <
		\gamma\|u-\mathcal{P}_C^D(v)\|_D^2-2	\gamma\varepsilon\|u-\mathcal{P}_C^D(v)\|_D +\gamma\varepsilon^2,
	$$
	hence
	$$
		\varepsilon \left( 1-\gamma+\|v-\mathcal{P}_C^D(v)\|_D+ \mbox{diam} C\right) + \gamma \varepsilon^2 < \gamma\left(\|u-\mathcal{P}_C^D(v)\|_D-\varepsilon\right)^2.
	$$
	Since $\gamma, \varepsilon<1$, we have  $\varepsilon^2 < \varepsilon(1-\gamma) + \gamma \varepsilon^2$ and we can conclude that
	$$
		\varepsilon \left( \|v-\mathcal{P}_C^D(v)\|_D+ \mbox{diam} C\right) + \varepsilon^2 <
		\gamma\left(\|u-\mathcal{P}_C^D(v)\|_D-\varepsilon\right)^2.
	$$
	It follows from \eqref{eq:diam1} that
	\begin{equation}\label{eq:diam2}
		\langle D(v-w), z-w \rangle \leq \gamma\left(\|u-\mathcal{P}_C^D(v)\|_D-\varepsilon\right)^2.
	\end{equation}
	Using again that $\|w-\mathcal{P}_C^D(v)|_D\leq \varepsilon$ and the triangular inequality, we have
	$$
		0<\|u-\mathcal{P}_C^D(v)\|_D -\varepsilon \leq \|u-\mathcal{P}_C^D(v)\|_D -\|w-\mathcal{P}_C^D(v)\|_D \leq \|u-w\|_D.
	$$
	Hence,   taking into account  \eqref{eq:diam2}, we conclude that $	\langle D(v-w), z-w \rangle \leq \gamma\|u-w\|_D^2$. Therefore, it follows from Definition~\ref{def:InexactProjC} that    $w\in\mathcal{R}_{C,\gamma}^D(u, v)$.
\end{proof}
\begin{proposition} \label{le:pcr}
	Let $v \in {\mathbb R}^n$, $u \in C$ and assume that $C$ is a bounded set. Then, for each $0<\gamma < 1/2$,     there exist $0 < \zeta  <1$ such that    ${\cal P}_{C,\zeta}^{D}(u, v)  \subseteq    {\cal R}_{C,\gamma}^{D}(u, v)$.
\end{proposition}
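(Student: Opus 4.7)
My plan is to reduce the proposition to Lemma~\ref{le:epsi} by proving the following quantitative estimate: if $w \in {\cal P}_{C,\zeta}^{D}(u, v)$, then
\[
\|w - {\cal P}_{C}^{D}(v)\|_{D}^2 \leq (1-\zeta)\bigl(\|u-v\|_{D}^2 - \|{\cal P}_{C}^{D}(v)-v\|_{D}^2\bigr).
\]
Once this is in hand, given $\gamma \in (0,1/2)$, I first pick $\varepsilon>0$ satisfying the bound in \eqref{eq:epsi} (so Lemma~\ref{le:epsi} guarantees that $\|w-{\cal P}_C^D(v)\|_D \leq \varepsilon$ forces $w \in {\cal R}_{C,\gamma}^D(u,v)$), and then I choose $\zeta \in (0,1)$ sufficiently close to $1$ so that $(1-\zeta)\bigl(\|u-v\|_{D}^2 - \|{\cal P}_{C}^{D}(v)-v\|_{D}^2\bigr) \leq \varepsilon^2$. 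This directly gives the desired inclusion.

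The key estimate itself should follow from a short algebraic calculation together with the characterization of the exact projection. Expanding the identity $\|w-v\|_D^2 = \|w - {\cal P}_C^D(v)\|_D^2 + \|{\cal P}_C^D(v)-v\|_D^2 + 2\langle D(w-{\cal P}_C^D(v)),{\cal P}_C^D(v)-v\rangle$ and invoking Lemma~\ref{pr:cham} with $y = w \in C$ shows that the cross term is nonnegative, whence
\[
\|w-v\|_D^2 \geq \|w - {\cal P}_C^D(v)\|_D^2 + \|{\cal P}_C^D(v)-v\|_D^2.
\]
Combining this lower bound with the inequality defining membership in ${\cal P}_{C,\zeta}^{D}(u, v)$ from Definition~\ref{def:InexactM} and rearranging gives the estimate above.

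The one place requiring care is the degenerate case $u = {\cal P}_C^D(v)$, where $\|u-v\|_D^2 - \|{\cal P}_C^D(v)-v\|_D^2 = 0$ and Lemma~\ref{le:epsi}'s bound \eqref{eq:epsi} degenerates. I would handle this separately: in that case the defining inequality of ${\cal P}_{C,\zeta}^{D}(u,v)$ collapses to $\|w-v\|_D^2 \leq \|{\cal P}_C^D(v)-v\|_D^2$, so by uniqueness of the exact projection $w = {\cal P}_C^D(v) = u$, and then $\langle D(v-w), y-w\rangle \leq 0 \leq \gamma\|w-u\|_D^2$ for every $y \in C$ by Lemma~\ref{pr:cham}, placing $w$ in ${\cal R}_{C,\gamma}^D(u,v)$ for any $\zeta \in (0,1)$.

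The main obstacle is really just setting up the algebra correctly so that the "$(1-\zeta)$ factor" appears as a clean multiplier of a fixed quantity depending only on $u$, $v$, $D$ and $C$; once that quantitative closeness in the $\|\cdot\|_D$ norm is isolated, the proposition drops out of Lemma~\ref{le:epsi}. Boundedness of $C$ does not enter my estimate directly but is used through Lemma~\ref{le:epsi} (via $\mbox{diam}\,C$ in \eqref{eq:epsi}).
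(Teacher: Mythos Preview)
Your proposal is correct and follows essentially the same approach as the paper: use Lemma~\ref{le:epsi} to get an $\varepsilon$-ball around ${\cal P}_C^D(v)$ contained in ${\cal R}_{C,\gamma}^D(u,v)$, then take $\zeta$ close enough to $1$ that ${\cal P}_{C,\zeta}^D(u,v)$ lies inside this ball. Your explicit bound $\|w-{\cal P}_C^D(v)\|_D^2 \leq (1-\zeta)\bigl(\|u-v\|_D^2 - \|{\cal P}_C^D(v)-v\|_D^2\bigr)$ makes rigorous what the paper's proof only sketches (``the diameter of $C\cap{\cal P}_{C,\zeta}^D(u,v)$ tends to zero as $\zeta\to 1$''), and your separate treatment of the degenerate case $u={\cal P}_C^D(v)$ fills a gap that the paper's proof leaves implicit.
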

\begin{proof}
	It follows from Lemma~ \ref{le:epsi}  that given $0<\gamma<1/2$ there exists $\varepsilon>0$ such that,  for all $w\in C$  with $\|w-\mathcal{P}_C^D(v)\|\leq \varepsilon$, we have  $w\in\mathcal{R}_\gamma^D(v)$. Otherwise,  we can see in (\ref{eq:Projwm}), when $\zeta\to 1$, the diameter of $C\cap{\cal P}_{C,\zeta}^{D}(u, v)$ tends to zero, then there exists  $\zeta$ close to 1 such that $\mbox{diam}(C\cap{\cal P}_{C,\zeta}^{D}(u, v))<\varepsilon/2$, and ${\cal P}_{C,\zeta}^{D}(u, v) \subset {\cal R}_{C,\gamma}^{D}(u, v)$.
\end{proof}
Next, we present some important properties of  inexact projections, which will be useful in  the sequel.
\begin{lemma} \label{Le:ProjProperty}
	Let $x \in C$, $\alpha > 0$ and  $z(\alpha) = x-\alpha D^{-1} \nabla f(x)$. Take $w(\alpha) \in  {\cal P}_{C,\zeta}^{D}(x, z(\alpha))$ with $\zeta\in (0, 1]$. Then, there hold
	\begin{itemize}
		\item[(i)] $\displaystyle \langle \nabla f(x), w(\alpha) - x\rangle \leq -\frac{1}{2\alpha} \|w(\alpha) -x\|_{D}^2 +   \frac{\zeta}{2\alpha} \left[\| {\cal P}_{C}^{D}(z(\alpha))-z(\alpha)\|_{D}^2 - \|x-z(\alpha)\|_{D}^2\right]$;
		\item[(ii)] the point $x$ is stationary for problem \eqref{eq:OptP} if and only if $x \in {\cal P}_{C,\zeta}^{D}(x, z(\alpha))$;
		\item[(iii)] if  $x \in C$ is a nonstationary point for problem \eqref{eq:OptP}, then $\Big\langle \nabla f(x), w(\alpha) - x \Big\rangle < 0$. Equivalently, if there exists ${\bar \alpha}>0$ such that $\Big\langle \nabla f(x), w({\bar \alpha}) - x \Big\rangle \geq 0$, then $x$ is stationary for problem \eqref{eq:OptP}.
	\end{itemize}
\end{lemma}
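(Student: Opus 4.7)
The three items are tightly coupled, and the whole argument hinges on the identity
\[
\nabla f(x) \;=\; \tfrac{1}{\alpha}\,D\bigl(x-z(\alpha)\bigr),
\]
which is just a rewriting of the definition of $z(\alpha)$. This lets me trade the gradient inner product for a $D$-weighted one and, in particular, apply the inexact-projection characterizations from Lemma~\ref{pr:condm} and Lemma~\ref{pr:cham}.

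For item (i), I would first write
\[
\langle \nabla f(x), w(\alpha)-x\rangle \;=\; \tfrac{1}{\alpha}\langle D(x-z(\alpha)), w(\alpha)-x\rangle,
\]
and then use the polarization-style identity
\[
\langle D(z(\alpha)-w(\alpha)),\, x-w(\alpha)\rangle \;=\; \langle D(x-z(\alpha)),\, w(\alpha)-x\rangle + \|w(\alpha)-x\|_{D}^{2},
\]
so that bounding the left-hand side bounds what I want. Applying Lemma~\ref{pr:condm} with $v=z(\alpha)$, $u=x$, $y=x$, $w=w(\alpha)$ handles the left-hand side, and the $(1-\zeta)\|x-z(\alpha)\|_D^2$ term combines with $-\|x-z(\alpha)\|_D^2$ to collapse into the factor $\zeta$ in the stated bound. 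This is the only computational step of the proof and should be routine.

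For item (ii), the key observation is that $x={\cal P}_{C}^{D}(z(\alpha))$ \emph{if and only if} $x$ is stationary: by Lemma~\ref{pr:cham}, $x={\cal P}_{C}^{D}(z(\alpha))$ is equivalent to $\langle D(z(\alpha)-x),\,y-x\rangle\leq 0$ for all $y\in C$, and substituting $D(z(\alpha)-x)=-\alpha\nabla f(x)$ turns this into the stationarity condition \eqref{eq:StatPoint}. On the other hand, $x\in{\cal P}_{C,\zeta}^{D}(x,z(\alpha))$ unpacks via \eqref{eq:Projwm} (after cancelling the $(1-\zeta)\|x-z(\alpha)\|_D^2$ against $\|x-z(\alpha)\|_D^2$) to the inequality
\[
\|x-z(\alpha)\|_{D}^{2}\;\leq\;\|{\cal P}_{C}^{D}(z(\alpha))-z(\alpha)\|_{D}^{2},
\]
which, combined with the reverse inequality granted by the definition of the exact projection, forces equality and hence (by uniqueness of the exact projection) $x={\cal P}_{C}^{D}(z(\alpha))$. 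Chaining the two equivalences closes the loop.

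For item (iii), I would argue by contrapositive using (i) and (ii): if $x$ is nonstationary, then by (ii) we have $x\notin {\cal P}_{C,\zeta}^{D}(x,z(\alpha))$, so in particular $w(\alpha)\neq x$, and moreover $\|{\cal P}_{C}^{D}(z(\alpha))-z(\alpha)\|_{D}<\|x-z(\alpha)\|_{D}$ (strict, again because equality would force $x={\cal P}_{C}^{D}(z(\alpha))$, i.e.\ stationarity). Both terms on the right-hand side of (i) are then strictly negative, giving $\langle \nabla f(x), w(\alpha)-x\rangle<0$; the ``equivalently'' reformulation is just the contrapositive specialized to $\alpha=\bar\alpha$. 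I do not anticipate any real obstacle: the only subtlety is making sure the $\zeta\in(0,1]$ hypothesis is used where needed (i.e.\ to divide by $\zeta$ in the equivalence of (ii)), and keeping track of the sign conventions when converting between the exact-projection inequality of Lemma~\ref{pr:cham} and the gradient inner product.
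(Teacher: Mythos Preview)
Your proposal is correct and follows essentially the same route as the paper: item~(i) via Lemma~\ref{pr:condm} with $u=y=x$, $v=z(\alpha)$, $w=w(\alpha)$ and the substitution $D(x-z(\alpha))=\alpha\nabla f(x)$; item~(ii) via Definition~\ref{def:InexactM} collapsing to $\|x-z(\alpha)\|_D\le\|{\cal P}_C^D(z(\alpha))-z(\alpha)\|_D$ and uniqueness of the exact projection; item~(iii) via (i) and (ii). The only noticeable difference is that for the forward implication in (ii) the paper invokes item~(i) (stationarity forces $\langle\nabla f(x),w(\alpha)-x\rangle\ge 0$, whence $w(\alpha)=x$), whereas you go directly through Lemma~\ref{pr:cham} to get $x={\cal P}_C^D(z(\alpha))\in{\cal P}_{C,\zeta}^D(x,z(\alpha))$; your route is slightly more direct but the content is the same.
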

\begin{proof}
	Since $w(\alpha) \in  {\cal P}_{C,\zeta}^{D}(x, z(\alpha))$, applying Lemma~\ref{pr:condm} with   $w=w(\alpha)$, $v=z(\alpha)$, $y=x$,  and $u=x$, we conclude,  after some algebraic manipulations,  that
	$$
		\left\langle D(z(\alpha)-w(\alpha)), x-w(\alpha) \right\rangle \leq 	\frac{1}{2} \|w(\alpha) -x\|_{D}^2 +   \frac{\zeta}{2} \left[\| {\cal P}_{C}^{D}(z(\alpha))-z(\alpha)\|_{D}^2 - \|x-z(\alpha)\|_{D}^2\right].
	$$
	Substituting  $z(\alpha) = x-\alpha \nabla f(x)$ in the left hand side of the last inequality,   some manipulations yield the inequality of item $(i)$.  For proving item $(ii)$, we first assume that $x$ is stationary for problem \eqref{eq:OptP}. In this case, \eqref{eq:StatPoint} implies that $\langle \nabla f(x), w(\alpha)-x \rangle \geq 0$. Hence,  due to $\|{\cal P}_{C}^{D}(z(\alpha))-z(\alpha)\|_{D}\leq  \|x-z(\alpha)\|_{D}$,   item $(i)$ implies
	$$
		\frac{1}{2\alpha} \|w(\alpha) -x\|_{D}^2 \leq    \frac{\zeta}{2\alpha} \left[\| {\cal P}_{C}^{D}(z(\alpha))-z(\alpha)\|_{D}^2 - \|x-z(\alpha)\|_{D}^2\right]\leq 0.
	$$
	Since $\alpha > 0$ and  $\zeta\in (0, 1]$, the last inequality  yields  $w(\alpha) = x$.  Therefore, $x \in {\cal P}_{C,\zeta}^{D}(x, z(\alpha))$. Reciprocally, if  $x \in {\cal P}_{C,\zeta}^{D}(x, z(\alpha))$, then Definition~\ref{def:InexactM} implies that
	$$
		\|x-z(\alpha)\|_{D}^2\leq \zeta \| {\cal P}_{C}^{D}(z(\alpha))-z(\alpha)\|_{D}^2+(1-\zeta)\|x-z(\alpha)\|_{D}^2.
	$$
	Hence, $0\leq \zeta\left( \| {\cal P}_{C}^{D}(z(\alpha))-z(\alpha)\|_{D}^2-(\|x-z(\alpha)\|_{D}^2\right)$. Considering that  $\zeta\in (0, 1]$ we have
	$$
		\|x-z(\alpha)\|_{D}\leq\| {\cal P}_{C}^{D}(z(\alpha))-z(\alpha)\|_{D}.
	$$
	Thus, due to  exact projection  with respect to the norm $\| \cdot \| _{D}$   be unique and $z(\alpha) = x-D^{-1} \alpha \nabla f(x)$,   we have    ${\cal P}_{C}^{D}( x-\alpha D^{-1} \nabla f(x))=x$. Hence, $x$ is the  solution of the constrained optimization problem $  \min _{y\in C}  \|y-z(\alpha)\| ^{2}_{D}$,
	which taking into account that  $\alpha > 0$ implies \eqref{eq:StatPoint}. Therefore, $x$ is stationary point for problem \eqref{eq:OptP}. Finally, to prove item $(iii)$, take $x$ a nonstationary point for problem \eqref{eq:OptP}. Thus, by item $(ii)$, $x \notin  {\cal P}_{C,\zeta}^{D}(x, z(\alpha))$ and taking into account that $w(\alpha) \in  {\cal P}_{C,\zeta}^{D}(x, z(\alpha))$, we conclude that $x \neq w(\alpha)$. Since $\|{\cal P}_{C}^{D}(z(\alpha))-z(\alpha)\|_{D}\leq  \|x-z(\alpha)\|_{D}$, $\alpha > 0$ and $\zeta\in (0, 1]$, it follows from item $(i)$ that $\Big\langle \nabla f(x), w(\alpha) - x \Big\rangle < 0$ and the first sentence is proved. Finally, note that the second sentence is the contrapositive of the first sentence.
\end{proof}
Finally, it is worth mentioning that Definitions~\ref{def:InexactM} and \ref{def:InexactProjC}, introduced respectively in \cite{BirginMartinezRaydan2003} and \cite{OrizonFabianaGilson2018},  are relative inexact  concepts, while the concept introduced in \cite{SalzoVilla2012,VillaSalzo2013} is absolute.

\subsubsection{Practical computation of  inexact projections} \label{sec:pcip}

In this section,  for a given  $v\in {\mathbb R}^n$ and   $u \in C$, we discuss how to calculate a  point  $w \in C$ belonging to  ${\cal P}_{C, \zeta}^{D}(u,v)$ or ${\cal R}_{C, \gamma}^{D}(u,v)$.  We recall that  Lemma~\ref{pr:condrip} implies that ${\cal P}_{C, \zeta}^{D}(u,v)$ has  more latitude than ${\cal R}_{C, \gamma}^{D}(u,v)$, i.e., ${\cal R}_{C, \gamma}^{D}(u,v) \subset {\cal P}_{C, \zeta}^{D}(u,v)$.

We begin our discussion by showing how a point $w\in {\cal P}_{C, \zeta}^{D}(u,v)$ can be calculated  without knowing the point ${\cal P}_{C}^{D}(v)$.   Considering that this discussion has already been covered in \cite[Section~3,  Algorithm 3.1]{BirginMartinezRaydan2003},  we will limit ourselves to giving a general idea of how this task is carried out; see also \cite[Section~5.1]{Bonettini2016}.   The idea is to use an external procedure capable of computing two  sequences  $(c_\ell)_{\ell\in\mathbb{N}}\subset \mathbb{R}$  and   $(w^\ell)_{\ell\in\mathbb{N}}\subset C$ satisfying the following  conditions
\begin{equation}\label{def:cl}
	c_\ell \leq \|{\cal P}_{C}^{D}(v)-v\|_D^2,\quad  \forall\ell\in\mathbb{N}, \qquad \qquad \lim_{\ell \to +\infty} c_\ell = \|{\cal P}_{C}^{D}(v)-v\|_D^2, \qquad \quad   \lim_{\ell\to +\infty} w^\ell = {\cal P}_{C}^{D}(v).
\end{equation}
In this case,   if  $v\notin C$, then   we have $ \|{\cal P}_{C}^{D}(v)-v\|_D^2- \|u-v\|_D^2<0$. Hence,   given an arbitrary $ \zeta\in (0,1)$,  the second condition in \eqref{def:cl} implies  that   there exists $\hat{\ell}$ such that
$$
	\|{\cal P}_{C}^{D}(v)-v\|_D^2- \|u-v\|_D^2< \zeta (c_{\hat{\ell}}- \|u-v\|_D^2).
$$
Moreover, by using the last condition in \eqref{def:cl}, we conclude that   there exists $\bar{\ell}>\hat{\ell}$ such that
\begin{equation}\label{def:clsc}
	\|w_{\bar{\ell}}-v\|_D^2- \|u-v\|_D^2< \zeta (c_{\bar{\ell}}- \|u-v\|_D^2),
\end{equation}
which using   the inequality  in \eqref{def:cl} yields  $ \ \|w_{\bar{\ell}}-v\|_D^2<  \zeta \|{\cal P}_{C}^{D}(v)-v\|_D^2  +(1-\zeta) \|u-v\|_D^2$.
Hence, Definition~\ref{def:InexactM} implies that  $w_{\bar{\ell}}\in  {\cal P}_{C, \zeta}^{D}(u, v)$.   Therefore,   \eqref{def:clsc}  can be used as a stopping criterion to compute  a  feasible inexact projection,  with respect to the norm $\| \cdot \|_{D}$,  of $v$ onto $C$ relative to $u$ and forcing parameter $\zeta\in (0, 1]$. For instance,  it follows from   \cite[Theorem~3.2, Lemma~3.1]{BirginMartinezRaydan2003}  that  such sequences  $(c_\ell)_{\ell\in\mathbb{N}}\subset \mathbb{R}$  and   $(w^\ell)_{\ell\in\mathbb{N}}\subset C$  satisfying  \eqref{def:cl}  can be computed by using   {\it Dykstra's algorithm} \cite{Dykstra1986, Dykstra1983}, whenever  $D$ is the identity matrix  and the set $C=\cap_{i=1}^p C_i$, where $C_i$ are closed and convex sets and the exact projection  ${\cal P}_{C_i}^{D}(v)$ is easy to obtain, for all $i=1,\dots,p$.

We end this section by discussing how to compute a point $w\in{\cal R}_{C, \gamma}^{D}(u,v)$. For that, we apply the classical {\it Frank-Wolfe method}, also known as {\it conditional gradient method},   to minimize the function  $\psi(z) := \|z - v\|^2/2$ onto  the constraint set $C$ with  a suitable  stop criteria depending  of $u\in C$ and $\gamma \in (0, 1]$, see  \cite{BeckTeboulle2004, Jaggi2013}.  To state the method  we assume the existence of a linear optimization oracle (or simply LO oracle) capable of minimizing linear functions over the constraint set $C$,   which is assumed to be  compact. The   Frank-Wolfe method is  formally stated as follows.

\begin{algorithm}[H]
	\begin{description}
		\item[Input:]  $D\in {\cal D}_{\mu}$,  $\gamma \in (0, 1]$,  $v\in {\mathbb R}^n$ and   $u \in C$. 
		\item[ Step 0.] Let $w^0\in  C$ and  set $\ell \gets 0$.
		\item[ Step 1.] Use a LO oracle to compute an optimal solution $z^\ell$ and the optimal value $s_{\ell}^*$ as
			\begin{equation}\label{eq:CondG_{C}}
				z^\ell \in  \arg\min_{z \in  C} \,\langle w^\ell-v, ~z-w^\ell\rangle,  \qquad s_{\ell}^*:=\langle  w^\ell-v, ~z^\ell-w^\ell \rangle.
			\end{equation}
			If $-s^*_{\ell}\leq \gamma \|w^{\ell}-u\|_{D}^2$, then define $w:=w^\ell$ and {\bf stop}.
		\item[ Step 2.]  Compute $\alpha_\ell$ and $w_{\ell+1}$ as
			\begin{equation}\label{eq:step size}
				w_{\ell+1}:=w^\ell+ \alpha_\ell(z^\ell-w^\ell), \qquad {\alpha}_\ell: =\min\left\{1, -s^*_{\ell}/\|z^\ell-w^\ell\|^2 \right\}.
			\end{equation}
			Set $\ell\gets \ell+1$, and go to Step~1.
		\item[Output:]   $w:=w^\ell$.
	\end{description}
	\caption{{\bf: Frank-Wolfe  method to compute} $w\in {\cal R}_{C, \gamma}^{D}(u,v)$}
	\label{Alg:CondG}
\end{algorithm}

Let us  describe the main features of  Algorithm~\ref{Alg:CondG}, i.e.,  the  Frank-Wolfe method applied to the problem $\min_{z \in C}\psi(z)$.   In this case,  \eqref{eq:CondG_{C}} is equivalent to $s_{\ell}^*:=\min_{z \in C}\langle \psi'(w^\ell) ,~z-w^\ell\rangle$. Since $\psi$ is convex, we have $\psi(z)\geq \psi(w^\ell) + \langle \psi'(w^\ell) ,~z-w^\ell\rangle\geq    \psi(w^\ell)  +   s_{\ell}^*$, for all $z\in C$. Define  $ w_*:=\arg \min_{z \in C}\psi(z)$ and  $\psi^*:= \min_{z \in C}\psi(z)$. Letting $z= w_*$ in the last inequality,  we obtain $\psi(w^\ell)\geq \psi^* \geq \psi(w^\ell)  +   s_{\ell}^*$, which implies that $s_{\ell}^*< 0$ whenever $\psi(w^\ell)\neq \psi^*$. Thus, we conclude that  $-s_{\ell}^*=\langle  v-w^\ell, ~z^\ell-w^\ell \rangle>0\geq  \langle  v-w_*, ~z-w_* \rangle$,  for all~$z\in C$.   Therefore, if  Algorithm~\ref{Alg:CondG} computes  $w^\ell \in C$ satisfying $-s_{\ell}^*\leq  \gamma \|w^{\ell}-u\|_{D}^2$, then the method terminates. Otherwise, it computes the step size $\alpha_\ell = \arg\min_{\alpha \in [0,1]} \psi(w^\ell + \alpha(z^\ell - w^\ell))$  using exact minimization.  Since $z^\ell$, $w^\ell \in C$  and $C$ is convex, we conclude from  \eqref{eq:step size}  that $w_{\ell+1} \in C$, thus  Algorithm~\ref{Alg:CondG}  generates a sequence in $C$.  Finally,   \eqref{eq:CondG_{C}} implies that  $\langle  v-w^\ell, ~z-w^\ell\rangle\leq -s_{\ell}^*$, for all  $ z\in C$.  Considering that   \cite[Proposition A.2]{BeckTeboulle2004}  implies that $\lim_{\ell \to +\infty}s_{\ell}^*=0$ and taking into account   the  stopping criteria     $-s_{\ell}^*\leq \gamma \|w^{\ell}-u\|_{D}^2$, we conclude that the output of  Algorithm~\ref{Alg:CondG} is  a feasible inexact projection $w\in {\cal R}_{C, \gamma}^{D}(u,v)$ i.e.,   $\langle  v-w, ~z-w\rangle\leq   \gamma \|w^{\ell}-u\|_{D}^2$, for all $z\in C$.


\section{Inexact scaled gradient method} \label{Sec:SGM}

The aim of this section is to present an  inexact  version of the scaled gradient method (SGM), which   inexactness are in two distinct senses.  First,  we  use  a version of the inexactness scheme introduced in \cite{BirginMartinezRaydan2003},  and also a variation of the one appeared in \cite{VillaSalzo2013},  to compute an inexact projection  onto the feasible  set   allowing an appropriate  relative error tolerance. Second,  using the  inexactness  conceptual scheme for  non-monotones    line  search   introduced  in  \cite{GrapigliaSachs2017, SachsSachs2011}, a step size is computed  to define the next iterate.  The statement of the  conceptual algorithm is as follows.\\

\begin{algorithm}[H]
	\begin{description}
		 \item[Step 0.] Choose  $\sigma,{\zeta_{\min}}  \in (0, 1)$, $\delta_{\min}\in [0, 1)$, $0<\underline \omega<\bar \omega<1$, $0 < \alpha_{\min} \leq \alpha_{\max}$ and $\mu \geq1$. Let $x^0\in C$, $\nu_0\geq 0$ and set $k \gets 0$.
		 \item[Step 1.] Choose positive real numbers $\alpha_k$ and $\zeta_k$, and a positive definite matrix $D_k$ such that
		\begin{equation} \label{eq:TolArm}
			\alpha_{\min}\leq \alpha_k \leq \alpha_{\max}, \qquad \qquad 0 <{\zeta_{\min}}<\zeta_k \leq 1, \qquad \qquad D_k\in {\cal D}_{\mu}.
		\end{equation}
		Compute $w^{k}\in C$  as any feasible inexact projection  with respect to the norm $\| \cdot \| _{D_k}$ of $z^k := x^{k}-\alpha_k D_k^{-1}\nabla f(x^{k})$ onto $C$ relative to $x^{k}$  with forcing parameter $\zeta_k$, i.e.,
		\begin{equation} \label{eq:PInexArm}
			w^k \in   {\cal P}_{C, \zeta_k}^{D_k}(x^{k}, z^k).
		\end{equation}
		If $w^k= x^k$, then {\bf stop} declaring convergence.
		 \item[Step 2.]  Set $\tau_{\textrm{trial}} \gets 1$. If
		\begin{equation}\label{eq:TkArm}
			 f\big(x^{k}+ \tau_{\textrm{trial}}(w^k - x^{k})\big) \leq f(x^{k}) + \sigma \tau_{\textrm{trial}}\big\langle \nabla f(x^{k}), w^k - x^{k} \big\rangle + \nu_k,
		\end{equation}
		then  $\tau_k\gets \tau_{\textrm{trial}}$, define the next iterate $x^{k+1}$ as
		\begin{equation} \label{eq:IterArm}
			x^{k+1} = x^{k} + \tau_k (w^k - x^{k}),
		\end{equation}
		and go to {\bf Step 3}. Otherwise, choose $\tau_{\textrm{new}} \in [\underline\omega \tau_{\textrm{trial}}, \bar\omega \tau_{\textrm{trial}} ]$, set $\tau_{\textrm{trial}} \gets \tau_{\textrm{new}}$, and repeat test \eqref{eq:TkArm}.
		
		 \item[Step 3.]  Take  $\delta_{k+1}\in [\delta_{\min}, 1]$ and choose    $\nu_{k+1}\in {\mathbb R}$ satisfying
		\begin{equation} \label{eq:nuk}
			0\leq \nu_{k+1}\leq (1-\delta_{k+1})\big[f(x^{k})+\nu_{k}-f(x^{k+1})\big].
		\end{equation}
		Set $k\gets k+1$ and go to \textbf{Step~1}.
\end{description}
	
	\caption{ InexProj-SGM employing non-monotone line search}
	\label{Alg:GeneralSeach}
\end{algorithm}

Let us describe the main features of Algorithm~\ref{Alg:GeneralSeach}. In Step~1,  we first  choose   $\alpha_{\min}\leq \alpha_k \leq \alpha_{\max}$, $0 < \zeta_{\min} \leq \zeta_k  < 1$, and  $D_k\in  {\cal D}_{\mu}$. Then, by using some (inner) procedure, such as those specified in Section~\ref{Sec:SubInexProj}, we compute $w^k$ as any feasible inexact projection of $z^k = x_k - \alpha_kD_k^{-1}\nabla f(x_k)$ onto the feasible set $C$ relative to the previous iterate $x^k$ with forcing parameter $\zeta_k$. If $w^k= x^k$, then Lemma~\ref{Le:ProjProperty}{\it (ii)} implies that $x^{k}$ is a solution of  problem \eqref{eq:OptP}.  Otherwise,  $w^k\neq  x^k$ and Lemma~\ref{Le:ProjProperty}{\it (i)}  implies  that $ w^k- x^k$ is a descent direction of $f$ at $x^k$, i.e.,  $\langle \nabla f(x^k), w^k- x^k \rangle < 0$.    Hence, in Step~2, we employ a non-monotone line search  with tolerance parameter $\nu_k\geq 0$ to compute a step size  $\tau_k \in (0, 1]$,  and  the next iterate is computed as in \eqref{eq:IterArm}. Finally, due to  \eqref{eq:TkArm} and  $\delta_{k+1}\in [\delta_{\min}, 1]$, we have $0\leq (1-\delta_{k+1})\big[f(x^{k})+\nu_{k}-  f(x^{k+1})\big]$.  Therefore, the next   tolerance parameter $\nu_{k+1}\in {\mathbb R}$ can be chosen satisfying \eqref{eq:nuk}  in Step~3, completing the iteration.

 It is worth mentioning that the conditions in \eqref{eq:TolArm}  allow combining several strategies for choosing the step sizes $\alpha_k$  and the matrices $D_k$  to accelerate the performance of the classical gradient method.   Strategies  of choosing the step sizes $\alpha_k$  and the matrices $D_k$ have their origin in the study of the gradient  method  for unconstrained  optimization,  papers dealing with this issue include  but are not limited to \cite{BB1988, DaiHage2006, Serafino2018, Friedlander1999, Dai2006}, see also  \cite{BonettiniPrato2015, DaiFletcher2005, DaiFletcher2006, Polyak_Levitin1966}. More details  about   selecting  step sizes $\alpha_k$  and matrices $D_k$  can be found in the recent  review  \cite{bonettini2019recent} and  references therein.

Below, we present some  particular instances  of the parameter   $\delta_k\geq 0$ and  the non-monotonicity tolerance parameter $ \nu_ {k} \geq 0$  in Step~3.

\begin{enumerate}
\item {\it Armijo line search} 

	Taking  $\nu_k\equiv 0$, the line search   \eqref{eq:TkArm}  is the well-known (monotone) Armijo line search, see \cite[Section 2.3]{Bertsekas1999}. In this case, we  can take  $\delta_k\equiv 1$ in Step~3. 
	
		\item {\it Max-type line search} 
	
		The earliest non-monotone line search strategy  was proposed  in \cite{Grippo1986}. Let $M>0$ be an integer parameter. In an iteration $k$, this strategy requires a step size $\tau_k>0$ satisfying
		\begin{equation}\label{eq:grippo}
		f\big(x^{k}+ \tau_k(w^k - x^{k})\big) \leq \max_{0\leq j\leq m_k}f(x^{k-j}) + \sigma \tau_k\big\langle \nabla f(x^{k}), w^k - x^{k} \big\rangle,
	\end{equation}
	where $m_0=0$ and $0\leq m_k\leq \min\{m_{k-1}+1, M\}$.  To simplify the notations,  we define $f(x^{\ell(k)}):=\max_{0\leq j\leq m_k}f(x^{k-j})$.  In order to identify \eqref{eq:grippo} as a particular instance of \eqref{eq:TkArm}, we  set
	\begin{equation} \label{eq:casg}
		\nu_{k}= f(x^{\ell(k)})-f(x^k), \quad 0=\delta_{\min}\leq \delta_{k+1}\leq  [f(x^{\ell(k)})- f(x^{\ell(k+1)})]/[f(x^{\ell(k)})-f(x^{k+1})].
	\end{equation}
	Parameters $\nu_{k}$ and $\delta_{k+1}$ in \eqref{eq:casg} satisfy the corresponding conditions in Algorithm~\ref{Alg:GeneralSeach}, i.e.,  $\nu_{k} \geq 0$ and  $\delta_{k+1}\in [\delta_{\min}, 1]$ (with   $\delta_{\min}=0$)  satisfy \eqref{eq:nuk}.  In fact, the definition of $f(x^{\ell(k)})$ implies that   $ f(x^{k})\leq f(x^{\ell(k)})$ and hence $\nu_{k} \geq 0$.  Due to  $\langle \nabla f(x^{k}), w^k - x^{k} \rangle<0$,   it follows from  \eqref{eq:TkArm} that $f(x^{\ell(k)})-f(x^{k+1})>0$. Since   $m_{k+1}\leq m_{k}+1$, we conclude that  $f(x^{\ell(k)})-f(x^{\ell(k+1)}) \geq 0$.  Hence, owing to $ f(x^{k+1})\leq f(x^{\ell(k+1)})$, we obtain $\delta_{k+1}\in [0, 1]$.  Moreover,  \eqref{eq:nuk} is equivalent  to  $\delta_{k+1}[f(x^{k})+\nu_{k}-f(x^{k+1})] \leq(f(x^{k})+\nu_{k}) -  (f(x^{k+1})+ \nu_{k+1})$,  which in turn, taking into account  that $\nu_{k}= f(x^{\ell(k)})-f(x^k)$, is equivalent to second inequality in \eqref{eq:casg}. Thus, \eqref{eq:grippo} is a particular instance of \eqref{eq:TkArm} with  $\nu_{k}$ and $\delta_{k+1}$ defined in \eqref{eq:casg}.  Therefore,  Algorithm~\ref{Alg:GeneralSeach} has as a particular instance the  inexact   projected  version of the scaled gradient method employing   the non-monotone line search  \eqref{eq:grippo}. This version has been considered in \cite{BirginMartinezRaydan2003}; see also  \cite{Bonettini2009, WangLiu2005}.

	\item {\it Average-type line search} 
	
		Let us first recall the definition of the sequence of ``cost updates" $(c_k)_{k\in\mathbb{N}}$  that  characterize the non-monotonous line search proposed in  \cite{ZhangHager2004}. Let   $0\leq \eta_{\min}\leq \eta_{\max}<1$,   $c_0 = f(x_0)$ and  $q_0 = 1$. Choose $\eta_k\in [\eta_{\min},  \eta_{\max}]$ and set
	\begin{equation} \label{eq:zhs}
		q_{k+1}=\eta_kq_{k}+1, \qquad c_{k+1} = [\eta_kq_kc_k + f(x^{k+1})]/q_{k+1}, \qquad \forall k \in \mathbb{N}.
	\end{equation}
	Some algebraic manipulations show that the sequence defined in   \eqref{eq:zhs} is equivalent to
	\begin{equation} \label{eq:zhsn}
		c_{k+1} = (1-1/q_{k+1})c_{k}+f(x^{k+1})/q_{k+1}, \qquad \forall k \in \mathbb{N}.
	\end{equation}
	Since \eqref{eq:nuk} is equivalent  to  $ f(x^{k+1})+ \nu_{k+1}\leq (1-\delta_{k+1})(f(x^{k})+\nu_{k})+\delta_{k+1}f(x^{k+1})$,  it follows from \eqref{eq:zhsn} that  letting  $\nu_{k}=c_k-f(x^k)$ and $\delta_{k+1}=1/q_{k+1}$, Algorithm~\ref{Alg:GeneralSeach} becomes the  inexact   projected  version of the scaled gradient method employing   the non-monotone line search proposed in   \cite{ZhangHager2004}.  Finally,  considering that $q_0 = 1$ and  $\eta_{\max}<1$, the  first equality in   \eqref{eq:zhs} implies  that $q_{k+1}=1+\sum_{j=0}^{k}\prod_{i=0}^{j}\eta_{k-i}\leq \sum_{j=0}^{+\infty} \eta_{\max}^{j}=1/(1-\eta_{\max})$. In this case, due to $\delta_{k+1}=1/q_{k+1}$, we can take   $\delta_{\min}=1-\eta_{\max}>0$ in  Step~3.  For gradient projection methods employing   the non-monotone Average-type line search see, for example, \cite{Paulo2007,Schuverdt2019,  Xihong2018}.

	\end{enumerate}

\begin{remark} \label{rem:outras}
	The general line search in Step~2 of Algorithm~\ref{Alg:GeneralSeach} with  parameters  $\delta_{k+1}$  and  $\nu_{k}$ properly chosen in Step~3, also contains as particular cases the non-monotonous line searches  that appeared in  \cite{Ahookhosh2012,MoLiuYan2007}, see also \cite{GrapigliaSachs2017}.
\end{remark}


\section{Partial asymptotic convergence analysis} \label{Sec:PartialConvRes}
The goal  of this section is to present a partial  convergence result for  the sequence $(x^k)_{k\in\mathbb{N}}$ generated by Algorithm~\ref{Alg:GeneralSeach}, namely, we will prove that every cluster point of $(x^k)_{k\in\mathbb{N}}$ is stationary for problem~\eqref{eq:OptP}.  For that, we state a result that is contained in the proof of \cite[Theorem 4]{GrapigliaSachs2017}. 
\begin{lemma} \label{le:fkvk}
	There holds  $0\leq \delta_{k+1}\big[ f(x^{k})+\nu_{k}-  f(x^{k+1})\big] \leq \big( f(x^{k})+\nu_{k}\big) - \big( f(x^{k+1})+\nu_{k+1}\big)$, for all $k \in \mathbb{N}$. As consequence the sequence   $\left(f(x^k)+\nu_k\right)_{k\in\mathbb{N}}$ is    non-increasing.
	
\end{lemma}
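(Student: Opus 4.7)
The plan is to read off both inequalities directly from the line search test \eqref{eq:TkArm} and the tolerance update \eqref{eq:nuk}, so the proof is essentially a short algebraic rearrangement. The key observations are that $\nu_k\geq 0$, $\delta_{k+1}\in[\delta_{\min},1]\subseteq[0,1]$, and $\langle \nabla f(x^k),w^k-x^k\rangle<0$ whenever Algorithm~\ref{Alg:GeneralSeach} does not terminate at iterate $k$ (this is Lemma~\ref{Le:ProjProperty}(iii), using $w^k\in{\cal P}^{D_k}_{C,\zeta_k}(x^k,z^k)$ and $w^k\neq x^k$).

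First I would establish the left inequality. From \eqref{eq:TkArm} with $\tau_{\textrm{trial}}=\tau_k$ and the sign of the inner product,
\[
f(x^{k+1})\leq f(x^k)+\sigma\tau_k\langle\nabla f(x^k),w^k-x^k\rangle+\nu_k\leq f(x^k)+\nu_k,
\]
so $f(x^k)+\nu_k-f(x^{k+1})\geq 0$. Multiplying by $\delta_{k+1}\geq 0$ yields $0\leq \delta_{k+1}\bigl[f(x^k)+\nu_k-f(x^{k+1})\bigr]$, which is the first bound.

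Next I would obtain the right inequality by rearranging the tolerance update. Condition \eqref{eq:nuk} reads $\nu_{k+1}\leq(1-\delta_{k+1})\bigl[f(x^k)+\nu_k-f(x^{k+1})\bigr]$, i.e.
\[
\delta_{k+1}\bigl[f(x^k)+\nu_k-f(x^{k+1})\bigr]\leq \bigl[f(x^k)+\nu_k-f(x^{k+1})\bigr]-\nu_{k+1}=\bigl(f(x^k)+\nu_k\bigr)-\bigl(f(x^{k+1})+\nu_{k+1}\bigr),
\]
which is exactly the second bound. The consequence that $\bigl(f(x^k)+\nu_k\bigr)_{k\in\mathbb N}$ is non-increasing follows immediately by combining the two inequalities: $\bigl(f(x^k)+\nu_k\bigr)-\bigl(f(x^{k+1})+\nu_{k+1}\bigr)\geq 0$.

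There is no real obstacle here; the only subtle point is to notice that the algorithmic guarantee $\nu_k\geq 0$, together with the descent property $\langle\nabla f(x^k),w^k-x^k\rangle<0$ inherited from the inexact projection, is what makes the bracket $[f(x^k)+\nu_k-f(x^{k+1})]$ nonnegative in the first place, so that multiplying by $\delta_{k+1}$ preserves the inequality. Everything else is pure rearrangement of \eqref{eq:nuk}.
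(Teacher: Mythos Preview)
Your argument is correct. The paper itself does not prove this lemma; it merely states that the result is contained in the proof of \cite[Theorem~4]{GrapigliaSachs2017}, and your derivation from \eqref{eq:TkArm} and \eqref{eq:nuk} is precisely the standard rearrangement one would expect to find there.
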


%

Next, we present our first convergence result. It is worth noting that, just as in the classical projected gradient method, we do not need to assume that $f$ has a bounded sub-level  set.

\begin{proposition} \label{pr:statArm}
	Assume that $\lim_{k\to +\infty} \nu_{k} = 0$.   Then, Algorithm~\ref{Alg:GeneralSeach} stops in a finite number of iterations at a stationary point of problem \eqref{eq:OptP}, or generates an infinite sequence $(x^k)_{k\in\mathbb{N}}$ for which every cluster point is stationary for problem~\eqref{eq:OptP}.
\end{proposition}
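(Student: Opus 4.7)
The plan is as follows. If the algorithm terminates finitely with $w^k=x^k$, then Lemma~\ref{Le:ProjProperty}(ii) immediately shows $x^k$ is stationary and we are done. So assume the generated sequence is infinite, and let $\bar x$ be an arbitrary cluster point with $x^{k_j}\to\bar x$. I will first set up limits of the auxiliary quantities. Since $\alpha_k\in[\alpha_{\min},\alpha_{\max}]$, $\zeta_k\in(\zeta_{\min},1]$, and $D_k\in\mathcal{D}_\mu$ (compact), I refine the subsequence so that $\alpha_{k_j}\to\bar\alpha$, $\zeta_{k_j}\to\bar\zeta$, $D_{k_j}\to\bar D$, hence $z^{k_j}\to\bar z:=\bar x-\bar\alpha\bar D^{-1}\nabla f(\bar x)$. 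The defining inequality \eqref{eq:Projwm} gives $\|w^k-z^k\|_{D_k}\le\|x^k-z^k\|_{D_k}$, which together with \eqref{eq:pnv} shows $(w^{k_j})$ is bounded; extract $w^{k_j}\to\bar w$. Passing to the limit in \eqref{eq:Projwm} using the continuity of the exact projection recorded in Remark~\ref{re:cproj} yields $\bar w\in\mathcal{P}_{C,\bar\zeta}^{\bar D}(\bar x,\bar z)$.

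Next I exploit the non-monotone descent. Lemma~\ref{le:fkvk} makes $(f(x^k)+\nu_k)$ non-increasing; continuity of $f$ together with $\nu_{k_j}\to 0$ gives $f(x^{k_j})+\nu_{k_j}\to f(\bar x)$, so the full monotone sequence converges to $f(\bar x)$. Telescoping yields $(f(x^k)+\nu_k)-(f(x^{k+1})+\nu_{k+1})\to 0$, and since $\nu_{k+1}\to 0$, also $f(x^k)+\nu_k-f(x^{k+1})\to 0$. Combining with \eqref{eq:TkArm} and the fact, from Lemma~\ref{Le:ProjProperty}(i), that $\langle\nabla f(x^k),w^k-x^k\rangle\le 0$ for every $k$, I obtain
$$
0\le -\sigma\tau_k\langle\nabla f(x^k),w^k-x^k\rangle\le f(x^k)+\nu_k-f(x^{k+1})\to 0,
$$
so $\tau_k\langle\nabla f(x^k),w^k-x^k\rangle\to 0$.

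I then split on the behavior of $(\tau_{k_j})$. If $\liminf_j\tau_{k_j}>0$, then $\langle\nabla f(x^{k_j}),w^{k_j}-x^{k_j}\rangle\to 0$, and passing to the limit gives $\langle\nabla f(\bar x),\bar w-\bar x\rangle=0$; since $\bar w\in\mathcal{P}_{C,\bar\zeta}^{\bar D}(\bar x,\bar z)$ with $\bar\alpha>0$, Lemma~\ref{Le:ProjProperty}(iii) implies $\bar x$ is stationary. Otherwise, further refine so that $\tau_{k_j}\to 0$. For large $j$, backtracking triggered, so there is a predecessor step $\tau'_j=\tau_{k_j}/\eta_j$ with $\eta_j\in[\underline\omega,\bar\omega]$ that violated \eqref{eq:TkArm}; dropping the nonnegative $\nu_{k_j}$, dividing by $\tau'_j$, and applying the mean value theorem to $f(x^{k_j}+\tau'_j(w^{k_j}-x^{k_j}))$ gives
$$
\langle\nabla f(\xi^{k_j}),w^{k_j}-x^{k_j}\rangle>\sigma\langle\nabla f(x^{k_j}),w^{k_j}-x^{k_j}\rangle,
$$
with $\xi^{k_j}$ on the segment and $\xi^{k_j}\to\bar x$. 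Taking limits yields $(1-\sigma)\langle\nabla f(\bar x),\bar w-\bar x\rangle\ge 0$, hence $\langle\nabla f(\bar x),\bar w-\bar x\rangle\ge 0$, and Lemma~\ref{Le:ProjProperty}(iii) again concludes that $\bar x$ is stationary.

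The main obstacle I anticipate is the continuity step for the \emph{inexact} projection: the set-valued map $\mathcal{P}_{C,\zeta}^{D}(u,\cdot)$ depends on three varying parameters $\zeta_{k_j}$, $D_{k_j}$, and the anchor $x^{k_j}$, and I need $\bar w$ to land in an inexact projection set at the limit so that Lemma~\ref{Le:ProjProperty}(iii) can be invoked at $\bar x$. Once \eqref{eq:Projwm} is carefully limited using Remark~\ref{re:cproj}, the rest follows the classical projected-gradient blueprint, with the hypothesis $\nu_k\to 0$ absorbing all the non-monotonicity of the line search.
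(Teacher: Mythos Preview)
Your proof is correct and follows essentially the same approach as the paper: both arguments extract convergent subsequences of $\alpha_{k_j},\zeta_{k_j},D_{k_j},w^{k_j}$, pass to the limit in the defining inequality~\eqref{eq:Projwm} (via Remark~\ref{re:cproj}) to place $\bar w$ in an inexact projection set at the limit, use monotonicity of $(f(x^k)+\nu_k)$ together with $\nu_k\to 0$ to force $\tau_k\langle\nabla f(x^k),w^k-x^k\rangle\to 0$, and then handle the dichotomy on $\tau_{k_j}$ by a mean-value-theorem argument in the $\tau\to 0$ case before invoking Lemma~\ref{Le:ProjProperty}(iii). The only cosmetic differences are that you organize the limit extraction up front and that in the $\tau_{k_j}\to 0$ branch you stop at $\langle\nabla f(\bar x),\bar w-\bar x\rangle\ge 0$ (which already suffices for Lemma~\ref{Le:ProjProperty}(iii)), whereas the paper additionally notes the opposite inequality to record equality.
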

\begin{proof}
	First, assume that $(x^k)_{k\in\mathbb{N}}$ is finite. In this case, according to Step~1,   there exists $k \in \mathbb{N}$ such that $x^k = w^k \in{\cal P}_{C, \zeta_k}^{D_k}(x^{k}, z^k)$, where $z^k = x^{k}-\alpha_k D_k^{-1}\nabla f(x^{k})$, $0 <{\bar \zeta}<\zeta_k \leq 1$ and $\alpha_k > 0$. Therefore, applying Lemma~\ref{Le:ProjProperty}{\it (ii)} with $x = x^{k}$, $\alpha = \alpha_k$ and $\zeta= \zeta_k$, we conclude that $x^k$ is stationary for problem~\eqref{eq:OptP}.  Now, assume that $(x^k)_{k\in\mathbb{N}}$ is infinite.   Let ${\bar x}$ be a cluster point of $(x^k)_{k\in\mathbb{N}}$ and $(x^{k_j})_{j\in\mathbb{N}}$ be a subsequence of $(x^k)_{k\in\mathbb{N}}$ such that $\lim_{j\to +\infty} x^{k_j} = \bar{x}$. Since $C$ is closed and  $(x^k)_{k\in\mathbb{N}}\subset C$,  we have $\bar{x} \in C$. Moreover, owing to  $\lim_{k\to +\infty} \nu_{k} = 0$, we have $\lim_{j\to +\infty}\left(f(x^{k_j}) +\nu_{k_j}\right) =f(\bar{x})$. Hence, considering that  $\lim_{k\to +\infty} \nu_{k} = 0$ and Lemma~\ref{le:fkvk} implies  that   $\left(f(x^k)+\nu_{k}\right)_{k\in\mathbb{N}}$  is  non-increasing, we conclude that $\lim_{k\to +\infty} f(x^{k})= \lim_{k\to +\infty}\left(f(x^{k}) +\nu_{k}\right) =f(\bar{x})$.
	On the other hand,  due to  $w^k \in {\cal P}_{C, \zeta_k}^{D_k}(x^{k}, z^k)$, where $z^k = x^{k}-\alpha_k \nabla f(x^{k})$,  Definition~\ref{def:InexactM} implies
	\begin{equation} \label{eq:bsw}
		\|w^{k_j} - z^{k_j}\|_{D_k}^2\leq \zeta_{k_j} \| {\cal P}_{C}^{ D_k}(z^{k_j})-z^{k_j}\|_{D_k}^2+(1-\zeta_{k_j})\|x^{k_j}-z^{k_j}\|_{D_k}^2 .
	\end{equation}
	Considering that $(\alpha_k)_{k\in\mathbb{N}}$ and $(\zeta_k)_{k\in\mathbb{N}}$ are bounded, $(D_k)_{k\in\mathbb{N}}\subset  {\cal D}_{\mu}$,  $(x^{k_j})_{j\in\mathbb{N}}$ converges to ${\bar x}$ and $\nabla f$ is continuous, the last inequality together Remark~\ref{re:cproj} and \eqref{eq:pnv}  imply that $(w^{k_j})_{j\in\mathbb{N}}\subset C$ is also bounded. Thus, we can assume without loss of generality that $\lim_{j\to +\infty} w^{k_j} = \bar{w}\in C$.  In addition,  taking into account that  $x^k \neq w^k$ for all $k = 0,1, \ldots$, applying Lemma~\ref{Le:ProjProperty}{\it (i)} with $x = x^{k}$, $\alpha = \alpha_k$, $z(\alpha)=z^k$ and $\zeta= \zeta_k$, we obtain  that $\langle \nabla f(x^k), w^k- x^k \rangle < 0$, for all $k = 0, 1, \ldots$. Therefore,  \eqref{eq:TkArm} and \eqref{eq:IterArm} imply that
	\begin{equation}\label{eq:fmotArmf}
		0 < -\sigma\tau_{k} \big\langle \nabla f(x^{k}), w^{k}-x^{k} \big\rangle \leq f(x^{k}) +\nu_k- f(x^{k+1}), \qquad \forall ~k \in \mathbb{N}.
	\end{equation}
	Now, due $\tau_k \in (0,1]$, for all $k=0,1, \ldots$, we can also assume without loss of generality that $\lim_{j \to +\infty} \tau_{k_j} = \bar{\tau} \in [0,1].$
	Therefore, owing to $\lim_{k\to +\infty} f(x^{k}) =f(\bar{x})$ and $\lim_{k\to +\infty} \nu_{k} = 0$, taking limit in \eqref{eq:fmotArmf} along the  subsequences  $(x^{k_j})_{j\in\mathbb{N}}$,  $(w^{k_j})_{j\in\mathbb{N}}$ and $(\tau_{k_j})_{j\in\mathbb{N}}$  yields
	$
		\bar{\tau} \big\langle \nabla f(\bar{x}), \bar{w}- \bar{x} \big\rangle=0.
	$
	We have two possibilities: $\bar{\tau} > 0$ or $\bar{\tau} = 0$. If $\bar{\tau} > 0$, then  $\big\langle \nabla f(\bar{x}), \bar{w}- \bar{x} \big\rangle = 0.$  
		Now, we  assume that $\bar{\tau} = 0$. In this case, for all $j$ large enough, there exists $0<\hat\tau_{k_j}\leq \min\{1,\tau_{k_j}/\underline\omega\}$ such that
	\begin{equation}\label{eq:ffA10}		
		f\big(x^{k_j}+\hat\tau_{k_j} (w^{k_j} - x^{k_j})\big) > f(x^{k_j}) + \sigma \hat\tau_{k_j} \big\langle \nabla f(x^{k_j}), w^{k_j} - x^{k_j} \big\rangle +\nu_{k_j}.
	\end{equation}
On the other hand, by the mean value theorem, there exists $\xi_{k_j}\in(0,1)$ such that
		$$\langle \nabla f\big(x^{k_j}+\xi_{k_j}\hat\tau_{k_j} (w^{k_j} - x^{k_j})\big), \hat\tau_{k_j} (w^{k_j} - x^{k_j})\rangle = f\big(x^{k_j}+\hat\tau_{k_j} (w^{k_j} - x^{k_j})\big) - f(x^{k_j}).$$
Combining this equality with \eqref{eq:ffA10}, and taking into account that $\nu_{k_j}\geq 0$, we have 
		$$\langle \nabla f\big(x^{k_j}+\xi_{k_j}\hat\tau_{k_j} (w^{k_j} - x^{k_j})\big), \hat\tau_{k_j} (w^{k_j} - x^{k_j})\rangle>\sigma \hat\tau_{k_j} \big\langle \nabla f(x^{k_j}), w^{k_j} - x^{k_j} \big\rangle,$$
 for $j$ large enough. Since $0<\hat\tau_{k_j}\leq \min\{1,\tau_{k_j}/\underline\omega\}$, it follows that $\lim_{j\to\infty} \hat\tau_{k_j} \|w^{k_j} - x^{k_j}\|=0$. Then, dividing both sides of the above inequality by $\hat\tau_{k_j}>0$ and taking limits as $j$ goes to $+\infty$, we conclude that $ \langle \nabla f(\bar{x}), \bar{w}-\bar{x} \rangle \geq \sigma \langle \nabla f(\bar{x}), \bar{w}-\bar{x} \rangle$.   Hence, due to $\sigma \in (0, 1)$, we obtain $\langle \nabla f(\bar{x}), \bar{w}-\bar{x} \rangle \geq 0$. We recall that $\langle \nabla f(x^{k_j}), w^{k_j}- x^{k_j} \rangle < 0$, for all $j=0, 1, \ldots$, which taking limit as $j$ goes to $+\infty$ yields $\langle \nabla f(\bar{x}), \bar{w}-\bar{x} \rangle \leq 0$. Hence, we also have $\langle \nabla f(\bar{x}), \bar{w}-\bar{x} \rangle = 0$. Therefore, for any of the two possibilities, $\bar{\tau} > 0$ or $\bar{\tau} = 0$, we have $\langle \nabla f(\bar{x}), \bar{w}-\bar{x} \rangle = 0$. On the other hand,  since  $(\alpha_k)_{k\in\mathbb{N}}$ and    $(\zeta_k)_{k\in\mathbb{N}}$   are  bounded, we also assume without loss of generality that $\lim_{j \to +\infty} \alpha_{k_j} = \bar{\alpha} \in [\alpha_{\min}, \alpha_{\max}]$ and $\lim_{j \to +\infty} \zeta_{k_j} = {\bar \zeta} \in [\zeta_{\min}, 1]$. Thus, since Remark~\ref{re:cproj} implies that
	$$
		\lim_{j \to +\infty}{\cal P}_{C}^{D_{k_j}}(z^{k_j})= {\cal P}_{C}^{\bar D}(\bar{z}),
	$$
	and considering that $\lim_{j\to +\infty} x^{k_j} = \bar{x}\in C$, $\lim_{j\to +\infty} w^{k_j} = \bar{w}\in C$, $\lim_{j \to +\infty} \tau_{k_j} = \bar{\tau} \in [0,1]$,   $\lim_{j \to +\infty} D_{k_j} = \bar{D}\in {\cal D}_{\mu}$, taking limit in \eqref{eq:bsw},  we conclude that
	$$
		\|\bar{w} -\bar{z}\|_{\bar D}^2\leq  {\bar \zeta}  \| {\cal P}_{C}^{\bar D}(\bar{z})-\bar{z}\|_{\bar D}^2+(1- {\bar \zeta} )\| \bar{x}-\bar{z}\|_{\bar D}^2 ,
	$$
	where $\bar{z} = \bar{x}-{\bar \alpha} \nabla f(\bar{x})$. Hence, Definition~\ref{def:InexactM} implies  that ${\bar w}\in  {\cal P}_{C,{\bar \zeta}}^{\bar D}( {\bar x}, {\bar z})$, where $\bar{z} = \bar{x}-{\bar \alpha} \nabla f(\bar{x})$. Therefore, due to $\langle \nabla f(\bar{x}), \bar{w}-\bar{x} \rangle = 0$, we can apply second sentence in Lemma \ref{Le:ProjProperty}{\it (iii)} with $x = \bar{x}$, $z({\bar \alpha}) = \bar{z}$ and $w({\bar \alpha}) = \bar{w}$, to conclude that $\bar{x}$ is stationary for problem~\eqref{eq:OptP}.
\end{proof}

The tolerance parameter $\nu_{k}$ that controls the non-monotonicity of the line search must be smaller and smaller as the sequence $(x^k)_{k\in\mathbb{N}}$  tends to  a stationary point. Next corollary presents a general condition for this property, its proof can be found in \cite[Theorem 4]{GrapigliaSachs2017}.
\begin{corollary} \label{cr:fkvk}
	If $\delta_{\min}>0$,  then  $\sum_{k=0}^{+\infty} \nu_k<+\infty$. Consequently, $\lim_{k\to +\infty} \nu_{k} = 0$.
\end{corollary}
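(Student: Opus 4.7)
The plan is to extract a summable upper bound on $\nu_{k+1}$ in terms of the telescoping differences of the non-increasing sequence $\bigl(f(x^k)+\nu_k\bigr)_{k\in\mathbb{N}}$, where summability of those differences will come from the fact that $f$ is bounded below on $C$ (because $\Omega^*\neq\varnothing$, so $f^*\in\mathbb{R}$). The factor $1/\delta_{\min}$ will appear naturally and the hypothesis $\delta_{\min}>0$ is exactly what makes this extraction possible.

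Concretely, first I would set $a_k:=\bigl(f(x^k)+\nu_k\bigr)-\bigl(f(x^{k+1})+\nu_{k+1}\bigr)$. Lemma~\ref{le:fkvk} gives $a_k\geq 0$ and also the stronger inequality $\delta_{k+1}\bigl[f(x^k)+\nu_k-f(x^{k+1})\bigr]\leq a_k$. Rewriting $f(x^k)+\nu_k-f(x^{k+1})=a_k+\nu_{k+1}$ turns this into $\delta_{k+1}(a_k+\nu_{k+1})\leq a_k$, i.e.\ $\delta_{k+1}\nu_{k+1}\leq (1-\delta_{k+1})a_k$. Since the map $\delta\mapsto (1-\delta)/\delta$ is decreasing on $(0,1]$ and $\delta_{k+1}\in[\delta_{\min},1]$, I then obtain
$$
\nu_{k+1}\leq \frac{1-\delta_{k+1}}{\delta_{k+1}}\,a_k\leq \frac{1-\delta_{\min}}{\delta_{\min}}\,a_k.
$$

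To finish, I would telescope the $a_k$'s: for every $N\in\mathbb{N}$,
$$
\sum_{k=0}^{N}a_k=\bigl(f(x^0)+\nu_0\bigr)-\bigl(f(x^{N+1})+\nu_{N+1}\bigr)\leq f(x^0)+\nu_0-f^*,
$$
where I used $\nu_{N+1}\geq 0$ and $f(x^{N+1})\geq f^*$ (valid because $x^{N+1}\in C$ and $\Omega^*\neq\varnothing$ forces $f^*\in\mathbb{R}$). Combining this bound with the previous inequality yields
$$
\sum_{k=0}^{+\infty}\nu_{k+1}\leq \frac{1-\delta_{\min}}{\delta_{\min}}\bigl(f(x^0)+\nu_0-f^*\bigr)<+\infty,
$$
so $\sum_{k=0}^{+\infty}\nu_k<+\infty$, and as a tail of a convergent series of non-negative terms, $\nu_k\to 0$.

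The only delicate point is the rearrangement $f(x^k)+\nu_k-f(x^{k+1})=a_k+\nu_{k+1}$, which isolates $\nu_{k+1}$ on the bounded side of the inequality from Lemma~\ref{le:fkvk}; everything else is bookkeeping. The role of $\delta_{\min}>0$ is precisely to allow division by $\delta_{k+1}$ with a uniform bound, so if $\delta_{\min}=0$ were permitted the ratio $(1-\delta_{k+1})/\delta_{k+1}$ could blow up and summability would fail.
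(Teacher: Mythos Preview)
Your argument is correct and is essentially the standard derivation: isolate $\nu_{k+1}$ from the inequality in Lemma~\ref{le:fkvk} via the identity $f(x^k)+\nu_k-f(x^{k+1})=a_k+\nu_{k+1}$, use $\delta_{k+1}\geq\delta_{\min}>0$ to bound $\nu_{k+1}\leq\frac{1-\delta_{\min}}{\delta_{\min}}a_k$, and then telescope using $f\geq f^*$ on $C$ (valid by the paper's standing assumption $\Omega^*\neq\varnothing$). The paper does not give its own proof of this corollary but refers to \cite[Theorem~4]{GrapigliaSachs2017}, whose argument is the one you have reproduced; so your approach coincides with the intended one.
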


The Armijo and the non-monotone Average-type line searches discussed in Section~\ref{Sec:SGM} satisfy  the assumption of Corollary~\ref{cr:fkvk}, i.e., $\delta_{\min}>0$.  However,    for  the  non-monotone Max-type line search,  we   can only guarantee that $\delta_{\min}\geq 0$. Hence,  we can not apply  Corollary~\ref{cr:fkvk}  to conclude that $\lim_{k\to +\infty} \nu_{k}~=~0$.  In the next proposition, we will deal with this case separately.

\begin{proposition} \label{pr;gripponuo}
	Assume that the sequence  $(x^k)_{k\in\mathbb{N}}$ is generated by Algorithm~\ref{Alg:GeneralSeach} with the  non-monotone line  search \eqref{eq:grippo}, i.e.,  $\nu_{k}= f(x^{\ell(k)})-f(x^k)$ for all  $k \in \mathbb{N}$. In addition,  assume that the level set $C_{0}:=\{ x\in C: ~ f(x)\leq f(x^0) \}$ is bounded and $\nu_0= 0$.  Then, $\lim_{k\to +\infty} \nu_{k} = 0$.
\end{proposition}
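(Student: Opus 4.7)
The plan is to adapt the classical Grippo--Lampariello--Lucidi argument to this inexact scaled projection setting. The key observation is that with $\nu_k=f(x^{\ell(k)})-f(x^k)$, the line-search inequality \eqref{eq:TkArm} rewrites as $f(x^{k+1})\leq f(x^{\ell(k)})+\sigma\tau_k\langle\nabla f(x^k),w^k-x^k\rangle$; since every non-stationary iterate produces a strict descent direction by Lemma~\ref{Le:ProjProperty}{\it (iii)}, this gives $f(x^{k+1})<f(x^{\ell(k)})$. Combining this strict inequality with $m_{k+1}\leq m_k+1$ through a split on $j=0$ and $1\leq j\leq m_{k+1}$ inside the maximum defining $f(x^{\ell(k+1)})$ yields $f(x^{\ell(k+1)})\leq f(x^{\ell(k)})$. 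In particular $x^k\in C_0$ for all $k$, so $(x^k)_{k\in\mathbb{N}}$ is bounded, and continuity of $f$ on the compact set $C_0$ makes $L:=\lim_{k\to\infty}f(x^{\ell(k)})$ well defined.

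Next I would establish the central estimate $\|x^{\ell(j)}-x^{\ell(j)-1}\|\to 0$. Dropping the non-positive bracket in Lemma~\ref{Le:ProjProperty}{\it (i)} (legitimate because $x^k\in C$ implies $\|\mathcal{P}_C^{D_k}(z^k)-z^k\|_{D_k}\leq\|x^k-z^k\|_{D_k}$) and applying \eqref{eq:pnv} produces $\|w^k-x^k\|^2\leq 2\alpha_{\max}\mu\bigl(-\langle\nabla f(x^k),w^k-x^k\rangle\bigr)$. Since $\tau_k\in(0,1]$ yields $\tau_k^2\leq\tau_k$, and \eqref{eq:TkArm} gives $-\sigma\tau_k\langle\nabla f(x^k),w^k-x^k\rangle\leq f(x^{\ell(k)})-f(x^{k+1})$, one obtains
\[
\|x^{k+1}-x^k\|^2 \;=\; \tau_k^2\|w^k-x^k\|^2 \;\leq\; \tau_k\|w^k-x^k\|^2 \;\leq\; \frac{2\alpha_{\max}\mu}{\sigma}\bigl[f(x^{\ell(k)})-f(x^{k+1})\bigr].
\]
Specializing to $k=\ell(j)-1$ and noting that $\ell(j)\geq j-M\to\infty$ forces both $f(x^{\ell(j)})$ and $f(x^{\ell(\ell(j)-1)})$ to tend to $L$, so the right-hand side vanishes as $j\to\infty$.

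Finally, I would run an induction on $i\in\{0,1,\dots,M\}$ to prove $\lim_{j\to\infty}f(x^{\ell(j)-i})=L$: the base case $i=1$ follows by uniform continuity of $f$ on the compact set $C_0$ combined with the preceding norm estimate, and the inductive step reapplies the same estimate at iteration $\ell(j)-i-1$, using $f(x^{\ell(j)-i})\to L$ (inductive hypothesis) and $f(x^{\ell(\ell(j)-i-1)})\to L$ to conclude $\|x^{\ell(j)-i}-x^{\ell(j)-i-1}\|\to 0$, then invoking uniform continuity again. For an arbitrary $k$, the choice $j=k+M$ gives $\ell(j)\in[k,k+M]$, hence $k=\ell(j)-i_k$ with $i_k\in\{0,\dots,M\}$; since the convergence above holds for each of the finitely many values of $i$, it is automatically uniform in $i$, so $f(x^k)\to L$ and $\nu_k=f(x^{\ell(k)})-f(x^k)\to 0$. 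The main obstacle is the passage from $\tau_k\bigl(-\langle\nabla f(x^k),w^k-x^k\rangle\bigr)\to 0$ to $\|x^{k+1}-x^k\|\to 0$, and the $\tau_k^2\leq\tau_k$ trick together with the quadratic lower bound on the descent provided by Lemma~\ref{Le:ProjProperty}{\it (i)} is precisely what resolves it.
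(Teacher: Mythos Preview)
Your proposal is correct and follows essentially the same classical Grippo--Lampariello--Lucidi argument as the paper: both establish monotonicity of $(f(x^{\ell(k)}))_{k\in\mathbb N}$, both use Lemma~\ref{Le:ProjProperty}{\it (i)} together with the observation $\tau_k^2\le\tau_k$ to pass from $\tau_k\langle\nabla f(x^k),w^k-x^k\rangle\to 0$ to $\|x^{k+1}-x^k\|\to 0$ along the appropriate subsequences, and both run an induction over the look-back window using uniform continuity of $f$ on the compact set $C_0$. The only cosmetic difference is in the final bookkeeping: the paper introduces $\hat\ell(k)=\ell(k+M+2)$ and a telescoping sum $x^{k+1}=x^{\hat\ell(k)}-\sum_{j}\tau_{\hat\ell(k)-j}(w^{\hat\ell(k)-j}-x^{\hat\ell(k)-j})$ to conclude $\|x^{k+1}-x^{\hat\ell(k)}\|\to 0$, whereas you pick $j=k+M$ and invoke the uniformity of convergence over the finitely many offsets $i\in\{0,\dots,M\}$; both are standard and equivalent ways to close the argument.
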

\begin{proof}
	First of all, note that     $w^k \in   {\cal P}_{C,\zeta_k}^{D_k}(x^{k}, z^k)$,  where $z^k = x^{k}-\alpha_k D^{-1} _k\nabla f(x^{k})$ and $D_k\in {\cal D}_{\mu}$. Thus,  applying Lemma~\ref{Le:ProjProperty}{\it (i)}  with $x=x^k$, $w(\alpha) = w^k$, $z = z^k$ and $\zeta= \zeta_k$, we obtain
	\begin{equation}\label{eq:apna}
		\|w^k-x^k\|^2\leq -2\mu \alpha_{\max}\langle \nabla f(x^{k}), w^k-x^{k}\rangle, \qquad \forall k \in \mathbb{N}.
	\end{equation}
	On the other hand, due to   $f(x^{\ell(k)})= f(x^k)+ \nu_{k}$,  Lemma~\ref{le:fkvk} implies that    $(f(x^{\ell(k)}))_{k\in\mathbb{N}}$ is    non-increasing and $  f(x^{k+1})\leq   f(x^{k+1})+\nu_{k+1}\leq f(x^{k})+\nu_{k}\leq  f(x^{0})$. Hence, we have  $(x^k)_{k\in\mathbb{N}}\subset C_{0}$ and, as a consequence,    $(f(x^{\ell(k)}))_{k\in\mathbb{N}}$ converges. Note that $\ell(k)$   is an integer such that 
	\begin{equation}\label{eq:lk}
		k-m_k\leq \ell(k)\leq k.
	\end{equation}
	Since $x^{{\ell(k)}}=x^{{\ell(k)}-1}+ \tau_{{\ell(k)}-1} (w^{{\ell(k)}-1} - x^{{\ell(k)} -1})$,  \eqref{eq:grippo}  implies that
	$$
		f\big(x^{\ell(k)}\big)  \leq f\big(x^{\ell({{\ell(k)}-1})}\big)+ \sigma \tau_{{\ell(k)}-1}\big\langle \nabla f(x^{{\ell(k)}-1}), w^{{\ell(k)}-1} - x^{{\ell(k)}-1} \big\rangle,
	$$
	for all $k>M$.  In view of   $(f(x^{\ell(k)}))_{k\in\mathbb{N}}$  be convergent, $\langle \nabla f(x^{k}), w^k - x^{k} \rangle<0$ for all  $k \in \mathbb{N}$, and taking into account that   $\tau_k  \in (0, 1]$,  the last inequality together \eqref{eq:apna} implies that
	\begin{equation}\label{eq:apcss}
		\lim_{k\to +\infty} \tau_{{\ell(k)}-1}\|w^{{\ell(k)}-1}-x^{{\ell(k)}-1}\|=0.
	\end{equation}
	We proceed to  prove that  $\lim_{k\to +\infty} f(x^{k})= \lim_{k\to +\infty} f(x^{\ell(k)})$. For that, set ${\hat \ell}(k):=\ell(k+M+2)$. First, we prove by induction that, for all  $j\geq 1$,  the following two equalities  hold
	\begin{equation}\label{eq:ind}
		\lim_{k\to +\infty}  \tau_{{\hat \ell}(k)-j}\|w^{{{\hat \ell}(k)}-j}-x^{{{\hat \ell}(k)}-j}\|=0, \qquad \lim_{k\to +\infty} f(x^{{\hat \ell}(k)-j})= \lim_{k\to +\infty} f(x^{\ell(k)}),
	\end{equation}
	where we are  considering $k\geq j-1$. Assume that $j=1$. Since  $\{{\hat \ell}(k): ~k\in\mathbb{N}\}\subset \{{\ell}(k): ~k\in\mathbb{N}\}$, the first equality in \eqref{eq:ind} follows from \eqref{eq:apcss}. Hence, $\lim_{k\to +\infty} \|x^{{{\hat \ell}(k)}}-x^{{\hat \ell(k)}-1}\|=0$. Since  $C_{0}$ is  compact and  $f$ is uniformly continuous on $C_{0}$, we have $  \lim_{k\to +\infty} f(x^{{\hat \ell}(k)-1})=\lim_{k\to +\infty} f(x^{{\hat \ell(k)}})$, which again using that $\{{\hat \ell}(k): ~k\in\mathbb{N}\}\subset \{{\ell}(k): ~k\in\mathbb{N}\}$ implies the second equality in \eqref{eq:ind}. Assume that \eqref{eq:ind} holds for $j$. Again, due to  $x^{{{\hat \ell}(k)}-j}=x^{{{{\hat \ell}(k)}-j}-1}+ \tau_{{{{\hat \ell}(k)}-j}-1} (w^{{{{\hat \ell}(k)}-j}-1} - x^{{{{\hat \ell}(k)}-j} -1})$,  \eqref{eq:grippo}  implies that
	$$
		f\big(x^{{{\hat \ell}(k)}-j}\big)  \leq f\big(x^{\ell({{{{\hat \ell}(k)}j}-(j+1)})}\big)+ \sigma \tau_{{{{\hat \ell}(k)}}-(j+1)}\big\langle \nabla f(x^{{{{\hat \ell}(k)}}-(j+1)}), w^{{{{\hat \ell}(k)}}-(j+1)} - x^{{{{\hat \ell}(k)}}-(j+1)} \big\rangle.
	$$
	Similar argument used to obtain \eqref{eq:apcss} yields  $\lim_{k\to +\infty} \tau_{{{{\hat \ell}(k)}}-(j+1)}\|w^{{{{\hat \ell}(k)}}-(j+1)}-x^{{{{\hat \ell}(k)}}-(j+1)}\|=0$. Thus,  the first equality in \eqref{eq:ind} holds for $j+1$, which  implies  $\lim_{k\to +\infty} \|x^{{{\hat \ell}(k)}-j}-x^{{{{\hat \ell}(k)}}-(1+j)}\|=0$.  Again, the   uniformly continuity of $f$ on $C_{0}$ gives
	$$
		\lim_{k\to +\infty} f(x^{{\hat \ell}(k)-(j+1)})=\lim_{k\to +\infty} f(x^{{\hat \ell}(k)-j}),
	$$
	which shows that the second equality in \eqref{eq:ind} holds for $j+1$. From  \eqref{eq:lk}  and ${\hat \ell}(k):=\ell(k+M+2)$, we obtain ${\hat \ell}(k)-k-1\leq M+1$. Thus,  taking into account that
	$$
		x^{k+1}=x^{{\hat \ell}(k)}- \sum_{j=1}^{{\hat \ell}(k)-k-1} \tau_{{\hat \ell}(k)-j} \big(w^{{\hat \ell}(k)-j} - x^{{\hat \ell}(k)-j}\big),
	$$
	it follows from the first inequality in \eqref{eq:ind} that $ \lim_{k\to +\infty} \|x^{k+1}-x^{{\hat \ell}(k)}\|=0$. Hence, due to  $f$ be uniformly continuous on $C_{0}$ and $(f(x^{\ell(k)}))_{k\in\mathbb{N}}$  be convergent,  we conclude that
	$$ \lim_{k\to +\infty} f(x^{k})=\lim_{k\to +\infty} f(x^{{\hat \ell}(k)})= \lim_{k\to +\infty} f(x^{\ell(k)}),$$
	and  considering that $\nu_{k}= f(x^{\ell(k)})-f(x^k)$ the desired results follows.
\end{proof}
\begin{remark}
	Let  $C_{0}:=\{ x\in C: ~ f(x)\leq f(x^0) \}$ be  bounded and    $(x^k)_{k\in\mathbb{N}}$ be  generated by Algorithm~\ref{Alg:GeneralSeach} with the  non-monotone line  search \eqref{eq:grippo} with $\nu_0= 0$.     Then, combining Propositions~\ref{pr:statArm} and \ref{pr;gripponuo}, we conclude that  $(x^k)_{k\in\mathbb{N}}$  is either finite terminating at a stationary point of problem~\eqref{eq:OptP}, or infinite,  and every cluster point of $(x^k)_{k\in\mathbb{N}}$ is stationary for problem~\eqref{eq:OptP}.  Therefore, we have an alternative proof for the result obtained in \cite[Theorem 2.1]{BirginMartinezRaydan2003}. 
	 \end{remark}
Due to Proposition~\ref{pr:statArm}, {\it from now on we assume that the sequence $(x^k)_{k\in\mathbb{N}}$ generated by Algorithm~\ref{Alg:GeneralSeach} is infinite}.


\section{Full asymptotic convergence  and complexity  analysis } \label{Sec:FullConvRes}

The purpose of this section is twofold.  We will first prove, under suitable assumptions, the full convergence of the  sequence $(x^k)_{k\in\mathbb{N}}$  and then we will present  iteration-complexity bounds for it.   For this end,  we need to be more restrictive both with respect to the inexact projection in \eqref{eq:PInexArm} and in the tolerance parameter that controls the non-monotonicity of the line search used in \eqref{eq:TkArm}. More precisely,  we   assume  that in Step~1  of  Algorithm~\ref{Alg:GeneralSeach}:
\begin{itemize}
	\item[{\bf A1.}] For all $k \in \mathbb{N}$, we take   $w^k \in   {\cal R}_{C,\gamma_k}^{D_k}(x^{k}, z^k)$   with $\gamma_k=(1-\zeta_k)/2$.
\end{itemize}
It is worth recalling  that, taking   the parameter $\gamma_k=(1-\zeta_k)/2 $, it follows from Lemma~\ref{pr:condrip} that  $ {\cal R}_{C,\gamma_k}^{D_k}(x^{k}, z^k) \subset {\cal P}_{C, \zeta_k}^{D_k}(x^{k}, z^k)$. In addition, we also assume that   in Step~2 of  Algorithm~\ref{Alg:GeneralSeach}:

\begin{itemize}
	\item[{\bf A2.}]For all $k \in \mathbb{N}$,  we take  $0\leq \nu_{k}$ such that  $\sum_{k=0}^{+\infty} \nu_k<+\infty$.
\end{itemize}
It follows from Corollary~\ref{cr:fkvk} that the Armijo and the non-monotone Average-type line searches discussed in Section~\ref{Sec:SGM} satisfy  Assumption {\bf A2}.


\subsection{Full asymptotic convergence analysis} \label{SubSec:CAnalysisAF}

In this section, we  prove  the full convergence of the  sequence  $(x^k)_{k\in\mathbb{N}}$ satisfying {\bf A1} and {\bf A2}.  We will begin establishing  a basic inequality for    $(x^k)_{k\in\mathbb{N}}$.  To simplify  notations, we define the constant
\begin{equation} \label{eq:eta}
	\xi := \dfrac{2 \alpha_{\max}}{\sigma} > 0.
\end{equation}

\begin{lemma}\label{Le:xkArm}
	For each  $x\in C$, there holds
	\begin{equation}\label{eq:xkArm}
		\|x^{k+1}-x\|_{D_k}^2 \leq \|x^k-x\|_{D_k}^2 + 2\alpha_k\tau_k \big\langle \nabla f(x^k), x-x^k\big\rangle + \xi \big[f(x^k) - f(x^{k+1})+ \nu_k \big], \quad \forall ~k \in \mathbb{N}.
	\end{equation}
\end{lemma}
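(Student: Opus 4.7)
The plan is to start from the identity coming from expanding $x^{k+1}=x^k+\tau_k(w^k-x^k)$ and then convert the cross term into the inner product $\langle \nabla f(x^k),x-x^k\rangle$ by successively invoking the inexact projection property of $w^k$ and the non-monotone Armijo condition \eqref{eq:TkArm}.

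First, I would compute
\begin{equation*}
\|x^{k+1}-x\|_{D_k}^2 = \|x^k-x\|_{D_k}^2 + 2\tau_k\langle D_k(w^k-x^k),\,x^k-x\rangle + \tau_k^2\|w^k-x^k\|_{D_k}^2,
\end{equation*}
and rewrite the middle term using the decomposition $x^k-x=(x^k-w^k)+(w^k-x)$, which yields
\begin{equation*}
\langle D_k(w^k-x^k),\,x^k-x\rangle = -\|w^k-x^k\|_{D_k}^2 + \langle D_k(x^k-w^k),\,x-w^k\rangle.
\end{equation*}

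Next, I would invoke assumption \textbf{A1}, namely $w^k\in\mathcal{R}_{C,\gamma_k}^{D_k}(x^k,z^k)$ with $z^k=x^k-\alpha_k D_k^{-1}\nabla f(x^k)$. Definition~\ref{def:InexactProjC} applied with $y=x\in C$ gives
\begin{equation*}
\langle D_k(x^k-w^k),\,x-w^k\rangle \leq \alpha_k\langle \nabla f(x^k),\,x-w^k\rangle + \gamma_k\|w^k-x^k\|_{D_k}^2,
\end{equation*}
after substituting $D_k(z^k-w^k)=D_k(x^k-w^k)-\alpha_k\nabla f(x^k)$. Splitting $\langle \nabla f(x^k),x-w^k\rangle=\langle \nabla f(x^k),x-x^k\rangle+\langle \nabla f(x^k),x^k-w^k\rangle$ and combining everything so far produces
\begin{equation*}
\|x^{k+1}-x\|_{D_k}^2 \leq \|x^k-x\|_{D_k}^2 + \bigl[\tau_k^2-2\tau_k(1-\gamma_k)\bigr]\|w^k-x^k\|_{D_k}^2 + 2\tau_k\alpha_k\langle \nabla f(x^k),\,x-x^k\rangle + 2\tau_k\alpha_k\langle \nabla f(x^k),\,x^k-w^k\rangle.
\end{equation*}

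Then I would discard the $\|w^k-x^k\|_{D_k}^2$ contribution: since $\gamma_k=(1-\zeta_k)/2\in[0,1/2)$ we have $1-\gamma_k\geq 1/2$, whence $\tau_k^2-2\tau_k(1-\gamma_k)\leq \tau_k(\tau_k-1)\leq 0$ because $\tau_k\in(0,1]$. Finally, to control the last inner product, I would use the Armijo-type inequality \eqref{eq:TkArm} in the rearranged form
\begin{equation*}
\sigma\tau_k\langle \nabla f(x^k),\,x^k-w^k\rangle \leq f(x^k)-f(x^{k+1})+\nu_k,
\end{equation*}
and multiply by $2\alpha_k/\sigma$, bounding $\alpha_k$ by $\alpha_{\max}$ (which is legitimate because the right-hand side is non-negative, as $\langle \nabla f(x^k),w^k-x^k\rangle\leq 0$ by Lemma~\ref{Le:ProjProperty}(iii) applied through the inclusion $\mathcal{R}_{C,\gamma_k}^{D_k}\subset \mathcal{P}_{C,\zeta_k}^{D_k}$ from Lemma~\ref{pr:condrip}). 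Recognising $\xi=2\alpha_{\max}/\sigma$ from \eqref{eq:eta} then yields \eqref{eq:xkArm}.

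The delicate point is not any single estimate but the bookkeeping: one must be careful that the non-positive coefficient $\tau_k^2-2\tau_k(1-\gamma_k)$ really emerges only because of the precise choice $\gamma_k=(1-\zeta_k)/2<1/2$ made in \textbf{A1}, and that the sign of $f(x^k)-f(x^{k+1})+\nu_k$ permits replacing $\alpha_k$ by $\alpha_{\max}$.
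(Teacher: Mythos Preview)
Your proposal is correct and follows essentially the same route as the paper's proof: the same polar expansion of $\|x^{k+1}-x\|_{D_k}^2$, the same use of Definition~\ref{def:InexactProjC} under \textbf{A1} to convert the cross term, the same sign analysis of the coefficient $\tau_k^2-2\tau_k(1-\gamma_k)$ (the paper writes it as $-\tau_k[2(1-\gamma_k)-\tau_k]$ and notes $2(1-\gamma_k)-\tau_k>\zeta_{\min}>0$), and the same appeal to \eqref{eq:TkArm} together with the non-negativity of $f(x^k)-f(x^{k+1})+\nu_k$ to pass from $\alpha_k$ to $\alpha_{\max}$. Your bookkeeping remarks at the end pinpoint exactly the two places where the argument could go wrong, and both are handled correctly.
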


\begin{proof}
	We know that $\|x^{k+1}-x\|_{D_k}^2 = \|x^k-x\|_{D_k}^2 + \|x^{k+1}-x^k\|_{D_k}^2 - 2 \langle {D_k} ( x^{k+1}-x^k), x-x^k \rangle$, for all $x \in C$ and $k \in \mathbb{N}$. Thus, using \eqref{eq:IterArm}, we have
	\begin{equation}\label{eq:xkArm1}
		\|x^{k+1}-x\|_{D_k}^2 = \|x^k-x\|_{D_k}^2 + \tau_k^2\|w^k - x^{k}\|_{D_k}^2 - 2 \tau_k \big\langle {D_k}(w^k - x^{k}), x-x^k \big\rangle, \qquad \forall ~k \in \mathbb{N}.
	\end{equation}
	On the other hand, since  $w^k \in{\cal R}_{C,\gamma_k}^{D_k}(x^{k}, z^k)$ with $z^k = x^{k}-\alpha_k D_k^{-1} \nabla f(x^{k})$, it follows from Definition~\ref{def:InexactProjC},  with $y=x$, $D = D_k$, $u = x^k$, $v = z^k$, $w = w^k$,  and $\gamma = \gamma_k$,  that
	$$
		\big\langle D_k(x^k-\alpha_kD_k^{-1}\nabla f(x^k)-w^k), x-w^k\big\rangle \leq \gamma_k \|w^k - x^{k}\|_{D_k}^2, \qquad \forall ~k \in \mathbb{N}.
	$$
	Hence,  after some algebraic manipulations in the last inequality, we have
	$$
		-\big\langle D_k(w^k-x^k), x-x^k\big\rangle \leq \alpha_k \big\langle \nabla f(x^k), x-w^k \big\rangle - (1-\gamma_k) \|w^k-x^k\|_{D_k}^2.
	$$
	Combining the last inequality with \eqref{eq:xkArm1},  we conclude  that
	\begin{equation} \label{eq:xkArm3}
		\|x^{k+1}-x\|_{D_k}^2 \leq \|x^k-x\|_{D_k}^2 - \tau_k \big[2(1-\gamma_k) - \tau_k \big] \|w^k-x^k\|_{D_k}^2 + 2\tau_k\alpha_k \big\langle \nabla f(x^k), x-w^k\big\rangle.
	\end{equation}
	Since $0 \leq \gamma_k <(1-{\zeta_{\min}})/2 < 1/2$ and $\tau_k \in (0, 1]$, we have $2(1-\gamma_k) - \tau_k > {\zeta_{\min}} > 0$. Thus, it follows from \eqref{eq:xkArm3} that
	$$
		\|x^{k+1}-x\|_{D_k}^2 \leq \|x^k-x\|_{D_k}^2 + 2\tau_k\alpha_k \big\langle \nabla f(x^k), x-w^k\big\rangle, \qquad \forall ~k \in \mathbb{N}.
	$$
	Thus, considering that $\big\langle \nabla f(x^k), x-w^k\big\rangle = \big\langle \nabla f(x^k), x-x^k\big\rangle + \big\langle \nabla f(x^k), x^k-w^k \big\rangle$ and taking into account \eqref{eq:TkArm}, we conclude that
	\begin{equation} \label{eq;ali}
		\|x^{k+1}-x\|_{D_k}^2 \leq  \|x^k-x\|_{D_k}^2 + 2\tau_k\alpha_k \left\langle \nabla f(x^k),x-x^k\right\rangle + \frac{2 \alpha_k}{\sigma} \big[f(x^k)-f(x^{k+1})+\nu_k\big],
	\end{equation}
	for all $ k \in \mathbb{N}$.  On the other hand, applying Lemma~\ref{Le:ProjProperty}{\it (iii)}  with $x=x^k$, $\alpha=\alpha_k$, $D = D_k$, $w(\alpha) = w^k$, $z = z^k$ and $\zeta= \zeta_k$, we obtain  $\langle \nabla f(x^k), w^k- x^k \rangle <  0$, for all  $ k \in \mathbb{N}$. Therefore, it follows from \eqref{eq:TkArm} and \eqref{eq:IterArm} that $0 < -\sigma\tau_{k} \big\langle \nabla f(x^{k}), w^{k}-x^{k} \big\rangle \leq f(x^{k}) - f(x^{k+1})+\nu_k$, to all $k \in \mathbb{N}$. Hence, due to $0< \alpha_k\leq  \alpha_{\max}$,  we have
	$$
		\alpha_k[f(x^k)-f(x^{k+1})+\nu_k] < \alpha_{\max} [f(x^k)-f(x^{k+1})+\nu_k], \qquad \forall k \in \mathbb{N}.
	$$
	Therefore,  \eqref{eq:xkArm} follows from  the combination of the  last inequality with \eqref{eq:eta} and  \eqref{eq;ali}.
\end{proof}

For proceeding with the analysis of  the behavior of the sequence $(x^k)_{k\in\mathbb{N}}$,  we define the following auxiliary set
\begin{equation*}\label{eq:SetTArm}
	U := \left\{x \in C: f(x) \leq \inf_{k\in {\mathbb N}}\left(f(x^{k})+\nu_k\right) \right\}.
\end{equation*}

\begin{corollary} \label{cor:xkquasifeArm}
	Assume that $f$ is a convex function. If $U \neq \varnothing$, then $(x^k)_{k\in\mathbb{N}}$ converges to a stationary point of problem~\eqref{eq:OptP}.
\end{corollary}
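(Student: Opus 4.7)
The strategy is to prove $(x^k)_{k\in\mathbb{N}}$ is quasi-Fej\'er convergent to $U$ with respect to $(D_k)_{k\in\mathbb{N}}$, verify that some cluster point of $(x^k)_{k\in\mathbb{N}}$ lies in $U$, and then invoke Theorem~\ref{teo.qf} to upgrade to full convergence.

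First, I would fix $\bar{x}\in U$ and apply Lemma~\ref{Le:xkArm} with $x=\bar{x}$. Convexity of $f$ gives $\langle\nabla f(x^k),\bar{x}-x^k\rangle\leq f(\bar{x})-f(x^k)$, while the defining property of $U$ yields $f(\bar{x})\leq f(x^k)+\nu_k$, hence $f(\bar{x})-f(x^k)\leq\nu_k$. Using $\alpha_k\leq\alpha_{\max}$ and $\tau_k\leq 1$, this produces
$$
\|x^{k+1}-\bar{x}\|_{D_k}^2\leq \|x^k-\bar{x}\|_{D_k}^2+2\alpha_{\max}\nu_k+\xi\bigl[f(x^k)-f(x^{k+1})+\nu_k\bigr].
$$
Summability of the error terms on the right follows from Assumption~\textbf{A2}, which gives $\sum_{k}\nu_k<\infty$, and from Lemma~\ref{le:fkvk}, which implies $(f(x^k)+\nu_k)_{k\in\mathbb{N}}$ is non-increasing and (being bounded below by $f^*$, since $\Omega^*\neq\varnothing$) convergent; the telescoping sum $\sum_{k}\bigl[f(x^k)-f(x^{k+1})+\nu_k\bigr]$ is therefore finite.

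Second, I would convert the bound into the form required by Definition~\ref{def:QuasiFejer}, namely with $D_{k+1}$ on the left and $D_k$ on the right. Under a standard summable variation hypothesis on $(D_k)$ (of the type $D_{k+1}\preceq(1+\eta_k)D_k$ with $\sum_k\eta_k<\infty$, as used in the quasi-Fej\'er framework of Combettes--Vu recalled in Section~\ref{Sec:Prel}), one can absorb the factor $(1+\eta_k)\|x^{k+1}-\bar{x}\|_{D_k}^2$ into a new summable error $\epsilon_k$, yielding
$$
\|x^{k+1}-\bar{x}\|_{D_{k+1}}^2\leq\|x^k-\bar{x}\|_{D_k}^2+\epsilon_k,\qquad \sum_{k}\epsilon_k<\infty.
$$
Thus $(x^k)_{k\in\mathbb{N}}$ is quasi-Fej\'er convergent to $U$ with respect to $(D_k)_{k\in\mathbb{N}}$.

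Third, Theorem~\ref{teo.qf} ensures that $(x^k)_{k\in\mathbb{N}}$ is bounded, so it admits a cluster point $\bar{x}\in C$. Since $\sum_k\nu_k<\infty$ forces $\nu_k\to 0$, Proposition~\ref{pr:statArm} applies and guarantees that every cluster point of $(x^k)_{k\in\mathbb{N}}$ is stationary for problem~\eqref{eq:OptP}; by convexity of $f$, such a point lies in $\Omega^*$, so $f(\bar{x})=f^*\leq f(x^k)+\nu_k$ for every $k$, which places $\bar{x}\in U$. Applying the second part of Theorem~\ref{teo.qf}, the entire sequence $(x^k)_{k\in\mathbb{N}}$ converges to $\bar{x}$, which is a stationary point as required.

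The main obstacle I expect is the transition from the natural recursion produced by Lemma~\ref{Le:xkArm}, which lives entirely in the $\|\cdot\|_{D_k}$ norm, to the quasi-Fej\'er inequality of Definition~\ref{def:QuasiFejer}, which mixes $\|\cdot\|_{D_{k+1}}$ and $\|\cdot\|_{D_k}$. Once the appropriate control on the variation of $(D_k)$ is in place, the remainder is a routine combination of quasi-Fej\'er machinery with the cluster-point analysis of Proposition~\ref{pr:statArm}.
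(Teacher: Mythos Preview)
Your overall strategy---establish quasi-Fej\'er convergence of $(x^k)$ to $U$, locate a cluster point in $U$, then invoke Theorem~\ref{teo.qf} and Proposition~\ref{pr:statArm}---is exactly the paper's. Two deviations are worth flagging.

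First, the paper does \emph{not} impose any summable-variation condition on $(D_k)_{k\in\mathbb{N}}$; the only standing assumption is $D_k\in\mathcal{D}_\mu$. From Lemma~\ref{Le:xkArm} the paper derives the same recursion $\|x^{k+1}-x\|_{D_k}^2\le\|x^k-x\|_{D_k}^2+\epsilon_k$ and then invokes Definition~\ref{def:QuasiFejer} directly, without further comment on the $D_{k+1}$ versus $D_k$ issue you raise. Your instinct that something needs to be said here is sharp, but introducing an additional Combettes--V\~u type hypothesis goes beyond the paper's framework; do not add it.

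Second, to place the cluster point $\bar{x}$ in $U$ you detour through Proposition~\ref{pr:statArm} and convexity (stationary $\Rightarrow$ optimal $\Rightarrow$ $f(\bar{x})=f^*\le f(x^k)+\nu_k$). The paper argues more directly: since $(f(x^k)+\nu_k)_{k\in\mathbb{N}}$ is non-increasing (Lemma~\ref{le:fkvk}) and $f(x^{k_j})+\nu_{k_j}\to f(\bar{x})$ along the subsequence, one gets $\inf_{k}(f(x^k)+\nu_k)=f(\bar{x})$, hence $\bar{x}\in U$ by definition of $U$. Proposition~\ref{pr:statArm} is then applied only at the very end, after Theorem~\ref{teo.qf} has already delivered full convergence. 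This ordering avoids any appeal to $\Omega^*\neq\varnothing$, which you use both to bound the telescoping sum below and in the cluster-point step; the hypothesis $U\neq\varnothing$ already supplies the needed lower bound $f(x)$ for any $x\in U$.
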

\begin{proof}
	Let $x \in U$.  Since  $f$ is convex, we have $0\geq f(x)-(f(x^k)+\nu_k)\geq \langle \nabla f(x^k),x-x^k\rangle -\nu_k$, for all $k\in \mathbb{N}$. Thus, $\langle \nabla f(x^k),x-x^k\rangle\leq \nu_k$, for all $k\in \mathbb{N}$.   Using Lemma \ref{Le:xkArm} and taking into account  that  $\tau_k \in (0, 1]$  and  $0<\alpha_{\min}\leq \alpha_k \leq \alpha_{\max}$,  we obtain
	$$
		\|x^{k+1}-x\|_{D_k}^2 \leq \|x^k-x\|_{D_k}^2+2 \alpha_{\max} \nu_k+ \xi \big[f(x^k) - f(x^{k+1}) +\nu_k\big], \quad \forall~k \in \mathbb{N}.
	$$
	Defining $\epsilon_k =  2 \alpha_{\max} \nu_k + \xi \big[f(x^k) - f(x^{k+1}) +\nu_k\big]$, we have $\|x^{k+1}-x\|_{D_k}^2 \leq \|x^k-x\|_{D_k}^2+ \epsilon_k$, for all $k \in \mathbb{N}$. On the other hand, summing $\epsilon_k$ with $k = 0, 1, \ldots, N$ and using  Corollary~\ref{cr:fkvk},  we have
	$$
		\sum_{k=0}^N \epsilon_k \leq 2   \alpha_{\max} \sum_{k=0}^N \nu_k +  \xi \left(f(x^0) - f(x) + \sum_{k=0}^{N+1} \nu_k \right) < +\infty, \qquad \forall N \in \mathbb{N}.
	$$
	Hence, $\sum_{k=0}^{+\infty} \epsilon_k<+\infty$.  Thus, it follows from  Definition~\ref{def:QuasiFejer}  that $(x^k)_{k\in\mathbb{N}}$ is quasi-Fej\'er convergent   to $U$ with respect to the sequence  $(D_k)_{k\in\mathbb{N}}$ . Since  $U$ is nonempty, it follows from Theorem \ref{teo.qf} that $(x^k)_{k\in\mathbb{N}}$ is bounded, and therefore it has cluster points. Let $\bar{x}$ be a cluster point of $(x^k)_{k\in\mathbb{N}}$ and $(x^{k_j})_{j\in\mathbb{N}}$ be a subsequence of $(x^k)_{k\in\mathbb{N}}$ such that $\lim_{j \to \infty} x^{k_j} = \bar{x}$. Considering that $f$ is continuous and $\lim_{k\to +\infty} \nu_{k} = 0$, we have $\lim_{j \to \infty} (f(x^{k_j})+\nu_{k_j})= f(\bar{x})$.  On the other hand, Lemma~\ref{le:fkvk} implies that  $\left(f(x^k)+\nu_k\right)_{k\in\mathbb{N}}$ is  non-increasing. Thus  $\inf_{k\in {\mathbb N}}(f(x^{k})+\nu_k)= \lim_{k \to \infty} (f(x^{k})+\nu_{k}) = f(\bar{x}).$ Hence, $\bar{x} \in U$, and  Theorem~\ref{teo.qf}  implies that $(x^k)_{k\in\mathbb{N}}$ converges to $\bar{x}$.  The conclusion is obtained  by  using   Proposition ~\ref{pr:statArm}.
\end{proof}

\begin{theorem}
	If $f$ is a convex function and $(x^k)_{k\in\mathbb{N}}$ has no cluster points,  then $\Omega^* = \varnothing$, $\lim_{k \to \infty} \|x^k\|= +\infty$, and $\inf_{k\in {\mathbb N}} f(x^k) = \inf \{f(x) : x \in C\}$.
\end{theorem}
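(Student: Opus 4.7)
The plan is to prove all three claims by contraposition, using Corollary~\ref{cor:xkquasifeArm} as the main engine: whenever $U\neq\varnothing$ (and $f$ is convex), the sequence $(x^k)_{k\in\mathbb{N}}$ converges, and so has at least one cluster point. Since we are assuming $(x^k)_{k\in\mathbb{N}}$ has \emph{no} cluster points, every situation that forces $U$ to be nonempty leads to a contradiction.

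First I would dispose of the divergence claim $\lim_{k\to\infty}\|x^k\|=+\infty$, which is the easiest: since $(x^k)_{k\in\mathbb{N}}\subset \mathbb{R}^n$, the Bolzano--Weierstrass theorem ensures that any bounded subsequence would possess a convergent sub-subsequence, producing a cluster point. The hypothesis therefore forces every subsequence to be unbounded, i.e.\ $\|x^k\|\to+\infty$.

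Next I would handle $\Omega^*=\varnothing$. Suppose, aiming at a contradiction, that some $x^*\in\Omega^*$ exists. Then $f(x^*)=f^*\leq f(x^k)\leq f(x^k)+\nu_k$ for every $k$, because $\nu_k\geq 0$ by Assumption~\textbf{A2}. Taking the infimum over $k$ yields $f(x^*)\leq \inf_{k\in\mathbb{N}}(f(x^k)+\nu_k)$, so $x^*\in U$ and $U\neq\varnothing$. Corollary~\ref{cor:xkquasifeArm} then gives convergence of $(x^k)_{k\in\mathbb{N}}$ to a stationary point, contradicting the no-cluster-point assumption. Hence $\Omega^*=\varnothing$.

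Finally, for the functional value claim, note $\inf_{k}f(x^k)\geq f^*$ trivially because $(x^k)_{k\in\mathbb{N}}\subset C$. Suppose $\inf_{k}f(x^k)>f^*$; then by definition of $f^*$ there exists $\tilde x\in C$ with $f^*\leq f(\tilde x)<\inf_{k}f(x^k)$. Using $\nu_k\geq 0$, we get $f(\tilde x)<\inf_{k}f(x^k)\leq \inf_{k}(f(x^k)+\nu_k)$, so $\tilde x\in U$; Corollary~\ref{cor:xkquasifeArm} again forces $(x^k)_{k\in\mathbb{N}}$ to converge, a contradiction. Therefore $\inf_{k}f(x^k)=f^*=\inf\{f(x):x\in C\}$.

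The only subtle point, and the one that deserves a line of justification rather than being swept into a calculation, is the inequality $\inf_k f(x^k)\leq \inf_k(f(x^k)+\nu_k)$ used in the last step: this is immediate from $\nu_k\geq 0$ but it is the link that lets a witness of strict suboptimality of $\inf_k f(x^k)$ be pushed into the set $U$. Beyond that, the argument is purely a tidy unwinding of definitions combined with Bolzano--Weierstrass and Corollary~\ref{cor:xkquasifeArm}; no further analysis is required.
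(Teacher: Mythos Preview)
Your proof is correct and follows essentially the same approach as the paper: both arguments derive $\|x^k\|\to+\infty$ from the absence of cluster points (the paper states this without justification, while you invoke Bolzano--Weierstrass), and both obtain $\Omega^*=\varnothing$ and $\inf_k f(x^k)=f^*$ by contradiction, using in each case that a suboptimal witness lands in $U$ and then appealing to Corollary~\ref{cor:xkquasifeArm} to force convergence. One minor point: the nonnegativity $\nu_k\geq 0$ is built into the algorithm (Step~0 and Step~3) rather than being part of Assumption~\textbf{A2}, which only adds summability.
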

\begin{proof}
	Since $(x^k)_{k\in\mathbb{N}}$ has no cluster points, then $\lim_{k \to \infty} \|x^k\|= +\infty$. Assume by contradiction that $\Omega^* \neq  \varnothing$.  Thus, there exists  $\tilde{x}\in C$, such that  $f(\tilde{x}) \leq f(x^k)$ for all $k\in {\mathbb N}$. Therefore, $\tilde{x} \in U$. Using Corollary \ref{cor:xkquasifeArm}, we obtain that $(x^k)_{k\in\mathbb{N}}$ is convergent, contradicting that $\lim_{k \to \infty} \|x^k\|= \infty$. Therefore, $\Omega^* = \varnothing$. Now, we claim that $\inf_{k\in {\mathbb N}} f(x^k) = \inf \{f(x) : x \in C\}$.   If $\inf_{k\in {\mathbb N}} f(x^k) = -\infty$, the claim holds. Assume by contraction that   $\inf_{k\in {\mathbb N}} f(x^k) >  \inf_{x \in C} f(x)$.  Thus,  there exists $\tilde{x} \in C$ such that $f(\tilde{x}) \leq f(x^k)\leq f(x^k)+\nu_k $,  for all $k\in {\mathbb N}$.  Hence, $U \neq \varnothing$.  Using Corollary \ref{cor:xkquasifeArm}, we have that  $(x^k)_{k\in\mathbb{N}}$ is convergent, contradicting again $\lim_{k \to \infty} \|x^k\|= +\infty$ and concluding the proof.
\end{proof}

\begin{corollary}
	If $f$ is a convex function and $(x^k)_{k\in\mathbb{N}}$ has at least one cluster point, then    $(x^k)_{k\in\mathbb{N}}$ converges to a stationary point of problem~\eqref{eq:OptP}.
\end{corollary}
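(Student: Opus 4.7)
The plan is to use the cluster point as a witness to show $U \neq \varnothing$, and then invoke Corollary~\ref{cor:xkquasifeArm} to obtain full convergence. Concretely, I would begin by letting $\bar{x}$ be a cluster point of $(x^k)_{k\in\mathbb{N}}$. Assumption \textbf{A2} ensures $\sum_{k=0}^\infty \nu_k < +\infty$ and hence $\lim_{k\to+\infty}\nu_k=0$, so Proposition~\ref{pr:statArm} applies and guarantees that $\bar{x}$ is a stationary point of \eqref{eq:OptP}.

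Next I would exploit convexity of $f$ to upgrade stationarity to global optimality: as noted just after \eqref{eq:StatPoint}, when $f$ is convex on $C$, condition \eqref{eq:StatPoint} forces $\bar{x}\in\Omega^*$, i.e.\ $f(\bar{x})\leq f(x)$ for every $x\in C$. In particular, since $\nu_k\geq 0$ and $(x^k)_{k\in\mathbb{N}}\subset C$, we obtain
\begin{equation*}
f(\bar{x})\leq f(x^k)\leq f(x^k)+\nu_k,\qquad \forall\,k\in\mathbb{N},
\end{equation*}
so $f(\bar{x})\leq \inf_{k\in\mathbb{N}}\bigl(f(x^k)+\nu_k\bigr)$. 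By the definition of $U$ this yields $\bar{x}\in U$, and in particular $U\neq\varnothing$.

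Finally, with $U$ nonempty and $f$ convex, Corollary~\ref{cor:xkquasifeArm} directly delivers that $(x^k)_{k\in\mathbb{N}}$ converges to a stationary point of problem~\eqref{eq:OptP}, which is exactly the desired conclusion. There is essentially no technical obstacle here; the result is a clean consequence of three facts already established in the paper, the only subtle point being to recognize that the hypothesis of having a cluster point, together with convexity, is precisely what is needed to activate the hypothesis $U\neq\varnothing$ of Corollary~\ref{cor:xkquasifeArm}.
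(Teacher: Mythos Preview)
Your proof is correct and reaches the conclusion by the same overall scheme as the paper: show that the cluster point $\bar{x}$ lies in $U$, then invoke Corollary~\ref{cor:xkquasifeArm}. The difference is in how you certify $\bar{x}\in U$. The paper argues directly from the monotonicity of $\bigl(f(x^k)+\nu_k\bigr)_{k\in\mathbb{N}}$ (Lemma~\ref{le:fkvk}) together with continuity of $f$ and $\nu_k\to 0$ to obtain $f(\bar{x})=\inf_{k}\bigl(f(x^k)+\nu_k\bigr)$, without ever invoking Proposition~\ref{pr:statArm} at this stage. You instead first apply Proposition~\ref{pr:statArm} to get stationarity of $\bar{x}$, then use convexity to upgrade this to $\bar{x}\in\Omega^*$, which immediately gives $f(\bar{x})\leq f(x^k)+\nu_k$ for all $k$. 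Your route is slightly more conceptual (it makes explicit that the cluster point is already optimal before proving full convergence), while the paper's route is marginally more self-contained (it does not need Proposition~\ref{pr:statArm} until the very end, where it is hidden inside Corollary~\ref{cor:xkquasifeArm}). Both are equally valid and of comparable length.
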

\begin{proof}
	Let $\bar{x}$ be a cluster point of  the sequence $(x^k)_{k\in\mathbb{N}}$ and $(x^{k_j})_{j\in\mathbb{N}}$ be a subsequence of $(x^k)_{k\in\mathbb{N}}$ such that $\lim_{j\to +\infty} x^{k_j} = \bar{x}$. Considering that  $f$ is continuous and $\lim_{k\to +\infty} \nu_{k} = 0$, we have $\lim_{j \to \infty} (f(x^{k_j})+\nu_{k_j})= f(\bar{x})$.    On the other hand,  Corollary~\ref{cr:fkvk}   implies that   $(f(x^{k})+\nu_k)_{k\in\mathbb{N}}$ is non-increasing.  Hence, we have   $\inf_{k\in {\mathbb N}} (f(x^{k})+\nu_{k})=\lim_{k\to \infty} (f(x^{k})+\nu_{k})= f(\bar{x})$.  Therefore $\bar{x} \in U$. Using  Corollary~\ref{cor:xkquasifeArm}, we obtain that $(x^k)_{k\in\mathbb{N}}$ converges to a stationary point $\tilde{x}\in C$ of  problem \eqref{eq:OptP}.
\end{proof}

\begin{theorem}
	Assume that $f$ is a convex function and  $\Omega^* \neq \varnothing$. Then,   $(x^k)_{k\in\mathbb{N}}$ converge to an optimal solution of problem~\eqref{eq:OptP}.
\end{theorem}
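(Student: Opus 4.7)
The plan is very short because the heavy lifting is already done by Corollary~\ref{cor:xkquasifeArm}. I would proceed as follows.

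First I would observe that since $\Omega^* \neq \varnothing$, we may pick any $x^* \in \Omega^*$, so that $f(x^*) \leq f(x)$ for every $x \in C$. In particular, because $(x^k)_{k\in\mathbb{N}} \subset C$ and $\nu_k \geq 0$ for all $k$, we get
\[
f(x^*) \leq f(x^k) \leq f(x^k) + \nu_k, \qquad \forall\, k \in \mathbb{N},
\]
so that $f(x^*) \leq \inf_{k \in \mathbb{N}}(f(x^k) + \nu_k)$. This shows $x^* \in U$, and hence $U \neq \varnothing$.

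Next I would invoke Corollary~\ref{cor:xkquasifeArm}, whose hypotheses (convexity of $f$ together with $U \neq \varnothing$) are now in force, to conclude that the whole sequence $(x^k)_{k\in\mathbb{N}}$ converges to some stationary point $\bar{x} \in C$ of problem~\eqref{eq:OptP}.

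Finally, I would close the argument by recalling that in the preliminaries it was observed that when $f$ is convex on $C$, every stationary point in the sense of \eqref{eq:StatPoint} belongs to $\Omega^*$. Applying this to $\bar{x}$ yields $\bar{x} \in \Omega^*$, so $(x^k)_{k\in\mathbb{N}}$ converges to an optimal solution of~\eqref{eq:OptP}. There is no real obstacle here: all the delicate work (quasi-Fej\'er monotonicity with respect to $(D_k)$, summability of the error terms $\epsilon_k$, and the passage from a cluster point to full convergence) was carried out in Lemma~\ref{Le:xkArm}, Corollary~\ref{cor:xkquasifeArm}, and Theorem~\ref{teo.qf}; the present theorem is essentially their specialization to the case in which $\Omega^*$ provides a ready-made element of $U$.
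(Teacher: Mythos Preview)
Your proposal is correct and follows exactly the same approach as the paper: show that $\Omega^* \neq \varnothing$ forces $U \neq \varnothing$, invoke Corollary~\ref{cor:xkquasifeArm} to obtain convergence to a stationary point, and then use convexity to upgrade stationarity to optimality. The paper's own proof is just a terser version of what you wrote.
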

\begin{proof}
	If $\Omega^* \neq \varnothing$, then   $U \neq \varnothing$.   Therefore,  Corollary~\ref{cor:xkquasifeArm} implies  that $(x^k)_{k\in\mathbb{N}}$ converges to a stationary point of  problem~\eqref{eq:OptP} and,  due to  $f$ be   convex, this point  is also an optimal solution.
\end{proof}


\subsection{Iteration-complexity bound}\label{SubSec:IterCompArm}

In the section, we preset some  iteration-complexity bounds related to  the sequence $(x^k)_{k\in\mathbb{N}}$ generated by  Algorithm~\ref{Alg:GeneralSeach}.  For that, besides  assuming  {\bf A1} and {\bf A2},  we also need the following assumption.
\begin{itemize}
	\item[{\bf A3.}] The  gradient $\nabla f$ of $f$ is  Lipschitz continuous with constant $L>0$.
\end{itemize}
For simple notations, we define the  following positive constant

\begin{equation} \label{eq;taumin}
	\tau_{\min} := \min \left\{1, \frac{\underline\omega(1-\sigma)}{{\alpha_{\max}}\mu L}\right\}.
\end{equation}

\begin{lemma}\label{Le:tauminArm}
	The steepsize $\tau_k$ in Algorithm~\ref{Alg:GeneralSeach} satisfies $\tau_k \geq \tau_{\min}$.
\end{lemma}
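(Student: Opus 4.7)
The plan is to show that whichever trial value of $\tau_{\textrm{trial}}$ is eventually accepted by the backtracking in Step~2 cannot be smaller than $\tau_{\min}$. The argument splits into two cases. If the initial value $\tau_{\textrm{trial}}=1$ already satisfies \eqref{eq:TkArm}, then $\tau_k=1\geq \tau_{\min}$ and there is nothing to do. Otherwise, let $\hat\tau$ denote the rejected trial immediately preceding the accepted $\tau_k$; by construction $\tau_k\geq \underline\omega\hat\tau$, so it suffices to obtain a uniform positive lower bound on $\hat\tau$.

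First I would exploit the failure of \eqref{eq:TkArm} at $\hat\tau$: together with $\nu_k\geq 0$, it gives
\[
 f(x^k+\hat\tau(w^k-x^k)) - f(x^k) > \sigma\hat\tau\,\langle\nabla f(x^k),w^k-x^k\rangle.
\]
I would then apply the descent inequality of Lemma~\ref{Le:derivlipsch} to the left-hand side and cancel one factor of $\hat\tau>0$. Since $\langle\nabla f(x^k),w^k-x^k\rangle<0$ by Lemma~\ref{Le:ProjProperty}(iii), rearranging yields
\[
 \hat\tau > \frac{2(1-\sigma)\bigl(-\langle\nabla f(x^k),w^k-x^k\rangle\bigr)}{L\,\|w^k-x^k\|^2}.
\]

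The main step is now to bound the ratio on the right from below by a quantity independent of $k$. For this I would invoke Assumption~\textbf{A1}, so that $w^k\in{\cal R}_{C,\gamma_k}^{D_k}(x^k,z^k)$ with $z^k=x^k-\alpha_k D_k^{-1}\nabla f(x^k)$. Taking $y=x^k$ in \eqref{eq:Projw} and substituting the expression for $z^k$ gives, after a brief expansion,
\[
 -\alpha_k\langle\nabla f(x^k),w^k-x^k\rangle \geq (1-\gamma_k)\|w^k-x^k\|_{D_k}^2.
\]
Combining this with $\alpha_k\leq\alpha_{\max}$, the norm inequality $\|w^k-x^k\|_{D_k}^2\geq \mu^{-1}\|w^k-x^k\|^2$ from \eqref{eq:pnv}, and the fact that $\gamma_k=(1-\zeta_k)/2<1/2$, I obtain
\[
 -\langle\nabla f(x^k),w^k-x^k\rangle \;\geq\; \frac{1-\gamma_k}{\mu\alpha_{\max}}\|w^k-x^k\|^2 \;>\; \frac{1}{2\mu\alpha_{\max}}\|w^k-x^k\|^2.
\]
Plugging this into the previous display yields $\hat\tau>(1-\sigma)/(\mu\alpha_{\max}L)$, and therefore $\tau_k\geq\underline\omega\hat\tau>\underline\omega(1-\sigma)/(\alpha_{\max}\mu L)\geq \tau_{\min}$, as required.

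The only delicate point is linking $-\langle\nabla f(x^k),w^k-x^k\rangle$ with $\|w^k-x^k\|^2$: this is precisely where Assumption~\textbf{A1} (through the forcing parameter $\gamma_k<1/2$) is essential, since it provides the quantitative descent inequality needed to eliminate the gradient term. Everything else reduces to the standard Armijo-type backtracking argument combined with the Lipschitz bound of Lemma~\ref{Le:derivlipsch}.
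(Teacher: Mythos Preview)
Your proof is correct and follows essentially the same route as the paper: case split on $\tau_k=1$, failure of \eqref{eq:TkArm} at the last rejected trial combined with Lemma~\ref{Le:derivlipsch}, then the key descent inequality $-\langle\nabla f(x^k),w^k-x^k\rangle\geq(2\mu\alpha_{\max})^{-1}\|w^k-x^k\|^2$, leading to the same lower bound on $\hat\tau$. The only minor difference is that you derive this descent inequality directly from \eqref{eq:Projw} via Assumption~\textbf{A1}, whereas the paper obtains it from Lemma~\ref{Le:ProjProperty}(i), which in fact requires only the weaker condition $w^k\in{\cal P}_{C,\zeta_k}^{D_k}(x^k,z^k)$ of \eqref{eq:PInexArm}; so, contrary to your closing remark, \textbf{A1} is not actually essential for this particular lemma.
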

\begin{proof}
	First, we assume that $\tau_k=1$. In this case, we have $\tau_k \geq \tau_{\min}$ and the required inequality holds. Now, we assume that $\tau_k<1$. Thus, it follows from  \eqref{eq:TkArm}   that there exists $0<\hat\tau_k\leq \min\{1,\tau_k/\underline\omega\}$ such that
	\begin{equation}\label{eq:ffA1}
		f\big(x^{k}+ \hat\tau_k (w^{k} - x^{k})\big) > f(x^{k}) + \sigma \hat\tau_k  \big\langle \nabla f(x^{k}), w^{k} - x^{k} \big\rangle+\nu_k.
	\end{equation}
	Considering that we are under assumption {\bf A3}, we  apply   Lemma \ref{Le:derivlipsch} to  obtain 
	\begin{equation}\label{eq:ffA2}
		f\big(x^{k}+ \hat\tau_k (w^{k} - x^{k})\big) \leq f(x^{k}) + \hat\tau_k \big\langle \nabla f(x^{k}), w^{k} - x^{k} \big\rangle +\frac{L}{2}\hat\tau_k^2 \|w^k-x^k\|^2.
	\end{equation}
	Hence, the combination of \eqref{eq:ffA1} with \eqref{eq:ffA2}  yields
	\begin{equation}\label{eq:ffA3}
		(1-\sigma) \big\langle \nabla f(x^{k}), w^{k} - x^{k} \big\rangle + \frac{L}{2}\hat\tau_k \|w^k-x^k\|^2 >  \frac{\nu_k}{\hat\tau_k} .
	\end{equation}
	On the order hand,    $w^k \in   {\cal R}_{C,\gamma_k}^{D_k}(x^{k}, z^k)$ with    $\gamma_k=(1-\zeta_k)/2$, where $z^k = x^{k}-\alpha_k D^{-1} _k\nabla f(x^{k})$. Thus,  applying Lemma~\ref{Le:ProjProperty}{\it (i)}  with $x=x^k$, $w(\alpha) = w^k$, $z = z^k$ and $\zeta= \zeta_k$, we obtain
	$$
		\big\langle \nabla f(x^{k}), w^k-x^{k}\big\rangle \leq -\frac{1}{2\alpha_k} \|w^k-x^k\|_{D_k}^2.
	$$
	Hence,  considering that    $\frac{1}{\mu}  \|w^k-x^k\|^2 \leq  \|w^k-x^k\|_{D_k}^2$  and $0<\alpha_k \leq \alpha_{\max}$, the last inequality 	implies
	$$
		\big\langle \nabla f(x^{k}), w^k-x^{k}\big\rangle \leq  -\frac{1}{2\alpha_{\max}\mu} \|w^k-x^k\|^2.
	$$
	The combination of  the last inequality with \eqref{eq:ffA3} yields
	$$
		\left(-\frac{(1-\sigma)}{2{\alpha_{\max}}\mu} + \frac{L}{2}\hat\tau_k \right)\|w^k-x^k\|^2>  \frac{\nu_k}{\hat\tau_k} \geq 0 .
	$$
	Thus, since $\hat\tau_k\leq \tau_k/\underline\omega$,   we obtain  $\tau_k\geq \underline\omega \hat\tau_k >\underline\omega(1-\sigma)/({\alpha_{\max}}\mu L)\geq \tau_{\min}$ and the proof is concluded.
\end{proof}

Considering  that $ {\cal R}_{C,\gamma_k}^{D_k}(x^{k}, z^k) \subset {\cal P}_{C, \zeta_k}^{D_k}(x^{k}, z^k)$, it follows from Lemma~\ref{Le:ProjProperty}{\it (ii)}  that if $x^k \in {\cal R}_{C,\gamma_k}^{D_k}(x^{k}, z^k)$, then the point $x^k$ is stationary for problem \eqref{eq:OptP}. Since $w^k \in {\cal R}_{C,\gamma_k}^{D_k}(x^{k}, z^k)$, the quantity $\|w^k-x^k\|$ can be seen as a measure of stationarity of the point $x^k$. In next theorem, we present an iteration-complexity bound for this quantity,  which is a constrained inexact  version of  \cite[Theorem~1]{GrapigliaSachs2017}.

\begin{theorem} \label{eq:theocomp}
	Let $ \tau_{\min}$ be defined in \eqref{eq;taumin}. Then, for every $N \in \mathbb{N}$, the following inequality holds
	$$
		\min\left\{\|w^k-x^k\| :~ k= 0, 1 \ldots, N-1\right\} \leq \sqrt{\frac{2{\alpha_{\max}}\mu\left[ f(x^0)-f^* +\sum_{k= 0}^{\infty}\nu_k\right] }{\sigma \tau_{\min}}} \frac{1}{\sqrt{N}}.
	$$
\end{theorem}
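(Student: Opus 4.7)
The plan is to chain together three ingredients already established: the descent estimate in Lemma~\ref{Le:ProjProperty}(i), the uniform lower bound $\tau_k \geq \tau_{\min}$ from Lemma~\ref{Le:tauminArm}, and the Armijo-type inequality \eqref{eq:TkArm} accepted at $\tau_{\textrm{trial}} = \tau_k$. The goal is to convert the sum of $\|w^k - x^k\|^2$ into a telescoping sum of function values, then use $f \geq f^*$ and assumption {\bf A2}.

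First, I would reproduce the key bound used already inside the proof of Lemma~\ref{Le:tauminArm}: since $w^k \in {\cal R}_{C,\gamma_k}^{D_k}(x^k, z^k) \subset {\cal P}_{C,\zeta_k}^{D_k}(x^k, z^k)$ with $z^k = x^k - \alpha_k D_k^{-1}\nabla f(x^k)$, Lemma~\ref{Le:ProjProperty}(i) together with the first inequality in \eqref{eq:pnv} and $\alpha_k \leq \alpha_{\max}$ yields
$$
\langle \nabla f(x^k), w^k - x^k\rangle \leq -\frac{1}{2\alpha_{\max}\mu}\,\|w^k - x^k\|^2, \qquad \forall\, k \in \mathbb{N}.
$$
Next, \eqref{eq:TkArm} and \eqref{eq:IterArm} give $f(x^{k+1}) \leq f(x^k) + \sigma\tau_k \langle \nabla f(x^k), w^k - x^k\rangle + \nu_k$. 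Since the above inner product is non-positive and $\tau_k \geq \tau_{\min} > 0$ by Lemma~\ref{Le:tauminArm}, substituting gives
$$
\frac{\sigma\tau_{\min}}{2\alpha_{\max}\mu}\,\|w^k - x^k\|^2 \leq f(x^k) - f(x^{k+1}) + \nu_k.
$$

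Summing the above inequality from $k = 0$ to $N-1$, the right-hand side telescopes to $f(x^0) - f(x^N) + \sum_{k=0}^{N-1}\nu_k$, which is bounded above by $f(x^0) - f^* + \sum_{k=0}^{\infty}\nu_k$ (the latter sum being finite by assumption {\bf A2}). Hence
$$
\frac{\sigma\tau_{\min}}{2\alpha_{\max}\mu}\sum_{k=0}^{N-1}\|w^k - x^k\|^2 \leq f(x^0) - f^* + \sum_{k=0}^{\infty}\nu_k.
$$
To conclude, use the trivial bound $N \cdot \min_{0 \leq k \leq N-1}\|w^k - x^k\|^2 \leq \sum_{k=0}^{N-1}\|w^k - x^k\|^2$, divide by $N$, and take square roots to recover the stated $\mathcal{O}(1/\sqrt{N})$ rate.

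There is no genuine obstacle here; the argument is a routine telescoping once the per-iteration descent $\|w^k - x^k\|^2 \leq \frac{2\alpha_{\max}\mu}{\sigma\tau_{\min}}[f(x^k) - f(x^{k+1}) + \nu_k]$ is isolated. The only points requiring care are (i) verifying that the inclusion ${\cal R}_{C,\gamma_k}^{D_k}(x^k, z^k) \subset {\cal P}_{C,\zeta_k}^{D_k}(x^k, z^k)$ under {\bf A1} legitimately lets us invoke Lemma~\ref{Le:ProjProperty}(i), and (ii) replacing the $D_k$-norm by the Euclidean norm via \eqref{eq:pnv}, which is exactly where the factor $\mu$ enters.
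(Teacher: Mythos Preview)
Your proof is correct and follows essentially the same route as the paper: apply Lemma~\ref{Le:ProjProperty}(i) with the norm comparison \eqref{eq:pnv} to bound $\langle \nabla f(x^k), w^k-x^k\rangle$, feed this into the accepted Armijo inequality \eqref{eq:TkArm} together with $\tau_k\geq\tau_{\min}$ from Lemma~\ref{Le:tauminArm}, then telescope and extract the minimum. The care you flag about the inclusion ${\cal R}_{C,\gamma_k}^{D_k}\subset{\cal P}_{C,\zeta_k}^{D_k}$ under {\bf A1} and the appearance of $\mu$ via \eqref{eq:pnv} is exactly right.
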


\begin{proof}
	Since  $w^k \in   {\cal R}_{C,\gamma_k}^{D_k}(x^{k}, z^k)$ with    $\gamma_k=(1-\zeta_k)/2$, where $z^k = x^{k}-\alpha_k D^{-1} _k\nabla f(x^{k})$,  applying  Lemma~\ref{Le:ProjProperty}{\it (i)} with $x=x^k$, $w(\alpha) = w^k$, $z = z^k$ and $\zeta= \zeta_k$, and taking into account that $(1/\mu)  \|w^k-x^k\|^2 \leq  \|w^k-x^k\|_{D_k}^2$  and $0<\alpha_k \leq \alpha_{\max}$, we obtain
	\begin{equation*}\label{eq:fD2}
		\big\langle \nabla f(x^{k}), w^k-x^{k}\big\rangle \leq -\frac{1}{2\alpha_k} \|w^k-x^k\|_{D_k}^2\leq -\frac{1}{2\alpha_{\max}\mu} \|w^k-x^k\|^2.
	\end{equation*}
	The definition of $\tau_k$  and \eqref{eq:TkArm} imply  $f(x^{k+1}) - f(x^k) \leq \sigma\tau_k \big\langle \nabla f(x^{k}),  w^k-x^{k} \big\rangle+\nu_k$. The combination of the last two inequalities together with Lemma~\ref{Le:tauminArm} yields
	$$
		f(x^k) - f(x^{k+1})+\nu_k \geq \sigma\tau_k \frac{1}{2\alpha_{\max}\mu} \|w^k-x^k\|^2 \geq \sigma \tau_{\min} \frac{1}{2\alpha_{\max}\mu} \|w^k-x^k\|^2.
	$$
	Hence, performing the sum of the above inequality for $k= 0, 1,\ldots, N-1$, we conclude that
	$$
		\sum_{k= 0}^{N-1} \|w^k - x^k\|^2 \leq \frac{2{\alpha_{\max}}\mu \big[f(x^0) - f(x^{N+1})+ \sum_{k= 0}^{N}\nu_k\big]}{\sigma \tau_{\min}}\leq \frac{2{\alpha_{\max}}\mu \left[ f(x^0) - f^*+ \sum_{k= 0}^{\infty}\nu_k\right]}{\sigma \tau_{\min}},
	$$
	which implies the desired result.
\end{proof}

Next theorem presents an iteration-complexity bound for the sequence $\left(f(x^k)\right)_{k\in\mathbb{N}}$ when $f$ is convex.

\begin{theorem} \label{th:ccconv}
	Let $f$ be a convex function on $C$. Then, for every $N \in \mathbb{N}$, there holds
	$$
		\min \left\{f(x^k) - f^* :~k = 0, 1 \ldots, N-1\right\} \leq \frac{\|x^0 - x^*\|^2_{D_0} + \xi\left[f(x^0)-f^*+ \sum_{k=0}^{\infty} \nu_k\right]}{2 \alpha_{\min} \tau_{\min}}\frac{1}{N}.
	$$
\end{theorem}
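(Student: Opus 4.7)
The plan is to specialize Lemma~\ref{Le:xkArm} at $x = x^\ast \in \Omega^\ast$ and to exploit the convexity of $f$ to convert the inner-product term into a function-value gap. Since $f$ is convex,
$$
\langle \nabla f(x^k),\, x^\ast - x^k\rangle \leq f^\ast - f(x^k) \leq 0.
$$
Plugging this into Lemma~\ref{Le:xkArm}, and using $\alpha_k \geq \alpha_{\min}$ together with $\tau_k \geq \tau_{\min}$ from Lemma~\ref{Le:tauminArm} (which is legitimate because the inner product is non-positive, so a lower bound on $\alpha_k\tau_k$ can be pushed through), I obtain the per-iteration recursion
$$
2\alpha_{\min}\tau_{\min}\bigl(f(x^k) - f^\ast\bigr) + \|x^{k+1} - x^\ast\|_{D_k}^2 \;\leq\; \|x^k - x^\ast\|_{D_k}^2 + \xi\bigl[f(x^k) - f(x^{k+1}) + \nu_k\bigr].
$$

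Next, I would sum this inequality for $k = 0, 1, \ldots, N-1$. The $\xi$-weighted bracket telescopes to $\xi[f(x^0) - f(x^N) + \sum_{k=0}^{N-1}\nu_k]$, which is bounded above by $\xi\bigl[f(x^0) - f^\ast + \sum_{k=0}^{\infty}\nu_k\bigr]$ using $f(x^N) \geq f^\ast$ and the summability of $(\nu_k)_{k\in\mathbb{N}}$ from assumption~A2. The squared-norm contributions telescope to a quantity dominated by $\|x^0 - x^\ast\|_{D_0}^2$ (the trailing non-negative residual $\|x^N - x^\ast\|_{D_{N-1}}^2$ may be discarded).

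Finally, using the elementary bound
$$
N\cdot \min_{0\leq k\leq N-1}\bigl(f(x^k) - f^\ast\bigr) \;\leq\; \sum_{k=0}^{N-1}\bigl(f(x^k) - f^\ast\bigr),
$$
and dividing through by $2\alpha_{\min}\tau_{\min}N$, the desired $\mathcal{O}(1/N)$ rate follows. The step I expect to require the most care is the telescoping of the $\|\cdot\|_{D_k}$-terms, since the scaling matrix $D_k$ varies with the iteration; this is the analogue of the subtlety already handled in the quasi-Fej\'er analysis of Corollary~\ref{cor:xkquasifeArm} and Theorem~\ref{teo.qf}, where the structure of $\mathcal{D}_\mu$ and/or the matched-index convention allow the cross-terms between $D_k$ and $D_{k-1}$ to be absorbed. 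Once that is in hand, the remaining inequalities are algebraic.
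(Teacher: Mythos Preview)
Your approach is exactly the paper's: specialize Lemma~\ref{Le:xkArm} at $x=x^*$, use convexity to bound $\langle\nabla f(x^k),x^*-x^k\rangle\le f^*-f(x^k)\le 0$, invoke $\alpha_k\ge\alpha_{\min}$ together with Lemma~\ref{Le:tauminArm}, sum over $k=0,\ldots,N-1$, and bound the resulting sum from below by $N$ times the minimum.

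The one substantive point you flag---the telescoping of the variable-metric norms---is precisely where the paper's own argument is terse. Lemma~\ref{Le:xkArm} delivers $\|x^{k+1}-x^*\|_{D_k}^2$ on the left-hand side, yet the paper's displayed per-iteration inequality carries $\|x^{k+1}-x^*\|_{D_{k+1}}^2$ (attributed only to ``some algebraic manipulations''), after which the sum telescopes cleanly to $\|x^0-x^*\|_{D_0}^2$. Without an additional monotonicity or comparability hypothesis on the sequence $(D_k)_{k\in\mathbb{N}}\subset\mathcal{D}_\mu$, that index shift is not automatic, so your proposal is no less complete than the paper's proof on this point; both leave the same step to be filled in.
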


\begin{proof}
	Using the first inequality in \eqref{eq:TolArm} and Lemma~\ref{Le:tauminArm}, we have $2 \alpha_{\min} \tau_{\min} \leq 2 \alpha_k \tau_k$, for all $k\in {\mathbb N}$. We also know form the convexity of $f$ that $\langle \nabla f(x^k), x^*-x^k \rangle \leq f^* - f(x^k)$, for all $k\in {\mathbb N}$. Thus, applying Lemma \ref{Le:xkArm} with $x=x^*$, after some algebraic manipulations, we conclude
	$$
		2 \alpha_{\min} \tau_{\min} \left[f(x^k)-f^*\right] \leq \|x^k-x^*\|_{D_k}^2-\|x^{k+1}-x^*\|_{D_{k+1}}^2 + \xi \left[f(x^k) - f(x^{k+1})+\nu_k \right] \quad k = 0, 1, \ldots.
	$$
	Hence, performing the sum of the above inequality for $k = 0,1,\ldots, N-1$, we obtain
	$$
		2 \alpha_{\min} \tau_{\min}\sum_{k=0}^{N-1} \left[f(x^k)-f^*\right] \leq \|x^0-x^*\|_{D_0}^2-\|x^{N+1}-x^*\|_{D_{N}}^2 + \xi\Big[f(x^0)-f(x^{N+1})+ \sum_{k=0}^{N-1} \nu_k\Big].
	$$
	Thus, $2\alpha_{\min} \tau_{\min} N \min\{f(x^k) - f^*:~ k = 0, 1 \ldots, N-1\} \leq \|x^0 - x^*\|^2_{D_0}+ \xi[f(x^0)-f^*+ \sum_{k=0}^{N-1} \nu_k]$, which implies the desired inequality.
\end{proof}

We ended this section with some results regarding the  number of function evaluations performed by Algorithm~\ref{Alg:GeneralSeach}.
Note that the computational cost associated to each (outer) iteration involves a gradient evaluation, the computation of a (inexact) projection, and evaluations of $f$ at different trial points.
Thus, we must consider the function evaluations at the rejected trial points.

\begin{lemma} \label{eq:nfeas}
	Let $N_{k}$ be  the number of function evaluations after $k\geq 0$ iterations of Algorithm~\ref{Alg:GeneralSeach}. Then,  $N_{k}\leq 1+ (k+1)[\log (\tau_{\min})/\log (\bar\omega)+1].$
\end{lemma}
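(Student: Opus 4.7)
The plan is to count function evaluations iteration by iteration. Each outer iteration $i$ performs a sequence of trial evaluations in the backtracking loop of Step~2, plus one single initial evaluation of $f(x^0)$ is needed at the very start. At iteration $i$, let $j_i$ denote the number of times $\tau_{\text{trial}}$ is rejected (and hence reduced). Then exactly $j_i+1$ evaluations of $f$ are spent in iteration $i$: one per tested trial point, with the last one being accepted. Crucially, the accepted value $f(x^{i+1})=f(x^i+\tau_i(w^i-x^i))$ is already computed and can be reused as $f(x^{i+1})$ in the next iteration's test, so no additional evaluation of $f(x^i)$ need be counted.

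Now I would bound $j_i$ uniformly. Since $\tau_{\text{trial}}$ starts at $1$ and each failed step shrinks it by a factor of at most $\bar\omega$, after $j_i$ rejections one has $\tau_i \leq \bar\omega^{j_i}$. Combining this with the lower bound $\tau_i \geq \tau_{\min}$ from Lemma~\ref{Le:tauminArm}, and using $0<\bar\omega<1$ (so $\log\bar\omega<0$), we obtain
$$
j_i \;\leq\; \frac{\log \tau_{\min}}{\log \bar\omega},
$$
an inequality that also holds trivially when $j_i=0$ (since $\tau_{\min}\leq 1$ gives $\log\tau_{\min}/\log\bar\omega \geq 0$).

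Summing $j_i+1$ over the $k+1$ iterations $i=0,1,\dots,k$ and adding the one evaluation needed to initialize $f(x^0)$ yields
$$
N_k \;=\; 1 + \sum_{i=0}^{k}(j_i+1) \;\leq\; 1 + (k+1)\left(\frac{\log \tau_{\min}}{\log \bar\omega}+1\right),
$$
which is the claimed bound.

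The only subtlety worth double-checking is the bookkeeping of which evaluations are already cached: the value $f(x^i)$ appearing on the right-hand side of the Armijo-type test \eqref{eq:TkArm} at iteration $i$ is exactly the accepted trial value of the previous iteration (or the initial $f(x^0)$ when $i=0$), so it is not counted twice. Apart from this careful counting, the proof is a routine geometric-decay argument for backtracking, combined with the uniform step-size lower bound already established in Lemma~\ref{Le:tauminArm}.
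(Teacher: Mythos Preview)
Your proof is correct and follows essentially the same approach as the paper: bound the number of rejections $j_i$ at each iteration via $\tau_i\le\bar\omega^{\,j_i}$ combined with $\tau_i\ge\tau_{\min}$ from Lemma~\ref{Le:tauminArm}, then sum $j_i+1$ over $i=0,\dots,k$ and add one for the initial evaluation of $f(x^0)$. Your extra remarks on caching $f(x^{i+1})$ for reuse in the next iteration make the bookkeeping behind the ``$+1$'' more explicit than the paper does, but the argument is otherwise identical.
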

\begin{proof}
	Let $j(k)\geq 0$ be the number of inner iterations in Step~2 of Algorithm~\ref{Alg:GeneralSeach} to compute the step size $\tau_k$.  Thus, $\tau_k\leq {\bar\omega}^{j(k)}$.
Using Lemma \ref {Le:tauminArm}, we have  $0< \tau_{\min}\leq  \tau _{k}$ for all  $k\in \mathbb{N}$, which implies that  $ \log \left( \tau_{\min} \right) \leq \log (\tau _{k})=j(k) \log (\bar\omega)$, for all $k \in \mathbb{N}$.  Hence, due to  $\log(\bar\omega) <0$, we have $ j(k) \leq \log (\tau_{\min})/\log (\bar\omega)$. Therefore,
	$$
	N_k =	1+ \sum _{\ell=0}^{k}(j(\ell)+1)\leq 1+  \sum _{i=0}^{k} \Big(\frac{\log (\tau_{\min})}{\log (\bar\omega)} +1\Big)= 1+(k+1) \Big(\frac{\log (\tau_{\min})}{\log (\bar\omega)}+1\Big),
	$$
	where the first equality follows from the definition of $N_k$.
\end{proof}

\begin{theorem}
	For a given $\epsilon>0$, Algorithm \ref{Alg:GeneralSeach} computes $x^k$ and $w^k$ such that $\|  w^{k}-x^{k}\|\leq \epsilon$ using,  at most,
$$1+\left({\frac{2{\alpha_{\max}}\mu\left[f(x^0)-f^* +\sum_{k= 0}^{\infty}\nu_k\right] }{\sigma \tau_{\min}}} \frac{1}{\epsilon^2}+1\right) \Big(\frac{\log (\tau_{\min})}{\log (\bar\omega)}+1\Big)$$
		function evaluations.
	
\end{theorem}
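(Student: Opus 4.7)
The plan is to combine the iteration-complexity bound of Theorem~\ref{eq:theocomp} with the per-iteration bound on function evaluations from Lemma~\ref{eq:nfeas}. First, I would determine how many outer iterations $N$ suffice to guarantee that $\min\{\|w^k - x^k\| : k=0,\ldots,N-1\}\leq \epsilon$. By Theorem~\ref{eq:theocomp}, it is enough to ensure
$$
\sqrt{\frac{2\alpha_{\max}\mu\left[f(x^0)-f^*+\sum_{k=0}^{\infty}\nu_k\right]}{\sigma\tau_{\min}}}\,\frac{1}{\sqrt{N}}\leq \epsilon,
$$
which, after squaring and rearranging, is equivalent to
$$
N \geq \frac{2\alpha_{\max}\mu\left[f(x^0)-f^*+\sum_{k=0}^{\infty}\nu_k\right]}{\sigma\tau_{\min}}\cdot\frac{1}{\epsilon^2}.
$$
Hence it is enough to take $N$ equal to the smallest integer no smaller than this quantity; in particular $N$ can be chosen to satisfy
$$
N \leq \frac{2\alpha_{\max}\mu\left[f(x^0)-f^*+\sum_{k=0}^{\infty}\nu_k\right]}{\sigma\tau_{\min}}\cdot\frac{1}{\epsilon^2}+1.
$$

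Next I would invoke Lemma~\ref{eq:nfeas}, which bounds the total number of function evaluations after $k$ outer iterations by $N_k\leq 1+(k+1)\bigl[\log(\tau_{\min})/\log(\bar\omega)+1\bigr]$. Applied with $k=N-1$, this yields at most
$$
1+N\left(\frac{\log(\tau_{\min})}{\log(\bar\omega)}+1\right)
$$
function evaluations to carry out $N$ outer iterations. Substituting the upper bound on $N$ from the previous paragraph then produces exactly the expression in the statement of the theorem.

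Finally, since $\tau_{\min}\in(0,1]$ and $\bar\omega\in(0,1)$ both have nonpositive logarithms, the factor $\log(\tau_{\min})/\log(\bar\omega)\geq 0$, so the bound is well defined and consistent with the one in Lemma~\ref{eq:nfeas}. There is no real analytic obstacle here; the only bookkeeping care needed is in passing from the ``there exists $k<N$ with $\|w^k-x^k\|\leq \epsilon$'' interpretation of Theorem~\ref{eq:theocomp} to an explicit integer value of $N$, and in correctly combining the additive ``$+1$'' that appears both in the iteration count and in Lemma~\ref{eq:nfeas}. Once this is written out, concatenating the two bounds delivers the stated result directly.
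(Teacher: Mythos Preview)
Your proposal is correct and follows exactly the approach of the paper, which simply states that the result follows straightforwardly from Theorem~\ref{eq:theocomp} and Lemma~\ref{eq:nfeas}. Your write-up just makes the bookkeeping explicit (choosing $N$ as the ceiling of the bound from Theorem~\ref{eq:theocomp} and then applying Lemma~\ref{eq:nfeas} with $k=N-1$), which is precisely what the paper leaves implicit.
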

\begin{proof}
	The proof follows straightforwardly from Theorem~\ref{eq:theocomp} and Lemma~\ref{eq:nfeas}.
\end{proof}

\begin{theorem}
	Let $f$ be a convex function on $C$. For a given $\epsilon>0$, the number  of  function evaluations performed by  Algorithm \ref{Alg:GeneralSeach} is,  at most,
	$$1+\left(\frac{\|x^0 - x^*\|^2_{D_0} + \xi\left[f(x^0)-f^*+ \sum_{k=0}^{\infty} \nu_k\right]}{2 \alpha_{\min} \tau_{\min}}\frac{1}{\epsilon} + 1 \right)\Big(\frac{\log (\tau_{\min})}{\log (\bar\omega)}+1\Big),$$
	to compute $x^k$ such that $f(x^k) - f^*\leq \epsilon$.
\end{theorem}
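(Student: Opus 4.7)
The plan is to combine directly the two already-proved results that immediately precede this theorem: Theorem~\ref{th:ccconv}, which provides the iteration-complexity bound in the convex case, and Lemma~\ref{eq:nfeas}, which bounds the total number of function evaluations in terms of the number of completed outer iterations.

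First, I would abbreviate the constant appearing on the right-hand side by
$$C := \frac{\|x^0 - x^*\|^2_{D_0} + \xi\bigl[f(x^0)-f^*+ \sum_{k=0}^{\infty} \nu_k\bigr]}{2 \alpha_{\min} \tau_{\min}},$$
so that Theorem~\ref{th:ccconv} reads $\min\{f(x^k)-f^*:~ k=0,1,\dots,N-1\}\leq C/N$ for every $N\in\mathbb{N}$. To force this minimum below $\epsilon$ it suffices to pick $N$ with $N\geq C/\epsilon$, so the smallest such integer satisfies $N\leq \lceil C/\epsilon\rceil \leq C/\epsilon + 1$. Consequently, after at most this many outer iterations of Algorithm~\ref{Alg:GeneralSeach}, there exists an iterate $x^k$ with $f(x^k)-f^*\leq \epsilon$.

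Second, I would insert this iteration count into Lemma~\ref{eq:nfeas}. That lemma gives $N_k\leq 1 + (k+1)\bigl[\log(\tau_{\min})/\log(\bar\omega)+1\bigr]$ when the algorithm has produced iterates through index $k$; matching $k+1$ with the iteration count from the previous paragraph yields immediately
$$1 + \Bigl(\tfrac{C}{\epsilon}+1\Bigr)\Bigl(\frac{\log(\tau_{\min})}{\log(\bar\omega)}+1\Bigr),$$
which is exactly the claimed bound.

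Since both ingredients are in hand, no genuinely new argument is needed. The only points requiring care are the rounding step $\lceil C/\epsilon\rceil \leq C/\epsilon + 1$ and the correct alignment of indexing conventions between Theorem~\ref{th:ccconv} (which sums over $k=0,\dots,N-1$) and Lemma~\ref{eq:nfeas} (whose proof sums $j(\ell)+1$ for $\ell=0,\dots,k$). I therefore expect no real obstacle; the proof amounts to a one-line assembly of the two preceding results.
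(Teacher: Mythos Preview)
Your proposal is correct and matches the paper's approach exactly: the paper's proof is the single line ``The proof follows straightforwardly from Theorem~\ref{th:ccconv} and Lemma~\ref{eq:nfeas},'' which is precisely the assembly you describe. Your attention to the rounding step and the alignment of indices is more detail than the paper itself provides.
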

\begin{proof}
	The proof follows straightforwardly from Theorem~\ref{th:ccconv} and  Lemma~\ref{eq:nfeas}.
\end{proof}


\section{Numerical experiments} \label{Sec:NumExp}

This section presents some numerical experiments in order to illustrate the potential advantages of considering inexact schemes in the SPG method.
We will discuss inexactness associated with both the projection onto the feasible set and the line search procedure. 

Given $A$ and $B$ two $m\times n$ matrices, with $m\geq n$, and $c\in\R$, we consider the matrix function $f:\R^{n\times n}\to\R$ given by:
\begin{equation}\label{objfun}
f(X):=\ds\frac{1}{2}\|AX-B\|^2_F + \sum_{i=1}^{n-1} \left[ c \left( X_{i+1,i+1}-X_{i,i}^2 \right)^2 + (1-X_{i,i})^2   \right],
\end{equation}
which combines a least squares term with a Rosenbrock-type function. 
Throughout this section, $X_{i,j}$ stands for the $ij$-element of the matrix $X$ and $\|\cdot\|_F$ denotes the Frobenius matrix norm, i.e., $\|A\|_F:=\sqrt{\langle A,A \rangle}$ where the inner product is given by $\langle A,B \rangle = \tr(A^TB)$.
The test problems consist of minimizing $f$ in \eqref{objfun} subject to two different feasible sets, as described below.
We point out that interesting applications in many areas emerge as constrained least squares matrix problems, see \cite{BirginMartinezRaydan2003} and references therein.
In turn, the Rosenbrock term was added in order to make the problems more challenging.

\begin{description}
\item[{\bf Problem~I:}]
\begin{equation*} \label{sdd}
	\begin{array}{cl}
		\ds\min     & f(X)           \\
		\mbox{s.t.} & X \in SDD^+,   \\
		            & L\leq X\leq U,
	\end{array}
\end{equation*}
where $SDD^+$ is the cone of symmetric and diagonally dominant real matrices with positive diagonal, i.e., 
$$SDD^+ :=\{X\in\R^{n\times n}\mid X=X^T, \; X_{i,i}\geq \ds\sum_{j\neq i}|X_{i,j}| \; \forall i\},$$
$L$ and $U$ are given $n\times n$ matrices, and  $L\leq X\leq U$ means that $L_{i,j} \leq X_{i,j} \leq U_{i,j}$ for all $i,j$.
The feasible set of Problem I was considered, for example, in the numerical tests of \cite{BirginMartinezRaydan2003}.

\item[{\bf Problem~II:}]
\begin{equation*} \label{spec}
	\begin{array}{cl}
		\ds\min     & f(X)                  \\
		\mbox{s.t.} & X \in \mathbb{S}^n_+, \\
		            & \tr(X)=1,
	\end{array}
\end{equation*}
where $\mathbb{S}^n_+$ is the cone of symmetric and positive semidefinite real matrices and $\tr(X)$ denotes the trace of $X$. 
The feasible set of Problem II was known as {\it spectrahedron} and appears in several interesting applications see, for example, \cite{allen2017linear,douglasprojected} and references therein.
\end{description}


It is easy to see that the feasible set o Problem~I is a closed and convex set and the feasible set of Problem~II is  a compact and convex set. 
As discussed in Section~\ref{sec:pcip}, the Dykstra's alternating algorithm and the Frank-Wolfe algorithm can be used to calculate inexact projections. The choice of the most appropriate method depends on the structure of the feasible set under consideration. 
For Problem~I, we used the Dykstra's algorithm described in \cite{BirginMartinezRaydan2003}, see also \cite{dykstraSDD}. In this case, $SDD^+=\ds\cap_{i=1}^n SDD^+_i$, where 
$$SDD^+_i :=\{X\in\R^{n\times n}\mid X=X^T, \; X_{i,i}\geq \ds\sum_{j\neq i}|X_{i,j}| \} \; \mbox{for all} \; i=1,\ldots,n,$$
and the projection of a given $Z\in\R^{n\times n}$ onto $SDD^+$ consists of cycles of projections onto the convex sets $SDD^+_i$.
Here an iteration of the Dykstra's algorithm should be understood as a complete cycle of projections onto all $SDD^+_i $ sets and onto the box $\{X\in\R^{n\times n}\mid L\leq X\leq U\}$.
Recall that this scheme provides an inexact projection as in Definition~\ref{def:InexactM}.
Now consider Problem~II. It is well known that calculating an exact projection onto the spectrahedron (i.e., onto the feasible set of Problem~II) requires a complete spectral decomposition, which can be prohibitive specially in the large scale case. In contrast, the computational cost of an iteration of the Frank-Wolfe algorithm described in Algorithm~\ref{Alg:CondG} is associated by an extreme eigenpair computation, see, for example, \cite{Jaggi2013}. Unfortunately, despite its low cost per-iteration, the Frank-Wolfe algorithm suffers from a slow convergence rate.
Thus, we considered a variant of the Frank-Wolfe algorithm proposed in \cite{allen2017linear}, which improves the convergence rate and the total time complexity of the classical Frank-Wolfe method. This algorithm specialized for the projection problem over the spectrahedron is carefully described in \cite{aguiar2021inexact}.
Without attempting to go into details, it replaces the top eigenpair computation in Frank-Wolfe with a top-$p$ (with $p\ll n$) eigenpair computation, where $p$ is an algorithmic parameter automatically selected.
The total number of computed eigenpairs can be used to measure the computational effort to calculate projections.
We recall that a Frank-Wolfe type scheme provides an inexact projection as in Definition~\ref{def:InexactProjC}.

We notice that Problems~I and II can be seen as particular instances of the problem~\eqref{eq:OptP} in which the number of variables is $(n^2+n)/2$. This mean that they can be solved by using Algorithm~\ref{Alg:GeneralSeach}.
We are especially interested in the spectral gradient version \cite{spgsiam} of the SPG method, which is often associated with large-scale problems \cite{JSSv060i03}. For this, we implemented Algorithm~\ref{Alg:GeneralSeach} considering $D_k:=I$ for all $k$, $\alpha_0 := \min(\alpha_{\max}, \max(\alpha_{\min}, 1/ \| \nabla f(x^0) \|))$ and, for $k>0$,
\begin{equation*}\label{spg}
\alpha_k:=\left\{\begin{array}{ll}
\ds\min (\alpha_{\max},\max (\alpha_{\min},\langle s^k,s^k\rangle/\langle s^k,y^k\rangle)), & \mbox{if} \; \langle s^k,y^k\rangle > 0\\
\alpha_{\max}, & \mbox{otherwise},
\end{array}\right.
\end{equation*}
where $s^k:=X^k - X^{k-1}$, $y^k:=\nabla f(X^k) - \nabla f(X^{k-1})$, $\alpha_{\min}=10^{-10}$, and $\alpha_{\max}=10^{10}$.
We set $\sigma = 10^{ -4}$, $\underline\tau=0.1$, $\bar\tau=0.9$, $\mu=1$ and $\nu_0=0$. Parameter $\delta_{\min}$ was chosen according to the line search used (see Section~\ref{Sec:SGM}), while parameter $\zeta_{\min}$ depends on the inexact projection scheme considered.

In the line search scheme (Step~2 of Algorithm~\ref{Alg:GeneralSeach}), if a step size $\tau_{\textrm{trial}}$  is not accepted, then $\tau_{\textrm{new}}$ is calculated using one-dimensional quadratic interpolation employing the safeguard $\tau_{\textrm{new}}\gets \tau_{\textrm{trial}}/2$  when the minimum of the one-dimensional quadratic lies outside $[\underline\omega \tau_{\textrm{trial}}, \bar\omega \tau_{\textrm{trial}} ]$, see, for example,  \cite[Section 3.5]{nocedal2006numerical}.
Concerning the stopping criterion, all runs were stopped at an iterate $X^k$ declaring convergence if
$$\ds \max_{i,j} (|X^k_{i,j}-W^k_{i,j}| )\leq 10^{-6},$$ 
where $W^k$ is as in \eqref{eq:PInexArm}.
Our codes are written in Matlab and are freely available at \url{https://github.com/maxlemes/InexProj-SGM}. All experiments were run on a macOS 10.15.7 with 3.7GHz Intel Core i5 processor and 8GB of RAM.

\subsection{Influence of the inexact projection} \label{sec:forcing}

We begin the numerical experiments by checking the influence of the forcing parameters that control the degree of inexactness of the projections in the performance of the SPG method.
In this first battery of tests, we used Armijo line searches, see Section~\ref{Sec:SGM}.

We generated 10 instances of Problem~I using $n=100$, $m=200$, and $c=10$. The matrices $A$ and $B$ were randomly generated with elements belonging to $[-1,1]$. We set $L\equiv 0$ and $U\equiv \infty$ as in \cite{BirginMartinezRaydan2003}. For each instance, the starting point $X^0$ was randomly generated with elements belonging to $[0,1]$, then it was redefined as $(X^0 + (X^0)^T )/2$ and its diagonal elements were again redefined as $2\sum_{j\neq i}^n X_{i,j}$, ensuring a feasible starting point. Figure~\ref{SDD} shows the average number of iterations,  the average number of Dykstra’s iterations, and the average  CPU time in seconds needed to reach the solution for different choices of $\zeta_k$, namely, $\zeta_k=0.99$, $0.9$, $0.8$, $0.7$, $0.6$, $0.5$, $0.4$, $0.3$, $0.2$, and $0.1$ for all $k$. 
Remember that {\it smaller values} of  $\zeta_k$ imply {\it more inexact} projections. As expected, the number of iterations of the SPG tended to increase as $\zeta_k$ decreased, see  Figure~\ref{SDD}(a).
On the other hand, the computational cost of an outer iteration (which can be measured by the number of Dykstra’s iterations) tends to decrease when considering smaller values of $\zeta_k$. 
This suggests a trade-off, controlled by parameter $\zeta_k$, between the number and the cost per iteration.
Figure~\ref{SDD}(b) shows that values for $\zeta_k$ close to 0.8 showed better results, which is in line with the experiments reported in \cite{BirginMartinezRaydan2003}.
Finally, as can be seen in Figure~\ref{SDD}(c), the CPU time was shown to be directly proportional to the number Dykstra’s iterations.


\begin{figure}[H]\centering
	\begin{tabular}{ccc}
		\includegraphics[scale=\myscale]{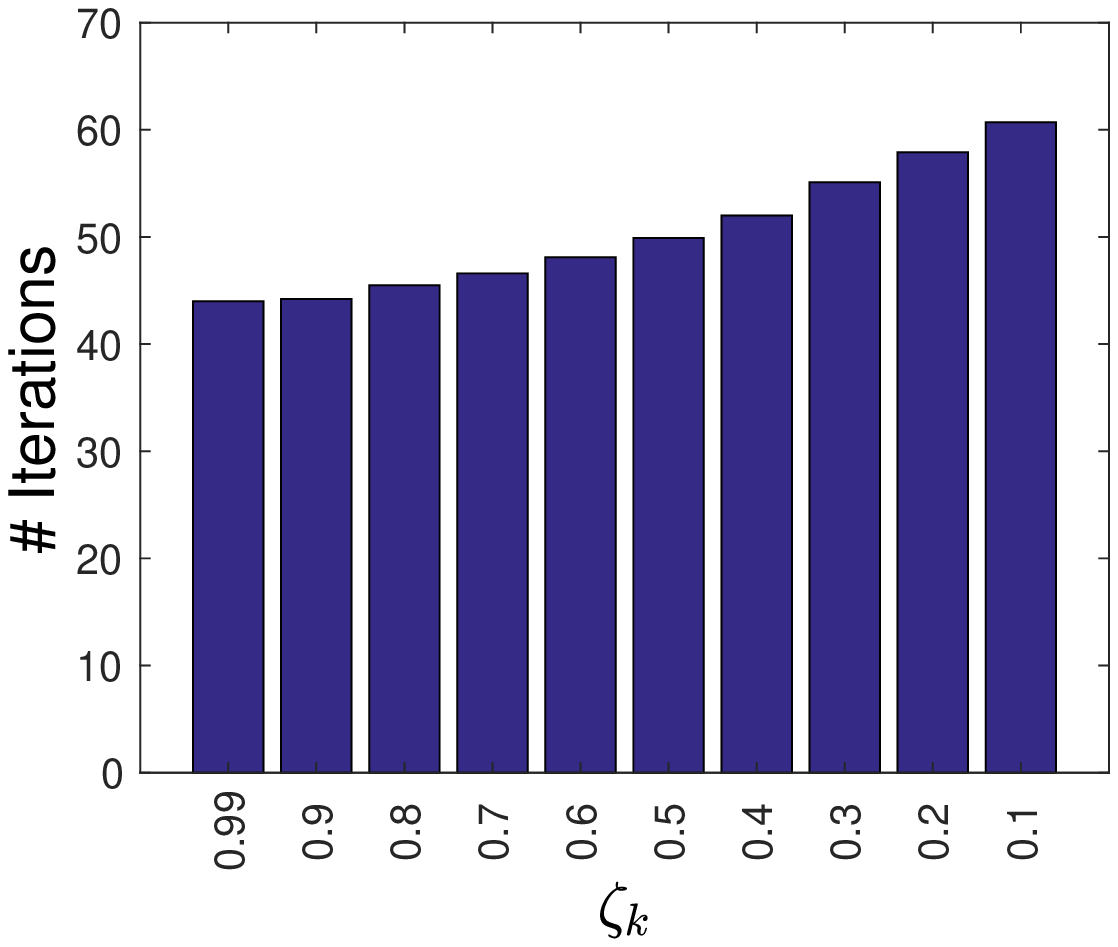} & \includegraphics[scale=\myscale]{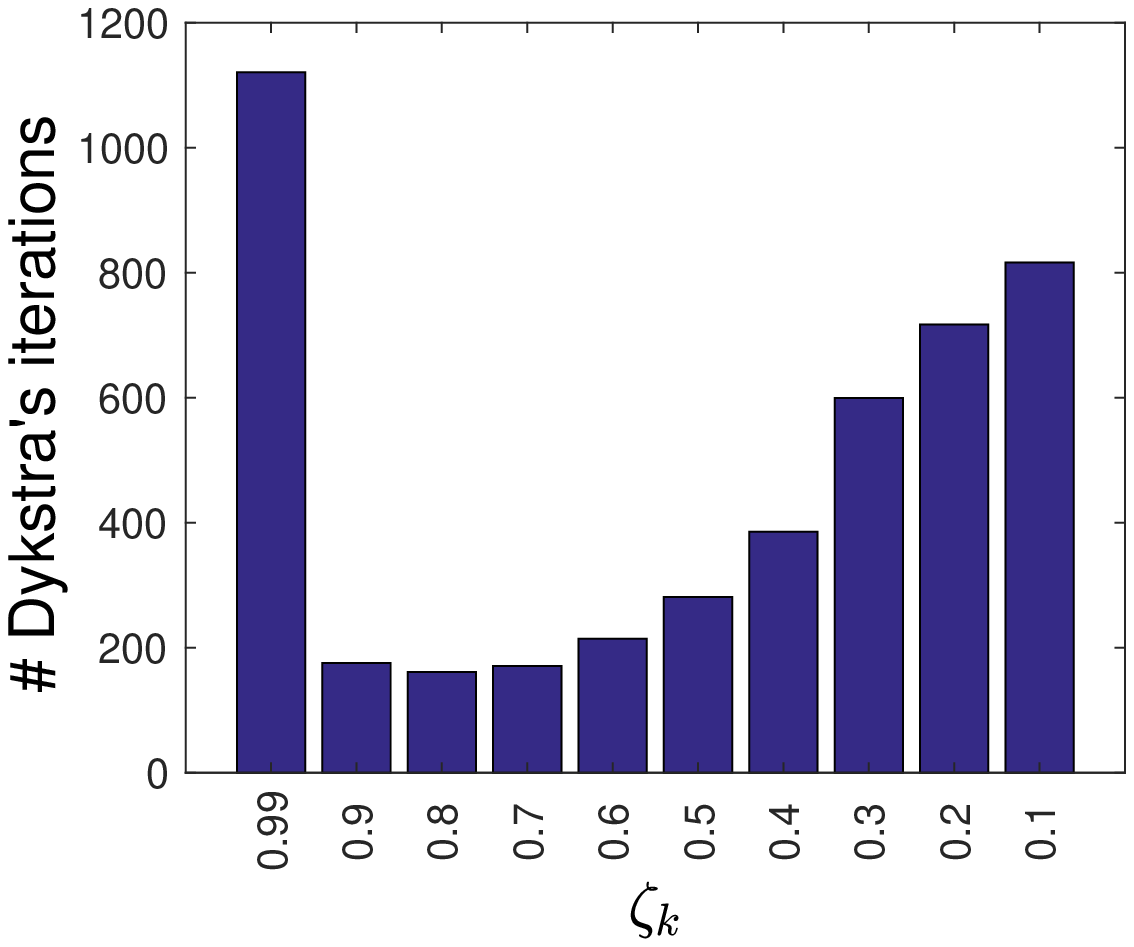} & \includegraphics[scale=\myscale]{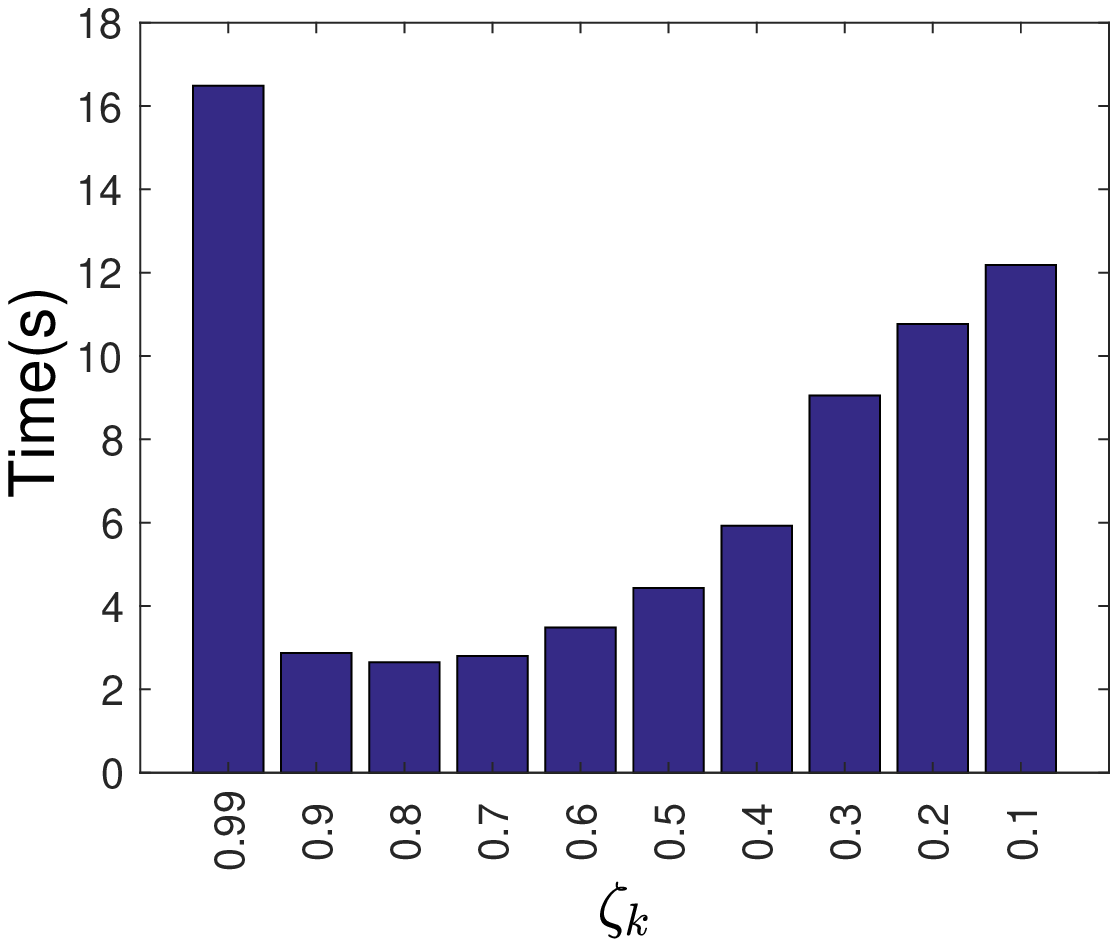}\\
		(a) & (b) & (c)\\
	\end{tabular}
	\caption{Results for 10 instances of Problem I using $n=100$, $m=200$, and $c=10$. Average number of: (a) iterations; (b) Dykstra’s iterations; (c)  CPU time in seconds needed to reach the solution for different choices of $\zeta_k$.}
	\label{SDD}
\end{figure}

Although Algorithm~\ref{Alg:GeneralSeach} is given only in terms of parameter $\zeta_k$, we will directly consider parameter $\gamma_k$ for Problem~II in which inexact projections are computed according to Definition~\ref{def:InexactProjC}. We randomly generated 10 instances of Problem~II with $n=800$, $m=1000$, and $c=100$. Matrices $A$ and $B$ were obtained similarly to Problem~I. In turn, a starting point $X^0$ was randomly generated with elements in the interval $[-1,1]$, then it was redefined to be $X^0  (X^0)^T/\tr(X^0  (X^0)^T)$, resulting in a feasible initial guess.
Figure~\ref{Spec} shows the average number of iterations,  the average number of computed eigenpairs, and the average  CPU time in seconds needed to reach the solution for different constant choices of $\gamma_k$ ranging from $10^{-8}$ to $0.4999$. Now, {\it higher values} of $\gamma_k$ imply {\it more inexact} projections. Note that for appropriate choices of $\zeta_k$, the adopted values of $\gamma_k$ fulfill Assumption A1 of Section~\ref{Sec:FullConvRes}. Concerning the number of iterations, as can be seen in Figure~\ref{Spec}(a), the SPG method was not very sensitive to the choice of parameter $\gamma_k$. Hence, since higher values of $\gamma_k$ imply cheaper iterations, the number of computed eigenpairs and the CPU time showed to be inversely proportional to $\gamma_k$, see Figures~\ref{Spec}(b)--(c). Thus, our experiments suggest that the best value for $\gamma_k$ seems to be $0.4999$.

\begin{figure}[H]\centering
	\begin{tabular}{ccc}
		\includegraphics[scale=\myscale]{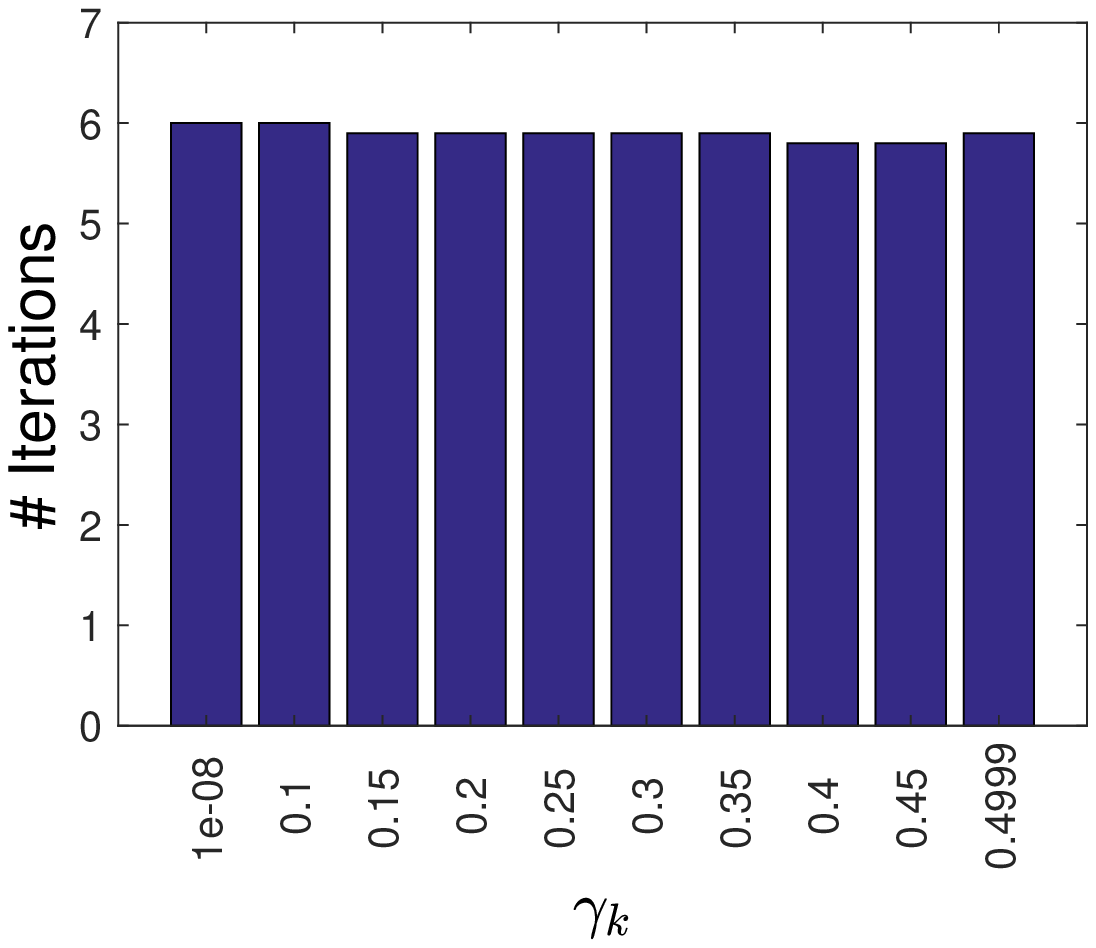} & \includegraphics[scale=\myscale]{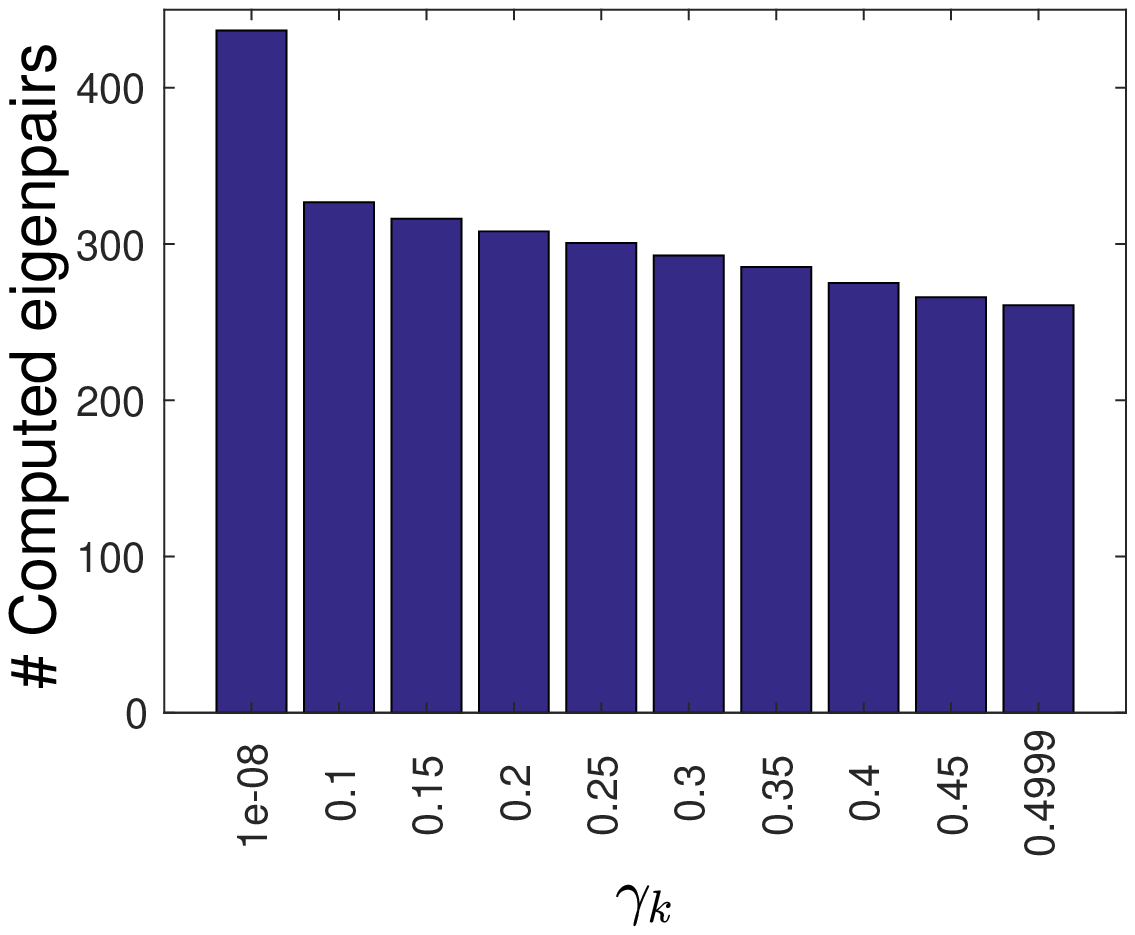} & \includegraphics[scale=\myscale]{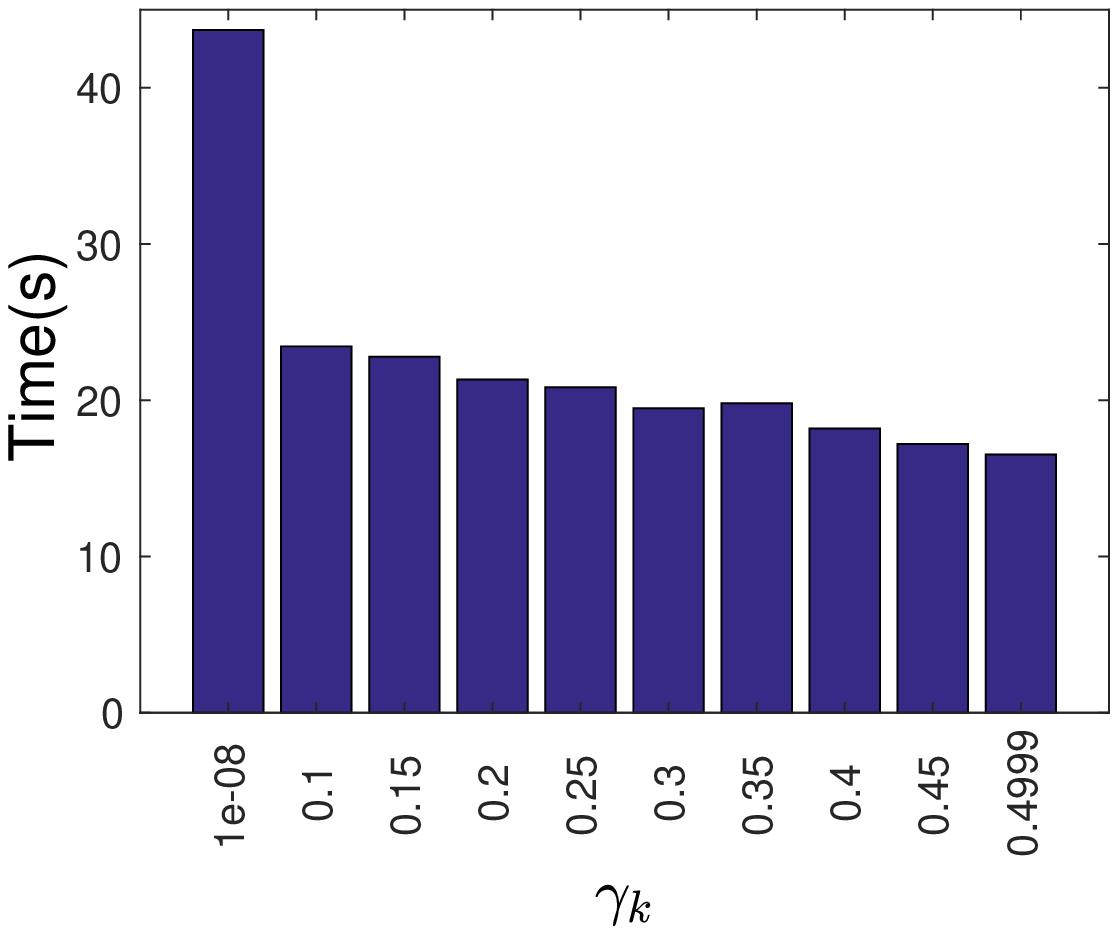} \\
		(a) & (b) & (c)\\
	\end{tabular}
	\caption{Results for 10 instances of Problem II using $n=800$, $m=1000$, and $c=100$. Average number of: (a) iterations; (b) computed eigenpairs; (c)  CPU time in seconds needed to reach the solution for different choices of $\gamma_k$.}
	\label{Spec}
\end{figure}

\subsection{Influence of the line search scheme}

The following experiments compare the performance of the SPG method with different strategies for computing the step sizes. We considered the Armijo, the Average-type, and the Max-type line searches discussed in Section~\ref{Sec:SGM}. 
Based on our numerical experience, we employed the fixed value $\eta_k=0.85$ for the Average-type line search and $M=5$ for the Max-type line search. 
According to the results of the previous section, we used the fixed forcing parameters $\zeta_k=0.8$ and $\gamma_k=0.4999$ to compute inexact projections for Problems~I and II, respectively.

We randomly generated 100 instances of each problem as described in Section~\ref{sec:forcing}. The dimension of the problems and the parameter $c$ in \eqref{objfun} were also taken arbitrarily. For Problem~I, we choose $100\leq n \leq 800$ and $10\leq c \leq 50$, whereas for Problem~II, we choose $10\leq n \leq 200$ and $100\leq c \leq 1000$. In both cases, we set $m=2n$.
We compare the strategies with respect to the number of function evaluations, the number of (outer) iterations, the total computational effort to calculate projections (measured by the number of Dykstra’s iterations and computed eigenpairs for Problems~I and II, respectively), and the CPU time. The results are shown in Figures~\ref{ppSDD} and \ref{ppSpec} for Problems~I and II, respectively, using performance profiles \cite{dolan2002benchmarking}.

For Problem~I, with regard to the number of function evaluations, the SPG method with the Average-type line search was the most efficient among the tested strategies.
In a somewhat surprising way, in this set of test problems, the Armijo strategy was better than the Max-type line search, see Figure~\ref{ppSDD}(a).
On the other hand, as can be seen in Figure~\ref{ppSDD}(b), the Armijo strategy required fewer iterations than the non-monotonous strategies.
As expected, this was reflected in the number of Dykstra’s iterations and the CPU time, see Figures~\ref{ppSDD}(c)--(d).
We can conclude that, with respect to the last two criteria, the Armijo and Average-type strategies had similar and superior performances to the Max-type strategy.

Now, concerning Problem~II, Figure~\ref{ppSpec} shows that the non-monotonous strategies outperformed the Armijo strategy in all the comparative criteria considered.
Again,  the Average-type strategy seems to be superior to the Max-type strategy.

\begin{figure}[H]\centering
	\begin{tabular}{cccc}
		\includegraphics[scale=\myscale]{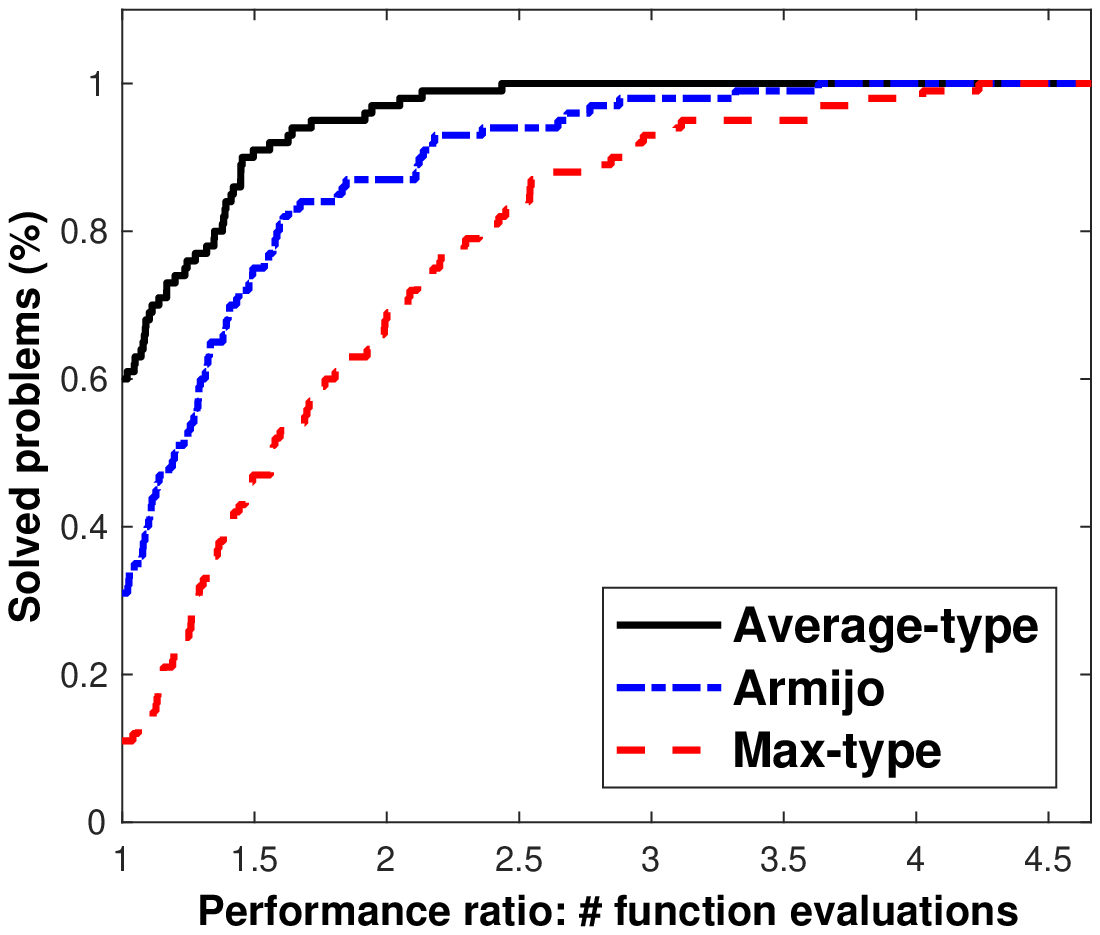} &\includegraphics[scale=\myscale]{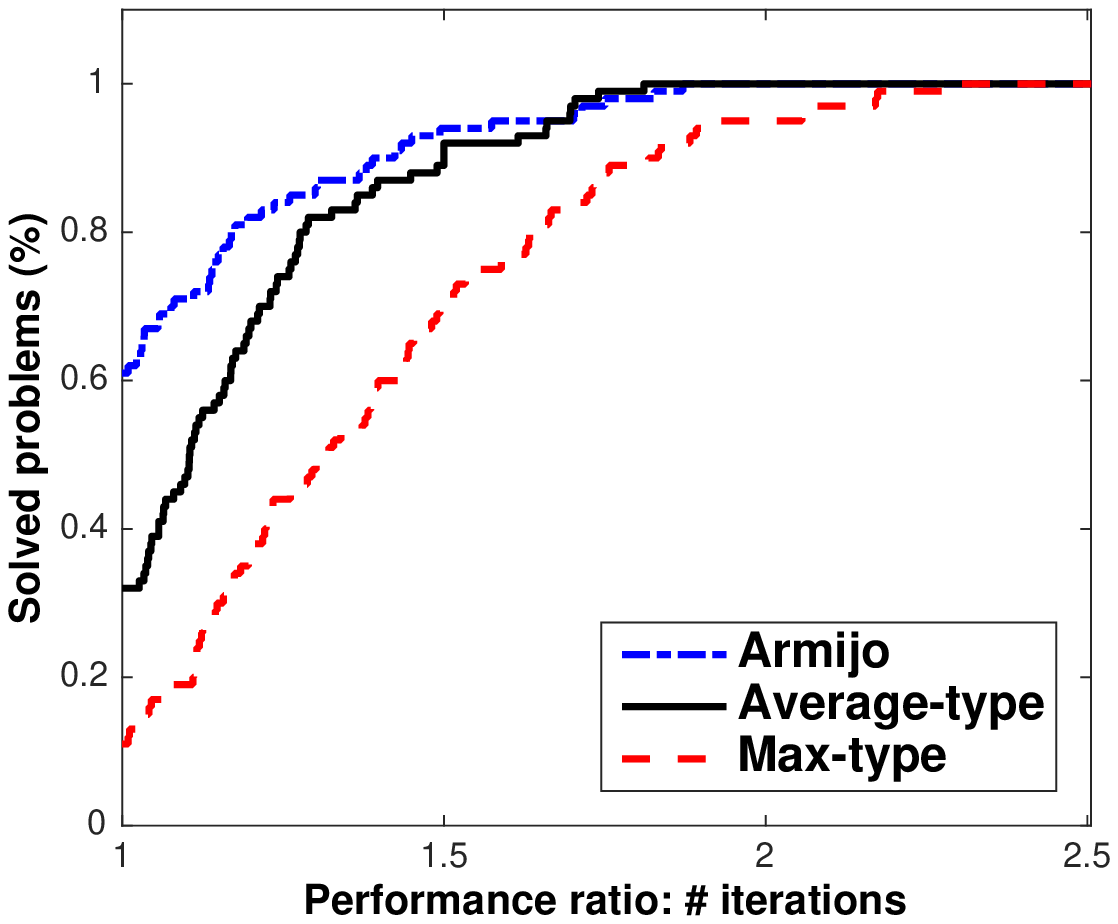} & \includegraphics[scale=\myscale]{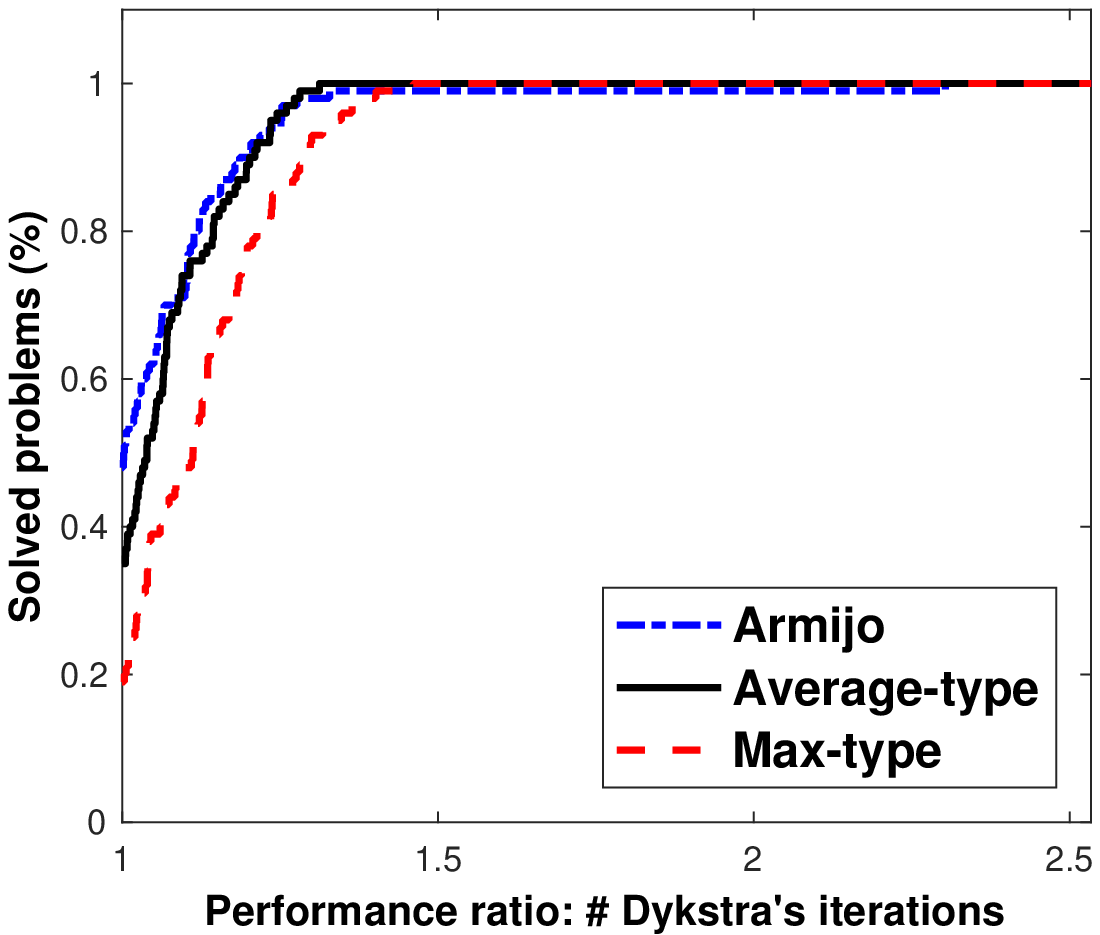} & \includegraphics[scale=\myscale]{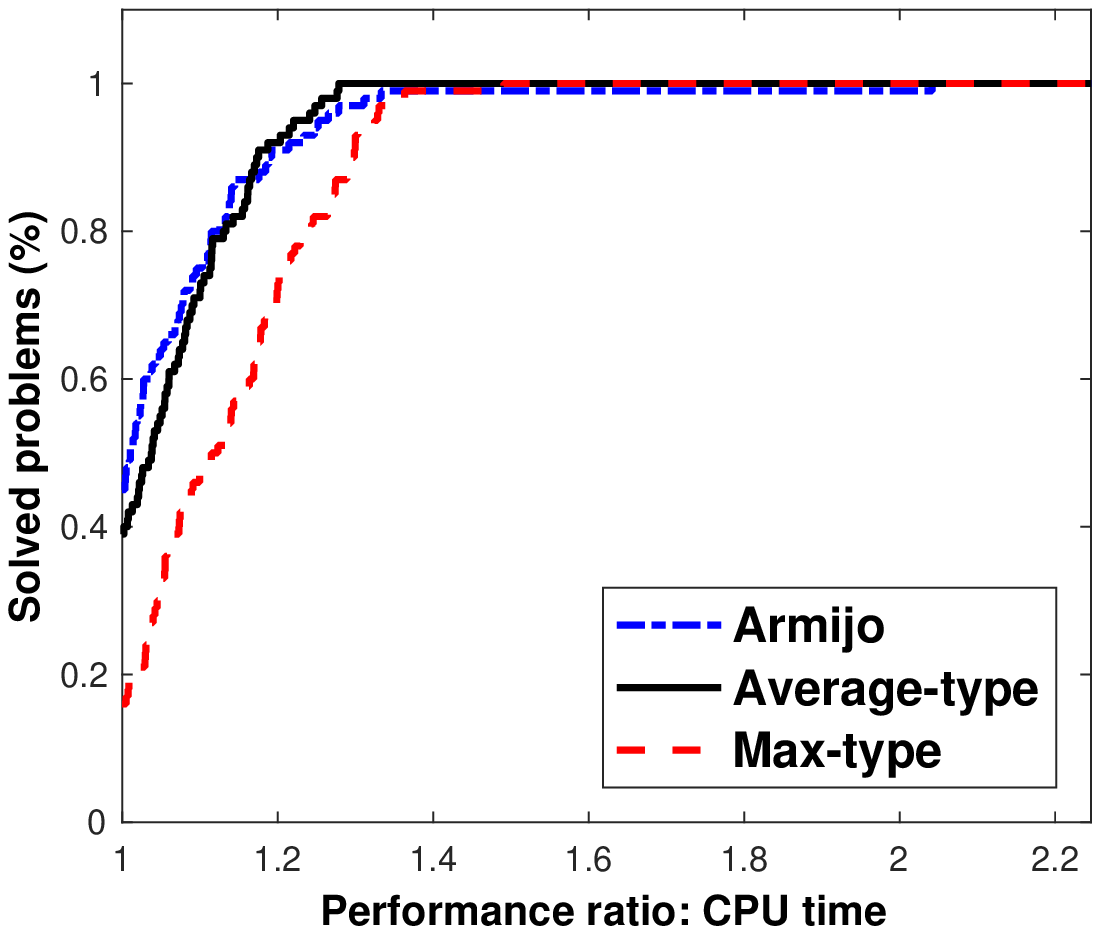} \\
		(a) & (b) & (c) & (d)\\
	\end{tabular}
	\caption{Performance profiles for Problem~I considering the SPG method with the Armijo, the Average-type, and the Max-type line searches strategies using as performance measurement: (a) number of function evaluations; (b) number of (outer) iterations; (c) number of Dykstra’s iterations; (d) CPU time.}
	\label{ppSDD}
\end{figure}

\begin{figure}[H]\centering
	\begin{tabular}{cccc}
		\includegraphics[scale=\myscale]{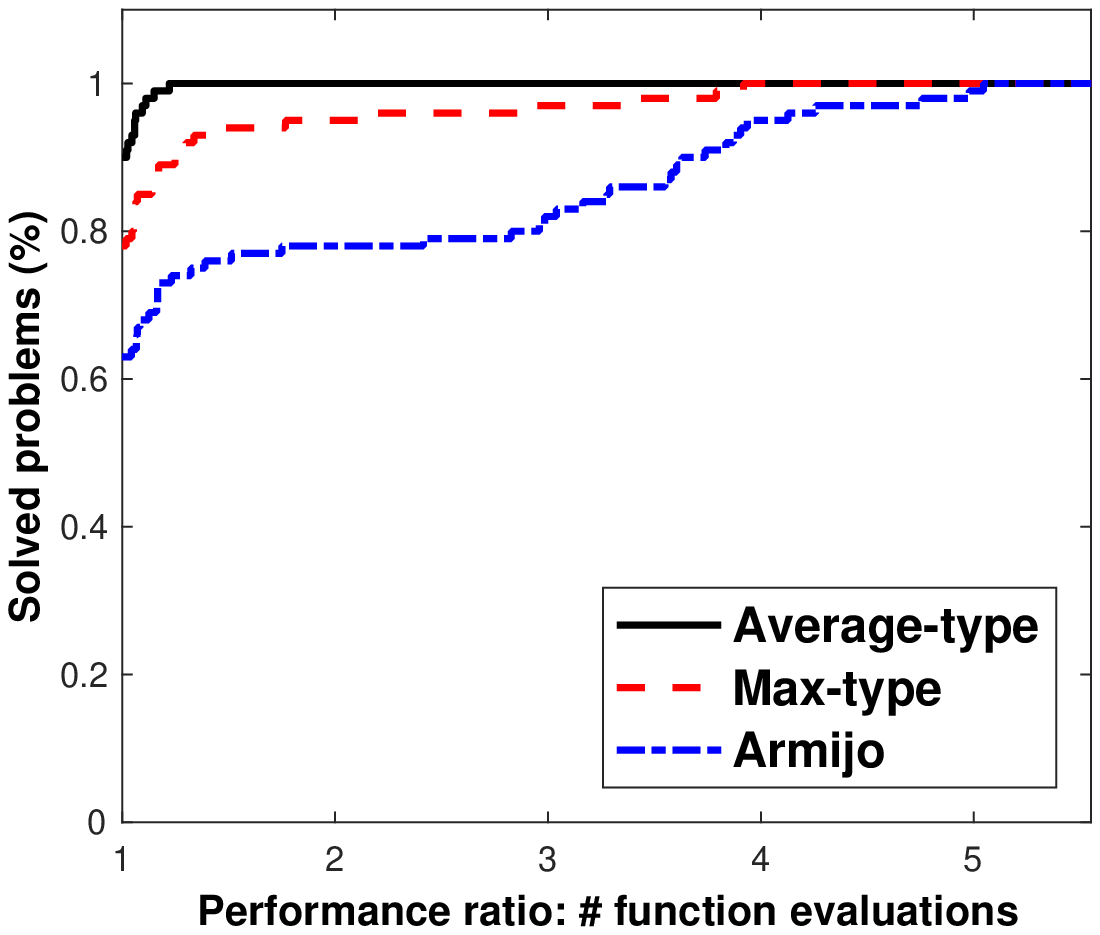}&\includegraphics[scale=\myscale]{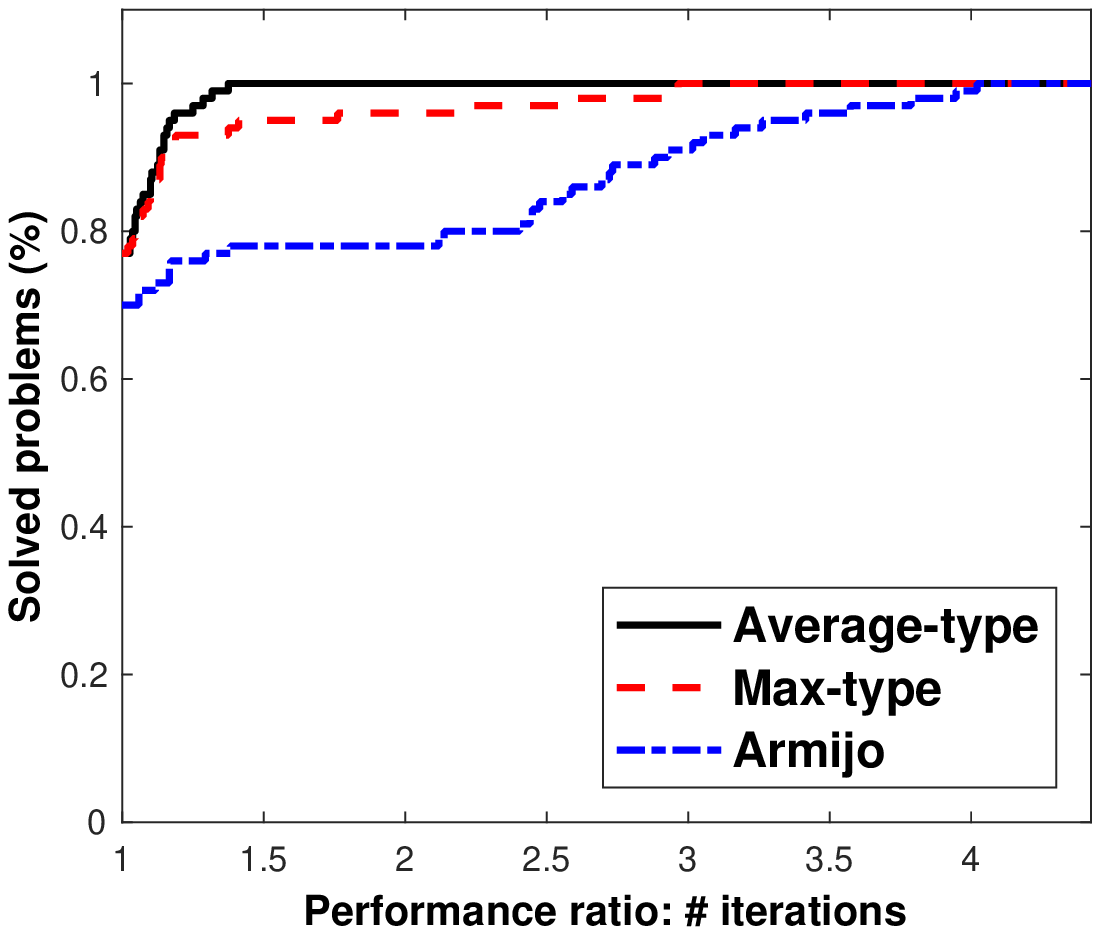} & \includegraphics[scale=\myscale]{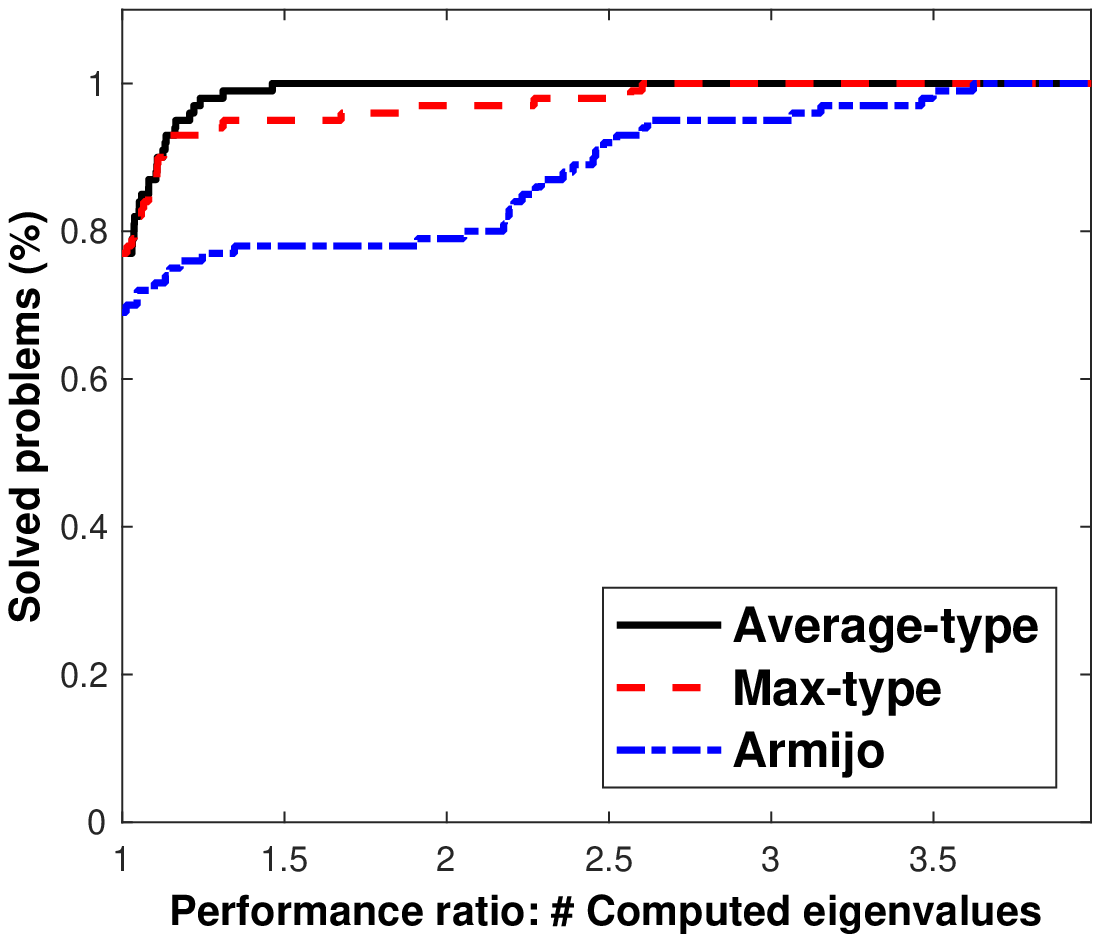} & \includegraphics[scale=\myscale]{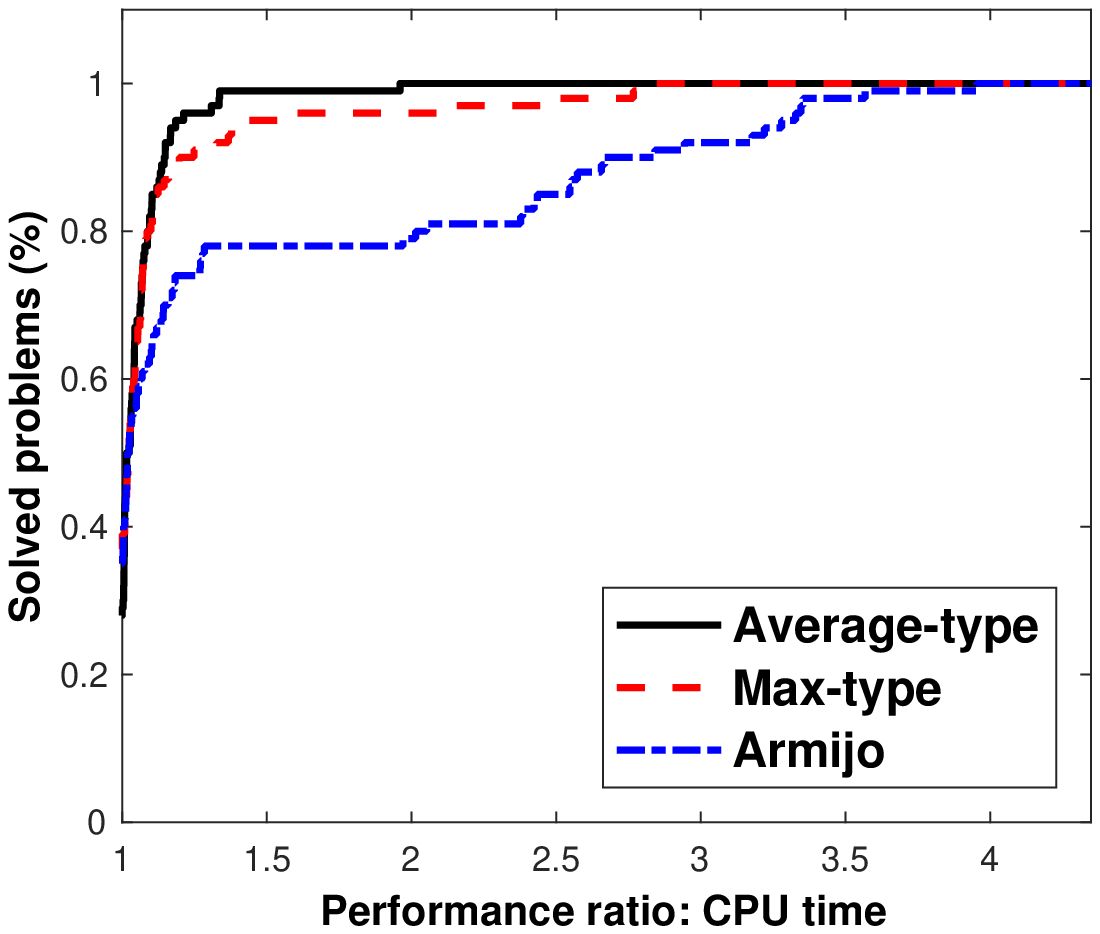} \\
		(a) & (b) & (c) & (d)\\
	\end{tabular}
	\caption{Performance profiles for Problem~II considering the SPG method with the Armijo, the Average-type, and the Max-type line searches strategies using as performance measurement: (a) number of function evaluations; (b) number of (outer) iterations; (c) number of computed eigenpairs; (d) CPU time.}
	\label{ppSpec}
\end{figure}

From all the above experiments, we conclude that the non-monotone line searches tend to require fewer objective function evaluations.
However, this does not necessarily mean computational savings, since there may be an increase in the number of iterations.
In this case, optimal efficiency of the algorithm comes from a compromise between those two conflicting tendencies.
Overall, the use of non-monotonous line search techniques is mainly justified when the computational effort of an iteration is associated with the cost of evaluating the objective function.

\section{Conclusions} \label{Sec:Conclusions}
In this paper, we study the SGP method  to solve   constrained convex optimization problems employing  inexact projections onto the feasible set and a general non-monotone  line search. We expect that this paper will contribute to the development of research in this field, mainly to solve large-scale problems when the computational effort of an iteration is associated with the projections onto the feasible set and  the cost of evaluating the objective function. Indeed, the idea of using the inexactness in the projection as well as in the line search,   instead of the exact ones, is particularly interesting from a computational point of view. In particular,   it is noteworthy that the Frank-Wolfe method  has a low computational cost per iteration  resulting in high computational performance in different classes of compact sets, see \cite{GarberHazan2015, Jaggi2013}.  An issue that deserves attention is the search for new efficient methods such as the Frank-Wolfe's and Dykstra's  methods that generate inexact projections.


\end{document}